\newtheorem{theorem}{Theorem}
\newtheorem{proposition}[theorem]{Proposition}
\newtheorem{lemma}[theorem]{Lemma}
\newtheorem{corollary}[theorem]{Corollary}
\theoremstyle{definition}
\newtheorem{remark}[theorem]{Remark}
\newtheorem{example}[theorem]{Example}
\newtheorem{definition}[theorem]{Definition}
\newtheorem{question}[theorem]{Question}
\newcommand{\tref}[1]{Theorem~\ref{t.#1}}
\newcommand{\lref}[1]{Lemma~\ref{l.#1}}
\newcommand{\cref}[1]{Corollary~\ref{c.#1}}
\newcommand{\sref}[1]{Section~\ref{s.#1}}
\newcommand{\eref}[1]{(\ref{e.#1})}
\numberwithin{equation}{section}
\numberwithin{theorem}{section}
\newcommand{\Z}{\mathbb{Z}}
\newcommand{\N}{\mathbb{N}}
\newcommand{\R}{\mathbb{R}}
\newcommand{\E}{\mathbb{E}}
\renewcommand{\P}{\mathbb{P}}
\newcommand{\F}{\mathcal{F}}
\newcommand{\Zd}{\mathbb{Z}^d}
\newcommand{\Rd}{\mathbb{R}^d}
\newcommand{\Sy}{\mathbb{S}^d}
\newcommand{\ep}{\varepsilon}
\newcommand{\Pu}{\mathcal{P}}
\newcommand{\parens}[1]{\left( #1 \right)}
\newcommand{\bracks}[1]{\left[ #1 \right]}
\newcommand{\braces}[1]{\left\{ #1 \right\}}
\DeclareMathOperator{\dist}{dist}
\DeclareMathOperator{\var}{var}
\DeclareMathOperator{\cov}{cov}
\DeclareMathOperator{\diam}{diam}
\DeclareMathOperator{\tr}{tr}
\renewcommand{\bar}{\overline}
\renewcommand{\tilde}{\widetilde}
\begin{document}

\title[Quantitative stochastic homogenization]{Quantitative stochastic homogenization of elliptic equations in nondivergence form}

\begin{abstract}
We introduce a new method for studying stochastic homogenization of elliptic equations in nondivergence form. The main application is an algebraic error estimate, asserting that deviations from the homogenized limit are at most proportional to a power of the microscopic length scale, assuming a finite range of dependence. The results are new even for linear equations. The arguments rely on a new geometric quantity which is controlled in part by adapting elements of the regularity theory for the Monge-Amp\`ere equation.

\medskip

\emph{Compared to the published version of this article [\emph{Arch Ration. Mech. Anal.}, \textbf{214}, 867--911 (2014)], this revised version corrects some minor mistakes that were brought to our attention after publication. See Section~\ref{s.changes} below for a description of the changes.}
\end{abstract}

\author[S. N. Armstrong]{Scott N. Armstrong}
\address{Ceremade (UMR CNRS 7534), Universit\'e Paris-Dauphine, Paris, France}
\email{armstrong@ceremade.dauphine.fr}

\author[C. K. Smart]{Charles K. Smart}
\address{Department of Mathematics, Massachusetts Institute of
Technology, Cambridge, MA 02139}
\email{smart@math.mit.edu}

\keywords{stochastic homogenization, regularity, fully nonlinear equation, error estimate, Monge-Amp\`ere equation}
\subjclass[2010]{35B27, 35J60, 60F17}
\date{\today}

\maketitle

\section{Introduction}


\subsection{Motivation and informal statements of the main results}

This paper is concerned with uniformly elliptic equations of the form
\begin{equation}\label{e.pde}
F\left(D^2u^\ep(x) ,\frac x\ep\right) = f(x) \quad \mbox{in} \ U \subseteq \Rd,
\end{equation}
where~$F:\Sy\times \Rd\to \R$ is a stationary-ergodic random field, $\Sy$ is the set of $d$-by-$d$ real symmetric matrices and $D^2\phi \in\Sy$ denotes the Hessian matrix of a function $\phi$.

The most important special case of~\eqref{e.pde} is the one in which~$F$ is linear in its first variable and it can be written as 
\begin{equation}\label{e.lin}
-\sum_{i,j=1}^d a_{ij}\left(\frac x\ep \right) \partial_{i}\partial_j u^{\ep}(x) = f(x),
\end{equation}
(for a positive definite matrix~$(a_{ij}(\cdot))_{i,j=1}^d$), which is the master equation governing the behavior of a diffusion in the heterogeneous environment with~covariance matrix~$\sqrt{2a^{ij}}$. General nonlinear equations of the form~\eqref{e.pde} include the Bellman--Isaacs equations, which arise for example in the theory of stochastic optimal control and two-player, zero-sum stochastic differential games.

\smallskip

The essential qualitative result of stochastic homogenization is that the heterogeneous equation~\eqref{e.pde} may be replaced by an \emph{averaged} or \emph{homogenized} one, at least for small $\ep> 0$. More precisely, subject to appropriate boundary conditions, the solutions~$u^\ep$ converge locally uniformly and with probability one, as~$\ep\to 0$, to the  solution~$u$ of the deterministic equation
\begin{equation}\label{e.hom}
\overline F(D^2u) = f(x).
\end{equation}
For the linear equation~\eqref{e.lin}, this was proved more than thirty years ago by Papanicolaou and Varadhan~\cite{PV2} and Yurinski{\u\i}~\cite{Y1}, independently. Their arguments relied on linear duality by considering solutions of the adjoint equation and passing to weak limits to obtain invariant measures. In probabilistic terms, this is often called \emph{the method of the environment from the point of view of the particle} and the homogenization result is formulated as \emph{an invariance principle for a diffusion in a random environment}. The arguments of~\cite{PV2,Y1} can not be generalized to the nonlinear setting, and the general qualitative picture was not completed for another twenty years until Caffarelli, Souganidis and Wang~\cite{CSW} introduced the obstacle problem method, which compares solutions of~\eqref{e.pde} to those of an auxiliary obstacle problem and uses the monotonicity of a certain quantity associated with the latter (the Lebesgue measure of the contact set) to obtain convergence.

\smallskip

From both the theoretical and practical points of view, it is desirable to quantify \emph{just how small}~$\ep$ needs to be in order for~\eqref{e.hom} to be a good approximation of~\eqref{e.pde}.  That is, one would like to describe the distribution of the random field~$u^\ep-u$ and in particular provide upper bounds for its typical size. It is also important to estimate and identify efficient schemes for computing the effective equation $\overline F$. This is the general program of \emph{quantitative} stochastic homogenization. 

\smallskip

The quantitative picture for the stochastic homogenization of nondivergence form equations is incomplete, even for linear equations. In terms of convergence rates to the homogenized limit, it has long been expected that, under an appropriate condition on the random medium quantifying the ergodicity assumption, the typical size of $u^\ep - u$ should be a power of $\ep$, with an estimate like
\begin{equation}\label{e.qwts}
\P \left[ \sup_{x\in U} \left|u^\ep(x) - u(x)\right| \geq C\ep^\alpha \right] \leq C \ep^\beta
\end{equation}
for exponents $\alpha, \beta > 0$ which depend only on the dimension and the ellipticity constant and $C>0$ which may depend also on boundary conditions and other given data. The most natural and important case to address is for random environments satisfying a \emph{finite range of dependence}. The precise definition is given in Subsection~\ref{ssMR} below, but this essentially means that, for some given characteristic length scale~$L>0$, the random variables $F(M,x)$ and $F(M,y)$ are independent whenever $|x-y| \geq L$.

\smallskip

In this paper, we resolve this question by proving~\eqref{e.qwts} in the general nonlinear setting under the assumption of a finite range of dependence. In fact, we prove the stronger estimate: for every $p<d$, there exists $\alpha(p,d,\Lambda)>0$ such that
\begin{equation}\label{e.exp}
\P \left[ \sup_{x\in U} \left|u^\ep(x) - u(x)\right| \geq C\ep^\alpha \right] \leq C \exp\left(- \ep^{-p} \right).
\end{equation}
See Theorem~\ref{t.full} below for the precise statement.  We also prove~\eqref{e.qwts} under appropriate mixing conditions, see Section~\ref{mixing}.

\smallskip

The most significant previous contribution to the theory of quantitative stochastic homogenization of nondivergence form equations is that of~Yurinski{\u\i}~\cite{Y0,Y2}. He proved~\eqref{e.qwts} for the linear equation~\eqref{e.lin} in dimensions five and larger. He also obtained an algebraic error estimate in dimensions three and four in the regime of \emph{small ellipticity contrast} (that is, under the additional and quite restrictive assumption that the diffusion matrix is a very small perturbation of the Laplacian). In dimension two, Yurinski{\u\i}'s arguments give a much slower, logarithmic rate of convergence, even under this assumption. For nonlinear equations, the only quantitative result is due to Caffarelli and Souganidis~\cite{CS}. They proved a logarithmic convergence rate by quantifying the obstacle method proof of convergence from~\cite{CSW}. Precisely, the estimate they get is
\begin{equation}\label{e.obsrate}
\P \left[  \sup_{x\in U} \left|u^\ep(x) - u(x)\right| \geq C\exp \left( -c \sqrt{|\log \ep|} \right) \right]  \leq C  \exp\left( -c \sqrt{|\log\ep|} \right)
\end{equation}
for a constant $c >0$ which depends on dimension and ellipticity and $C>0$ which may depend in addition on the other given data. Since~\cite{CS} seems to optimally quantify the convergence argument of~\cite{CSW}, obtaining the conjectured error estimate~\eqref{e.qwts} requires a different approach to the problem.

\smallskip

In this paper, we introduce a new strategy for studying the homogenization of nondivergence form equations. Since it gives only the second proof of qualitative homogenization for~\eqref{e.pde}, it is of interest beyond the proof of~\eqref{e.exp}. Rather than constraining the solutions via the introduction of an obstacle and measuring the extent to which the solutions feel the constraint, as in~\cite{CS,CSW}, we allow the solutions to be free and measure the curvature of their graphs. This curvature is captured by a new monotone quantity, denoted below by~$\mu(U,F)$, which measures how many planes may touch a supersolution of $F(D^2u,x) \geq 0$ from below in $U$. At the core of our approach are the results in Section~\ref{sec.SC}, which assert that solutions which maximize this curvature, in this sense, must be uniformly convex (in proportion to the curvature). The proof of this uses geometric ideas inspired by the regularity theory of the Monge-Amp\`ere equation. This connection arises naturally from the quantity~$\mu$ itself: see Lemma~\ref{l.mooney} and the comments preceding it, as well as the discussion in Subsection~\ref{ss.muintro}.

\smallskip

Most of the work for proof the main error estimates lies in obtaining an appropriate  estimate on the decay of $\mu(Q,F)$ as the cube $Q$ becomes large. This is stated in Theorem~\ref{t.mudecay}, below, and the focus of most of the paper. Once we have obtained this estimate, the main result follows by showing that $\mu$ controls the difference $u^\ep-u$ of the solutions of the corresponding Dirichlet problems. This is accomplished through a relatively straightforward comparison argument quantified by the regularity theory.

\subsection{Hypotheses and review of qualitative results}

Before stating the main result, we introduce the notation, give the precise assumptions, and review the qualitative theory.

\smallskip

Throughout the paper, we work in~$\Rd$ in dimension~$d\geq 2$ and all differential equations and inequalities are to be interpreted in the viscosity sense (c.f.~\cite{CC,CIL}). The set of real $d$-by-$d$ symmetric matrices is denoted by $\Sy$. If $A\in \Sy$, then $|A|$ denotes the square root of the largest eigenvalue of $A^2$. We write $A\geq 0$ if $A$ has nonnegative eigenvalues. Recall that the Pucci extremal operators with ellipticity $\Lambda >1$ are defined for each $A\in \Sy$ by:
\begin{equation*} \label{}
\Pu^+_{1,\Lambda}(A) = -\tr(A_+) + \Lambda \tr(A_-)\quad \mbox{and} \quad \Pu^-_{1,\Lambda}(A) = -\Lambda \tr(A_+) +  \tr(A_-)
\end{equation*}
Here $\tr (A)$ denotes the trace of $A$, and $A_+,A_-\in \Sy$ are the uniquely defined by the conditions: $A=A_+-A_-$, $A_+A_- = 0$ and $A_+,A_-\geq 0$. The identity matrix is denoted by~$I$.

\begin{definition}
\label{d.Omega}
Given $\Lambda > 1$, we take $\Omega$ to be the set of ``all uniformly elliptic equations with ellipticity~$\Lambda$." Precisely, we consider functions $$F:\Sy\times \Rd \to \R$$ which satisfy the following conditions: for every $A,B\in \Sy$ and $x\in \Rd$, 
\begin{equation}\label{e.Fue}
\Pu_{1,\Lambda}^-(A-B) \leq F(A,x)-F(B,x) \leq \Pu_{1,\Lambda}^+(A-B) \quad \mbox{(uniform ellipticity),}
\end{equation}
there exist constants $C>0$ and $\frac12 < \theta \leq 1$ such that, for all $A\in \Sy$ and $x,y\in \Rd$,
\begin{equation}\label{e.Fsr}
|F(A,x)-F(A,y)| \leq  C\left(1+|A|\right) |x-y|^{\theta} \quad \mbox{(spatial regularity)}
\end{equation}
and
\begin{equation}\label{e.Fbdd}
\sup_{x\in \Rd} |F(0,x)| < +\infty \quad \mbox{(boundedness),}
\end{equation}
and we define 
\begin{equation*}\label{}
\Omega:=\Omega(\Lambda):= \left\{ F:\Sy\times \Rd\to \R \ \ \mbox{satisfies~\eqref{e.Fue}, \eqref{e.Fsr} and~\eqref{e.Fbdd}} \right\}.
\end{equation*}
We endow~$\Omega$ with the $\sigma$--algebra~$\F$, given by
\begin{equation*}\label{}
\F:= \mbox{$\sigma$--algebra on $\Omega$ generated by the family $\left\{ F \mapsto F(A,x)\,:\, (A,x) \in \Sy\times \Rd \right\}$}.
\end{equation*}
We denote the set of constant-coefficient operator by $\overline \Omega:=\overline\Omega(\Lambda)\subseteq \Omega$, that is, the set of~$F$'s which do not depend on the second variable. 
\end{definition}
We remark that the purpose of the hypothesis~\eqref{e.Fsr} is to ensure that the comparison principle holds (c.f~\cite{CIL}). It is irrelevant how small $\theta -\frac12$ may be or how large $C$ is in this inequality in the sense that none of our quantitative estimates depend on these parameters. 

The \emph{random environment} is modeled by a probability measure~$\P$ on~$(\Omega,\F)$, which is assumed to have the following properties. First, there exists~$K_0>0$ such that~$\P$ is supported on the set of~$F$ for which~$|F(0,\cdot)|$ is uniformly bounded by~$K_0$; i.e.,
\begin{equation}\label{e.Fubb}
\P \left[ \ \sup_{x\in \Rd} \left| F(0,x) \right| \leq K_0 \right] = 1 \qquad \mbox{(uniform boundedness).}
\end{equation}
Next, $\P$ is assumed to be \emph{stationary}, i.e., it is invariant under  translations. Denote the action of translation by $T: \Rd \times \Omega\to \Omega$,
\begin{equation*}
T(y,F)(A,x) := (T_yF)(A,x) := F(A,x+y),
\end{equation*}
and extend this to $\F$ by setting $T_yE:= \{ T_yF\,:\, F\in E\}$ for $E\in \F$. Stationarity is the assumption that
\begin{equation}\label{e.statRd}
\forall E\in\F,\, \forall y\in \Rd: \qquad \P\left[ T_y E \right] = \P\left[ E \right] \qquad \mbox{(stationarity).}
\end{equation}
For the most general qualitative theory of stochastic homogenization, the natural condition to impose on $\P$, in addition to stationarity, is \emph{ergodicity}: this means that the only events which are translation invariant are those of null or full probability. The precise hypothesis is that, for every $E\in \F$,
\begin{equation}\label{e.erg}
E=\bigcap_{y\in \Rd} T_yE  \qquad \mbox{implies that} \qquad \P\left[E\right] \in \{ 0 ,1 \} \qquad \mbox{(ergodicity).}
\end{equation}
We denote by $\E$ the expectation with respect to $\P$.

\smallskip

We next recall the statement of qualitative stochastic homogenization, presented in terms of solutions of the Poisson-Dirichlet problem for $F$.
\addtocounter{theorem}{-1}

\begin{theorem}[Linear case:~\cite{PV2,Y1}, full generality:~\cite{CSW}]
\label{t.csw}
Fix~$\Lambda > 1$ and~$K_0>0$, and assume that~$\P$ is a probability measure on~$(\Omega(\Lambda),\F)$ satisfying~\eqref{e.Fubb},~\eqref{e.statRd} and~\eqref{e.erg}.
Then there exist $\overline F \in \overline\Omega(\Lambda)$ and~$\Omega_0 \in \F$ with $\P\left[ \Omega_0 \right] =1$ such that the following holds: for every $F\in \Omega_0$, bounded smooth domain $U\subseteq \Rd$, $g \in C(\partial U)$, and $f \in C(U) \cap L^\infty(U)$, the unique solution $u^\ep(\cdot,F) \in C(\overline U)$, for $\ep > 0$, of the Dirichlet problem
\begin{equation}\label{e.cswuep}
\left\{ \begin{aligned}
& F\left(D^2u^\ep,\frac x\ep\right) = f & \mbox{in} & \ U,\\
& u^\ep = g & \mbox{on} & \ \partial U,
\end{aligned} \right.
\end{equation}
satisfies
\begin{equation*}\label{}
\lim_{\ep \to 0} \sup_{x\in U} \big| u^\ep(x,F) - u(x) \big| = 0,
\end{equation*}
where $u \in C(\overline U)$ denotes the unique solution of
\begin{equation}\label{e.cswu}
\left\{ \begin{aligned}
& \overline F(D^2u) = f & \mbox{in} & \ U,\\
& u = g & \mbox{on} & \ \partial U.
\end{aligned} \right.
\end{equation}
\end{theorem}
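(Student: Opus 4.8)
The plan is to use the monotone quantity $\mu$ in place of the contact-set measure that drives the obstacle-problem proof of~\cite{CSW}, and to establish homogenization in three movements: first construct $\overline F$ from the large-scale behaviour of $\mu$, then show that solutions of the calibrated heterogeneous equation with affine data flatten out as $\ep\to 0$, and finally pass to the limit by a comparison argument. (The statement also follows a fortiori from the quantitative estimate~\tref{full}, but the self-contained route below is what makes this the ``second'' proof of qualitative homogenization.)

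\emph{Step 1: construction of $\overline F$.} For each $M\in\Sy$ and each constant $c\in\R$, I would consider the operator $(A,x)\mapsto F(A+M,x)-c$ and track $\mu$ evaluated at it over the cubes $Q_n:=(-3^n,3^n)^d$. The monotonicity of $\mu$ together with stationarity~\eqref{e.statRd} produces, along this scale, an almost-sure limit, and ergodicity~\eqref{e.erg} forces this limit to be a deterministic constant depending only on $(M,c)$ (a subadditive ergodic theorem, or monotone convergence combined with the $0$--$1$ law, applies). One then sets $\overline F(M)$ to be the threshold value of $c$ separating the regime in which this limit vanishes from the one in which it does not. The comparison principle applied on each $Q_n$ shows that $\overline F$ inherits the Pucci bounds~\eqref{e.Fue} and depends monotonically and Lipschitz-continuously on $M$; combining this continuity with a countable dense set of matrices $M$ yields a single event $\Omega_0\in\F$ of full probability off which $\overline F$ is well defined for every $M$ at once.

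\emph{Step 2: flatness of calibrated solutions.} The crucial step — and the main obstacle — is to show that, on $\Omega_0$, the solution $w^\ep$ of $F(D^2w^\ep, x/\ep)=\overline F(M)$ in a fixed ball $B$ with $w^\ep=\ell_M$ on $\partial B$ (where $\ell_M$ is the quadratic with Hessian $M$) converges uniformly to $\ell_M$ as $\ep\to 0$. This is exactly the assertion that the medium does not bend such solutions at large scales, and it is where the results of Section~\ref{sec.SC} and~\lref{mooney} enter: if $w^\ep$ failed to flatten, rescaling would produce a supersolution of the calibrated equation carrying anomalously large $\mu$-mass on a large cube, contradicting the calibration of $\overline F$ in Step~1; quantitatively, this is precisely the content of the decay estimate~\tref{mudecay}. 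Turning the curvature control into a genuine modulus of continuity for $w^\ep$ will require the Alexandrov--Bakelman--Pucci inequality and the Krylov--Safonov Harnack inequality, together with some care to pass from cubes to balls and to make the estimate uniform as $M$ ranges over a compact set of Hessians, so that it may be applied below with $M=D^2\phi(x_0)$.

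\emph{Step 3: compactness and passage to the limit.} The family $\{u^\ep\}$ is precompact in $C(\overline U)$: it is uniformly bounded by the maximum principle and~\eqref{e.Fubb}, uniformly $C^\alpha$ on compact subsets of $U$ by the Krylov--Safonov estimate, and equicontinuous up to $\partial U$ because the smoothness of $U$ supplies uniform barriers. Let $\bar u$ be any subsequential uniform limit; I would show it solves~\eqref{e.cswu} in the viscosity sense by Evans' perturbed test function method. If a smooth $\phi$ touches $\bar u$ strictly from above at an interior point $x_0$, replace $\phi$ near $x_0$ by $\phi$ corrected at scale $\ep$ using the flattening solution of Step~2 with $M=D^2\phi(x_0)$; Step~2 makes the correction $o(1)$ uniformly, so the corrected function still touches $u^\ep$ from above at a point near $x_0$, and reading the equation there and letting $\ep\to 0$ yields $\overline F(D^2\phi(x_0))\le f(x_0)$. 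The supersolution inequality follows symmetrically, the boundary condition passes to the limit via the barriers, and the comparison principle for~\eqref{e.cswu} — valid thanks to~\eqref{e.Fsr} — identifies $\bar u$ with the unique solution $u$, so that the whole family $\{u^\ep\}$ converges to $u$.
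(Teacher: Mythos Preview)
The paper does not give an explicit proof of this theorem; it is cited as a prior result of~\cite{PV2,Y1,CSW}, with only the remark that ``a new proof of Theorem~\ref{t.csw} can also be extracted from the arguments in this paper.'' Your proposal is an attempt to write down that extraction, and the overall architecture---build $\overline F$ from $\mu$, show calibrated solutions flatten, pass to the limit---is the right one. But there are two genuine gaps.

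First, in Step~2 you invoke \tref{mudecay}, but that theorem is proved under the finite-range-of-dependence hypothesis~\eqref{e.frd}, not under mere ergodicity~\eqref{e.erg}; the concentration step \lref{concentrate} simply fails without independence. (For the same reason, your parenthetical claim that \tref{csw} follows a fortiori from \tref{full} is not correct: \tref{full} also assumes~\eqref{e.frd}.) The route under ergodicity alone is to observe that the subadditive ergodic theorem, applied via \lref{musub} and boundedness, already gives almost-sure and $L^2$ convergence of $\mu(Q_m,F+s)$ to a deterministic constant; this $L^2$ convergence forces the ratio of second moment to squared first moment to tend to~$1$, which is exactly the hypothesis~\eqref{e.contcth1}--\eqref{e.contcth2} of \lref{contract}. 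Then \lref{contract} (together with \lref{pushup} to rule out the degenerate case) bounds the balanced limit by $s^{2d}$ for every $s>0$, so it vanishes. In other words, under ergodicity the ergodic theorem \emph{replaces} \lref{concentrate} as the mechanism producing small variance, and the strict-convexity machinery of Section~\ref{sec.SC} via \lref{contract} does the rest.

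Second, your Step~1 defines $\overline F(M)$ as a one-sided threshold for $\mu$ alone, and $\mu_*$ never appears in the sketch. But the supersolution half of your Step~3 needs $\mu_*(Q_m,F_M-\overline F(M))\to 0$, and there is no a~priori reason the $\mu$-threshold and the $\mu_*$-threshold coincide---indeed, showing they do is precisely what \lref{contract} (via the comparison of $u$ and $u_*$ in its Step~2) accomplishes. The paper avoids this by defining $\overline F(M)$ not as a threshold but as the balancing constant $s(\P_M)$ of \lref{mubalance}, and then proving both balanced limits are zero. Your Step~3, using Evans' perturbed test function method rather than the direct comparison of \pref{snapgrid}, is a legitimate alternative for the final passage to the limit, though it is inherently soft and would not recover a rate.
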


The qualitative homogenization results stated in~\cite{CSW} are more general than what we have presented above, and include equations with lower-order dependence, mild coupling between the microscopic and macroscopic scales and results for time-dependent parabolic problems. The decision to state Theorem~\ref{t.csw} as well as the main result, Theorem~\ref{t.full}, in terms of the Poisson-Dirichlet problem and with less than full generality is not due to any limitations of our method: essentially all of the difficulty lies in proving this case and the desired extensions and generalizations are fairly straightforward to obtain. See \sref{remarks} for more discussion.

Although we focus on obtaining quantitative results and thus assume stronger hypotheses, a new proof of Theorem~\ref{t.csw} can also be extracted from the arguments in this paper.

\subsection{Statement of the main result}
\label{ssMR}

For quantitative results, it is necessary to add an assumption which quantifies the ergodicity of~$\P$. In this paper, we postulate that~$\P$ enjoys a \emph{finite range of dependence}.  Let us give the precise statement of this hypothesis. We first denote, for each Borel set $U\subseteq \Rd$,
\begin{equation}\label{e.FU}
\F(U):= \mbox{$\sigma$--algebra on $\Omega$ generated by} \ \ \{ F \mapsto F(A,x) \,:\, (A,x) \in \Sy\times U \}.
\end{equation}
Intuitively, we think of~$\mathcal F(U)$ as containing the information about the behavior of the random environment in~$U$. Note that $\F(U)\subseteq \F = \F({\Rd})$. The finite range of dependence condition is stated as follows: 
\begin{multline}\label{e.frd}
\mbox{for all Borel subsets} \ U,V \subseteq \Rd \ \ \mbox{such that} \ \dist(U,V) \geq 1, \\
\F(U) \   \mbox{and} \ \F(V)\ \ \mbox{are \ $\P$--independent}.
\end{multline}
Here $\dist(U,V) := \inf\{  |x-y|\,:\, x\in U, \ y\in V \}$ denotes the usual distance between subsets of $\Rd$. Note that~\eqref{e.frd} implies~\eqref{e.erg}.

In order to include important examples such as the random checkerboard, we relax the stationary hypotheses to the assumption that $\P$ is invariant under \emph{integer} translations. Instead of~\eqref{e.statRd}, we require 
\begin{equation}\label{e.stat}
\forall E\in\F,\, \forall z\in \Zd: \qquad \P\left[ T_z E \right] = \P\left[ E \right] \qquad \mbox{(stationarity).}
\end{equation}

\smallskip

We next present the main result.

\begin{theorem}
\label{t.full}
Suppose $\Lambda > 1$, $K_0>0$, and $\P$ is a probability measure on~$(\Omega(\Lambda),\F)$ satisfying~\eqref{e.Fubb},~\eqref{e.frd} and~\eqref{e.stat}. If $0< p < d$, $0< \ep \leq 1$, $U \subseteq \R^d$ is a bounded smooth domain, $g \in C^{0,1}(\partial U)$, $f \in C^{0,1}(U)$, $u^\ep(\cdot, F) \in C(\bar U)$ denotes the unique solution of \eref{cswuep}, and $u \in C(\bar U)$ denotes the unique solution of \eref{cswu}, then
\begin{equation*}
\P\bracks{ \sup_{x \in U} |u^\ep(x,F) - u(x)| \geq C\ep^\alpha} \leq C\exp(- \ep^{-p}),
\end{equation*}
for some exponent $\alpha>0$ satisfying $\alpha \geq c_0(d-p)$ for $c_0(d,\Lambda)>0$ and a constant $C > 0$ depending only on $d$, $\Lambda$, $K_0$, $U$, $p$, $\| g \|_{C^{0,1}(\partial U)}$, and $\| f \|_{C^{0,1}(U)}$.
\end{theorem}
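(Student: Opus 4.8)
The plan is to split the argument into two essentially independent parts, following the roadmap described in the introduction: first, a deterministic comparison estimate showing that $\sup_U |u^\ep - u|$ is controlled by the geometric quantity $\mu(Q,F)$ evaluated on a mesoscopic cube $Q$ of side length comparable to $\ep^{-1}$ times the macroscopic scale; second, the quantitative decay estimate for $\mu(Q,F)$ as $Q$ grows, namely \tref{mudecay}, which is where the probabilistic input (finite range of dependence, via a concentration/subadditivity argument) enters. Granting \tref{mudecay}, which gives that $\mu(Q,F) \lesssim (\diam Q)^{-\gamma}$ with exponential-type probability for some $\gamma(d,\Lambda,p) > 0$, the bulk of the remaining work is the comparison step.

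For the comparison step, I would first reduce to the case $f \equiv 0$ and $g$ affine on the scale of small cubes, using interior $C^{1,\alpha}$ and global Hölder/Lipschitz regularity for the Pucci extremal operators (Krylov--Safonov and boundary regularity for smooth $U$) to absorb the contributions of $f$ and the oscillation of $g$; the Lipschitz bounds on $f,g$ and smoothness of $U$ are exactly what is needed here, and this is why $C>0$ is allowed to depend on $\|f\|_{C^{0,1}}$, $\|g\|_{C^{0,1}}$ and $U$. Then, on each mesoscopic subcube $Q$ of side $\ep r_0$ (so that $Q/\ep$ has side $r_0$, large), one compares $u^\ep$ with affine functions: the point is that $\overline F$ is defined so that a plane with slope matching $\overline F$ is, up to error $\mu(Q/\ep, F)$, simultaneously an approximate subsolution and supersolution of $F(D^2\cdot, \cdot/\ep)=0$ on $Q$; pairing this with the comparison principle (valid thanks to \eqref{e.Fsr}) and the maximum principle on each subcube, and then summing the errors over a covering of $U$ by $\sim \ep^{-d}$ such subcubes while tracking the Harnack-type loss, yields $\sup_U|u^\ep-u| \lesssim \mu(Q/\ep,F)^{\delta}$ for some $\delta>0$, possibly after an interpolation between scales. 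Choosing $r_0 = \ep^{-1}$ (macroscopic scale normalized to $1$) and inserting the decay rate from \tref{mudecay} gives $\sup_U|u^\ep - u| \lesssim \ep^{\gamma\delta}$ off an event of probability $\leq C\exp(-\ep^{-p})$; setting $\alpha := \gamma\delta$ and noting $\gamma$ can be taken $\geq c(d,\Lambda)(d-p)$ from \tref{mudecay} yields the stated lower bound $\alpha \geq c_0(d-p)$.

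The main obstacle I anticipate is not in assembling these pieces but is precisely the content of \tref{mudecay} (which the excerpt lets me assume): showing that the new monotone quantity $\mu(Q,F)$ — measuring how many planes touch a supersolution of $F(D^2 u,x)\geq 0$ from below in $Q$ — actually decays at an algebraic rate as $Q$ becomes large. The difficulty there is twofold: a deterministic rigidity step (the Section~\ref{sec.SC} results, that solutions maximizing this curvature must be uniformly convex in proportion to the curvature, proved via Monge--Amp\`ere-type geometry, e.g.\ \lref{mooney}), and a stochastic step converting approximate monotonicity of $\E[\mu(Q_n,F)]$ along a geometric sequence of scales into a power rate, using the finite range of dependence \eqref{e.frd} together with stationarity \eqref{e.stat} to get the exponential concentration $\exp(-\ep^{-p})$ for every $p<d$ via an Azuma/Bernstein-type martingale estimate over the $\sim \ep^{-d}$ independent subcubes. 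Within the comparison step itself, the one delicate point is making the error from a single mesoscopic cube propagate additively rather than multiplicatively across scales; this is handled by the standard device of comparing $u^\ep$ to $u$ plus a small multiple of a fixed strictly convex/concave barrier and optimizing, so I expect it to be routine given the regularity theory, as the authors indicate.
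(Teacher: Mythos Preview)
Your overall architecture is right---one deterministic comparison step that reduces $\sup_U|u^\ep-u|$ to control on $\mu$, followed by the probabilistic decay \tref{mudecay}/\cref{mudeviation}---and you are correct to treat the latter as the hard, already-proved input. But the comparison step you sketch has a real gap.

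The issue is that $\mu(Q,F)$ alone cannot do the job. As observed in Subsection~\ref{ss.muintro}, $\mu(Q,F)=0$ whenever $F(0,\cdot)\geq 0$, so bounding $\mu(Q,F)$ says nothing about how well affine functions approximate solutions. What one actually needs is smallness of $\mu(Q,F_A-\overline F(A))$ for \emph{every} matrix $A$ in a suitable range, where $F_A(B,x)=F(A+B,x)$; this is precisely the content of the definition~\eqref{e.Fbar} of $\overline F$. Your description ``a plane with slope matching $\overline F$ is an approximate sub/supersolution'' is therefore not right: the relevant test functions are quadratics $x\mapsto \tfrac12 x\cdot Ax$, and the relevant $A$ is the (approximate) Hessian of the homogenized solution $u$ at a point near which $u-u^\ep$ is largest. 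Consequently the argument is not a covering-and-summing one over $\sim\ep^{-d}$ subcubes; it is a single touching argument: one locates the (near-)maximum of $u-u^\ep$, doubles variables with a penalty $\tfrac{1}{2\delta}|x-y|^2$ (Lemma~\ref{l.smear}), extracts a quadratic expansion $A$ of $u$ at the touching point, and concludes via Lemma~\ref{l.muabp} that $\mu(Q^*,F_{A^*}-\overline F(A^*))\gtrsim E^d$ on \emph{one} mesoscopic cube $Q^*$, with $A^*$ a grid-snapped version of $A$. This is Proposition~\ref{p.snapgrid}. The probabilistic conclusion then follows from a union bound over the finite grid of admissible $(A^*,y^*)$, applying \cref{mudeviation} to each pushforward $\P_{A^*}$.

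A second subtlety you pass over: solutions of $\overline F(D^2u)=f$ are in general only $C^{1,\alpha}$, not $C^2$, so you cannot simply read off a Hessian of $u$ at the touching point. The paper handles this with the $W^{2,\sigma}$/$W^{3,\sigma}$ estimates (Remark~\ref{r.outsource}), which give a good quadratic expansion on a set whose complement has small measure; Lemma~\ref{l.smear} guarantees that the set of touching points has positive measure, so one can intersect the two. Your proposed reduction via ``interior $C^{1,\alpha}$'' does not provide enough to carry out this step.
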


See Theorem~\ref{t.mixing} for an extension of Theorem~\ref{t.full} to probability measures satisfying a uniform mixing condition rather than~\eqref{e.frd}.

\subsection{Outline of the paper}

In the next section we introduce the new monotone quantity $\mu(U,F)$ and review some of its elementary properties. We also give the statements of Theorem~\ref{t.mudecay} and Corollary~\ref{c.mudeviation}, which provide a strong algebraic rate of decay for $\mu$. The next three sections are devoted to the proofs of these results. In Section~\ref{S.full}, we obtain~Theorem~\ref{t.full} from Theorem~\ref{t.mudecay} via a quantitative comparison argument. We conclude in Section~\ref{s.remarks} with some remarks and open problems.

\subsection{Description of changes from the published version}
\label{s.changes}

As explained below the abstract, this version of the paper is a revision made more than five years after publication. Three years ago, it was brought to 
our attention by Xiaoqin Guo that the proof of the second statement of Lemma~2.5 in the published version was incorrect. Since the fix, which consists in deleting the second part of Lemma~2.5 and using a weaker form of Lemma~2.8, is easy, we believed that an errata was not warranted and we were content to explain the fix privately. Since that time, there have been some further developments on the topic and some doubts concerning the arguments in the published version of this article have recently come to our attention. We have therefore come to the conclusion that a public posting is now needed, demonstrating that the original arguments stand, with minor corrections. The purpose of the current revision is to address this. We also have fixed a few other minor glitches, some of which were pointed out to us by Yves Capdeboscq.

\smallskip

The main changes from the published version are as follows. The statements and proofs of Lemmas~\ref{l.mugood} and~\ref{l.mubalance} have been modified. The first paragraph of the proof of Theorem~\ref{t.mudecay} has been reworded to reflect the change to Lemma~\ref{l.mubalance}. A minor glitch in Step~4 in the proof of Proposition~\ref{p.snapgrid} has been fixed. We have also added some explanations to some of the arguments which were a bit quick in the previous version (such as in Step~2 of the proof of Lemma~\ref{l.contract}). Several other scattered typos were corrected, such as in~\eqref{e.cquasi1} and~\eqref{e.cquasi2}.

\section{A new monotone quantity}
\label{S.muintro}

We introduce $\mu(U,F)$, derive some of its properties and give the statements of the main results concerning its decay (Theorem~\ref{t.mudecay} and Corollary~\ref{c.mudeviation}). 

\subsection{The definition of $\mu$}
\label{ss.muintro}
We begin with some notation. Given $F \in \Omega$ and a bounded open set $U \subseteq \R^d$, let
\begin{equation*}
S(U,F) := \left\{ u \in C(\overline U) \,:\, F(D^2 u, x) \geq 0 \ \ \mbox{in} \ U \right\},
\end{equation*}
denote the set of supersolutions of $F$ in $U$ that are continuous on~$\overline U$. The {\em convex envelope} of a function $u \in C(U)$ is denoted by
\begin{equation*}
\Gamma_u(x) := \sup_{p \in \R^d} \inf_{y \in U} (u(y) + p \cdot (x - y)).
\end{equation*}
Although $\Gamma_u$ depends on $U$, we do not display this dependence. Given a function $w \in C(U)$ and $x \in U$, the {\em subdifferential} of $w$ at $x$ is denoted by
\begin{equation*}
\partial w(x) := \left\{ p \in \R^d \,:\, w(y)  \geq w(x) + p \cdot (y-x) \quad \mbox{for all}  \  y \in U \right\}
\end{equation*}
and, for each $V\subseteq U$, we denote the image of $V$ under $\partial w$ by 
\begin{equation*}\label{}
\partial w(V) := \bigcup_{x\in V} \partial w(x).
\end{equation*}
We now define, for every $F\in \Omega$ and bounded domain $U\subseteq \Rd$, the quantity
\begin{equation*}\label{}
\mu(U,F):= \frac{1}{|U|} \sup\big\{ \left| \partial \Gamma_u(U)\right|\,:\, u \in  S(U,F) \big\}.
\end{equation*}
Here and throughout the paper, $|E|$ denotes the Lebesgue measure of $E \subseteq\Rd$. 

\smallskip

To get a rough geometric idea of what exactly $\mu$ is measuring, notice that if $F \in \overline\Omega$ is a constant coefficient operator, then $\mu$ has the following simple form:
\begin{equation*}\label{}
\mu(U,F) = \mu(F)= \sup\left\{ \det A\,:\, A\in \Sy, \ A \geq 0, \ F(A) \geq 0 \right\}.
\end{equation*}
In other words, if $F$ is independent of $x$, then an optimizer~$u$ in the definition of~$\mu$ is a simultaneous solution of $F(D^2u) = 0$ and the Monge-Amp\`ere equation $\det D^2u = k$, with the largest possible $k>0$. In the general case, assuming $\Gamma_u$ has enough regularity (and we will see below that it does), the area formula for Lipschitz functions permits us to write

\begin{equation*}\label{}
\mu(U,F) = \sup\left\{ \fint_{U} \det D^2 \Gamma_u(x)\,dx \,: \, u\in C(\overline U) \ \mbox{satisfies}\ F(D^2u,x) \geq 0 \ \mbox{in} \ U \right\}.
\end{equation*}
(Here and throughout, we denote the average over $E$ by $\fint_E$, that is, $\fint_E f(x)\,dx  : = |E|^{-1} \int_E f(x)\,dx$.) Thus $\mu$ is an affine-invariant quantity which measures how much curvature the graph of the convex envelope of a solution of $F=0$ may have.

\smallskip

We remark that in the case $F(0,x) \leq 0$, we trivially have $\mu(U,F) =0$ by the maximum principle. The definition of $\mu$ may therefore seem strange to a reader who has in mind a linear operator. This confusion disappears in view of the fact that, in the proof of Theorem~\ref{t.full}, we apply the estimates for the quantity $\mu$ obtained in the next two sections not only to $F$ but to all translations of $F$ by paraboloids, i.e., to all operators of the form $F_A(B,x):= F(A+B,x)$. The reason for suppressing the dependence on $A$ at this stage can be found in~Subsection~\ref{ss.pushfor}.

\subsection{The definition of $\mu_*(U,F)$}

As we will see below, the quantity $\mu(U,F)$ controls solutions of $F=0$ from below. In order to control solutions from above, we introduce the twin of $\mu(U,F)$, which we denote by $\mu_*(U,F)$. Before giving its definition, we first define an involution $F\mapsto F_*$ on $\Omega$ by
\begin{equation*}\label{}
F_*(A,x):= -F(-A,x),\quad (A,x) \in \Sy\times \Rd.
\end{equation*}
One can check that the map $F \mapsto F_*$ is indeed a bijection from $\Omega(\Lambda)$ to itself, and $F_{**} = F$. The usefulness in considering $F_*$ is due to the fact that, for each $u\in C(\overline U)$,
\begin{equation}\label{e.odd.dual}
u\in S(U,F_*) \quad \iff \quad v:= -u \quad \mbox{satisfies} \quad F(D^2v,x) \leq 0 \quad \mbox{in} \ U,
\end{equation}
which is an immediate consequence of the viscosity solution definitions.

We define, for every bounded domain $U\subseteq\Rd$,
\begin{align}\label{e.invs}
\mu_*(U,F)  : =
& \ \frac{1}{|U|} \sup\big\{ \left| \partial \Gamma_{-u}(U) \right| \,: \,  \mbox{$u \in C(\overline U)$ satisfies $F(D^2u,x) \leq 0$ in $U$} \big\} \\ = & \
\frac{1}{|U|} \sup\big\{ \left| \partial \Gamma_{u}(U) \right| \,: \,  \mbox{$u\in S(U,F_*)$ } \big\} = \mu(U,F_*).\nonumber
\end{align}
In short, the quantity $\mu_*$ is the analogue of $\mu$ for subsolutions of $F(D^2u,x) = 0$ rather than supersolutions. Often we write $\mu(U,F_*)$ in place of $\mu_*(U,F)$.

\subsection{Pushforwards of $\P$}
\label{ss.pushfor}
Recall that if $\pi :\Omega \to \Omega$ is an $\F$--measurable map, then the \emph{pushforward of $\P$ under $\pi$} is the probability measure $\pi_{\#}\P$ defined by
\begin{equation*} \label{}
\pi_{\#}\P\left[E\right]:= \P\left[ \pi^{-1}(E) \right].
\end{equation*}
The pushforward of $\P$ under the involution $F \mapsto F_*$ 
enjoys the same hypotheses as $\P$. Therefore, in view of~\eqref{e.invs}, many assertions we make concerning~$\mu$ have analogous formulations in terms of~$\mu_*$. Similarly, for every $s \in \R$, the pushforward of~$\P$ under the shift map $F \mapsto F + s$, where $(F+s)(A,x) := F(A,x) + s$, also preserves the hypotheses except that the constant $K_0$ in~\eqref{e.Fubb} must be replaced by $K_0+|s|$. Likewise, for $A\in \Sy$, the pushforward of~$\P$ under the translation $F\mapsto F_A$, given by
\begin{equation}\label{e.Ftrans}
F_A(B,x):= F(A+B,x), \quad (B,x) \in \Sy \times \Rd,
\end{equation}
also satisfies the same hypotheses as $\P$, after we replace $K_0$ by $K_0+d\Lambda |A|$.

\subsection{A triadic cube decomposition}
Throughout the paper, we work with the following triadic cube decomposition. For every $m\in \Z$, we set 
\begin{equation*}
Q_m :=  \left(- \tfrac{1}{2} 3^m, \tfrac{1}{2} 3^m \right)^d
\end{equation*}
and, for every $x\in \Rd$, we denote
\begin{align*}\label{}
Q_m(x)  :=
3^{m} \left\lfloor 3^{-m} x+ \tfrac12 \right\rfloor  + Q_m.
\end{align*}
Here $\left\lfloor r \right\rfloor$ denotes, for $r\in \R$, the largest integer not larger than $r$ and we write $\lfloor y \rfloor  := \left( \lfloor y_i \rfloor\right)$ for $y=(y_i)\in \Rd$. Up to a set of zero Lebesgue measure, $Q_m(x)$ is the unique cube of the form $3^mk + Q_m$, with $k\in \Z^d$, containing $x$. Also note that, up to a zero measure set, $Q_m(x) = Q_m(y)$ if and only if $x\in Q_m(y)$. In particular, the cube $Q_m(x)$ is \emph{not} the centered at $x$ unless $x\in 3^m\Zd$. There are exactly $3^{d(m-n)}$ cubes of the form $Q_n(x)$ with $Q_n(x) \subseteq Q_m$, and these form an exact partition of~$Q_m$ up to a set of Lebesgue measure zero. 

\smallskip

For every $m\in \Z$, $\{ Q_m(x)\,:\, x\in \Rd\}$ is a pairwise disjoint partition of $\Rd$, up to a set of zero Lebesgue measure. Likewise, for each $m\in \Z$ and $n\in \N$,  $\{ Q_m(x)\,:\, x\in Q_{m+n}\}$ is a pairwise disjoint partition of $Q_{m+n}$ into $3^{d n}$ distinct subcubes, up to a zero measure set. 

Note that, for every $m\in \N$ and $x,y\in \Rd$, the cubes $Q_m(x)$ and $Q_m(y)$ are integer translations of each other, and therefore~\eqref{e.stat} implies for example that the random variables $\mu(Q_m(x),F)$ and $\mu(Q_m(y),F)$ have the same distribution under $\P$.

It is often notationally convenient to express sums over our triadic cubes as integrals: for example, we may write
\begin{equation*}\label{}
\sum_{\{ Q \,:\, Q = Q_m(x) \subseteq Q_{m+n}\}} \mu(Q,F) = \frac{1}{|Q_m|}\int_{Q_{m+n}} \mu(Q_m(x),F) \, dx.
\end{equation*}

\subsection{Basic properties of $\mu$}

We begin by showing that $\mu$ controls supersolutions from below. 

\begin{lemma}
\label{l.muabp}
There is a constant $C(d) > 0$ such that, for every $F\in \Omega$, $x\in \Rd$, $m\in \Z$ and $u\in S(Q_m(x),F)$,
\begin{equation}\label{e.muabp}
\inf_{\partial Q_m(x)} u  \leq \inf_{Q_m(x)} u + C 3^{2m}  \mu(Q_m(x),F)^{1/d}.
\end{equation}
\end{lemma}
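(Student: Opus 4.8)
The inequality \eqref{e.muabp} is a scaled version of the classical Alexandrov--Bakelman--Pucci (ABP) estimate, rephrased in terms of $\mu$. The plan is to reduce to the unit-scale case $m=0$ by rescaling, then recognize that the measure of the image of the convex envelope under its subdifferential is exactly the geometric quantity that the ABP argument controls. First I would reduce to $x=0$ using stationarity (though here we don't even need stationarity: the statement is deterministic in $F$, so we just translate coordinates so that $Q_m(x)$ is centered at the origin). Next I would rescale: writing $Q_m(x) = y_0 + Q_m$ and setting $\tilde u(z) := 3^{-2m} u(y_0 + 3^m z)$ for $z \in Q_0$, one checks that $\tilde u \in S(Q_0, \tilde F)$ where $\tilde F(A, z) := F(A, y_0 + 3^m z)$, and that $\mu(Q_0, \tilde F) = \mu(Q_m(x), F)$ while the oscillation of $\tilde u$ across $Q_0$ is $3^{-2m}$ times that of $u$ across $Q_m(x)$. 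So it suffices to prove: for every $F \in \Omega$ and $u \in S(Q_0, F)$,
\begin{equation*}
\inf_{\partial Q_0} u \leq \inf_{Q_0} u + C\,\mu(Q_0, F)^{1/d}.
\end{equation*}

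For this unit-scale statement I would run the standard ABP geometric argument. Let $w := u - \inf_{\partial Q_0} u$ extended appropriately, and suppose $m_0 := \inf_{\partial Q_0} u - \inf_{Q_0} u > 0$ (otherwise there is nothing to prove). The key point is the \emph{measure-theoretic lemma} underlying ABP: if a continuous function $w$ on a domain $U$ of diameter $R$ satisfies $\min_U w \leq \min_{\partial U} w - t$ for some $t > 0$, then the image of the contact set of the convex envelope under the subdifferential map contains a ball of radius comparable to $t/R$; concretely, $\partial \Gamma_w(U) \supseteq B(0, c\, t / \diam U)$ for a dimensional constant $c$. Indeed, any slope $p$ with $|p| < (\min_{\partial U} w - \min_U w)/\diam(U)$ gives an affine function $\ell_p(y) = w(x_{\min}) + p\cdot(y - x_{\min})$ that lies below $w$ on $\partial U$ after we slide it down to touch the graph of $w$ from below somewhere in the interior; that touching point lies in the contact set of $\Gamma_w$ and has $p$ in its subdifferential. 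Applying this with $U = Q_0$, $\diam Q_0 = \sqrt d$, and $t = m_0$ yields $|\partial \Gamma_u(Q_0)| = |\partial \Gamma_w(Q_0)| \geq |B(0, c\, m_0/\sqrt d)| = c' m_0^d$. Since $u \in S(Q_0,F)$, by definition $|\partial\Gamma_u(Q_0)| \leq |Q_0|\,\mu(Q_0,F) = \mu(Q_0,F)$, so $m_0 \leq C \mu(Q_0,F)^{1/d}$, which is the claim.

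Unwinding the rescaling reinstates the factor $3^{2m}$ (from $m_0 \mapsto 3^{-2m} m_0$) and absorbs the $3^m$ coming from $\diam Q_m(x) = \sqrt d\, 3^m$ into the dimensional constant, giving \eqref{e.muabp}. I expect the only genuinely substantive step is the geometric measure lemma "$\partial\Gamma_w(U) \supseteq B(0, c\,t/\diam U)$," but this is entirely classical (it is the heart of the ABP maximum principle; see e.g.\ \cite{CC}) and requires no input from $F$ beyond the fact that $u$ is in $S(U,F)$, which is used only at the very last line to bound the image measure by $\mu$. Everything else—the centering, the parabolic rescaling $u \mapsto 3^{-2m}u(y_0 + 3^m\cdot)$, the invariance of $\mu$ and of the relevant oscillation under that rescaling—is bookkeeping.
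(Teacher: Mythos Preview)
Your proposal is correct and follows essentially the same approach as the paper: reduce by translation and rescaling to the unit cube $Q_0$, then show that every slope $p$ with $|p|< a/\diam(Q_0)$ (where $a=\inf_{\partial Q_0}u-\inf_{Q_0}u$) lies in $\partial\Gamma_u(Q_0)$, so that $|\partial\Gamma_u(Q_0)|\geq c\,a^d$ and the definition of $\mu$ gives the bound. The paper simply carries out the geometric lemma you quote directly rather than citing it, but the argument is the same.
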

\begin{proof}
By translating and rescaling, we may suppose that $x=0$ and $m=0$. We may also assume that $a:= \inf_{\partial Q_0} u - \inf_{Q_0} u>0$, since otherwise there is nothing to show. Select $x_0\in Q_0$ such that $\inf_{Q_0} u = u(x_0)$. For every $p\in \Rd$ such that $|p| < a(\diam(Q_0))^{-1} = ad^{-1/2}$, we have
\begin{equation*}\label{}
u(x_0) - p\cdot x_0 = \inf_{\partial Q_0} u - a - p\cdot x_0 \leq \inf_{y \in \partial Q_0} \left( u(y) - p\cdot y \right) \underbrace{-a + |p|\diam(Q_0)}_{\leq 0}.
\end{equation*}
Hence for any such $p$, the map $x\mapsto u(x) - p\cdot x$ achieves its infimum with respect to $Q_0$ at some point of $Q_0$ and thus $p \in \partial \Gamma_u(Q_0)$. We deduce that $B_{ad^{-1/2}} \subseteq \partial \Gamma_u(Q_0)$. In particular, 
\begin{equation*}\label{}
\left| \partial \Gamma_u(Q_0) \right| \geq \left| B_{ad^{-1/2}}\right| = |B_1| \left( ad^{-1/2} \right)^d.
\end{equation*}
Rearranging and using $u\in S(Q_0,F)$, we obtain
\begin{equation*}\label{}
a \leq |B_1|^{-1/d} d^{1/2} \left( \frac{\left| \partial \Gamma_u(Q_0) \right|}{|Q_0|} \right)^{1/d} \leq C \mu(Q_0,F)^{1/d}.\qedhere
\end{equation*}
\end{proof}

The following lemma is a variation on Lemma 3.3 of \cite{CC}.  We include a proof for completeness and the reader's convenience.

\begin{lemma}
\label{l.envelopec11}
Suppose that $U \subseteq \R^d$ is open, $B_R\subseteq U$ and $u \in C(\overline U)$ satisfies 
\begin{equation*} \label{}
\Pu^+_{1,\Lambda}(D^2 u) \geq -1 \quad \mbox{in} \ U.
\end{equation*}
Then there exists $C(d,\Lambda)>0$ such that, for every $x_0 \in \{ x\in U:\Gamma_u(x) = u( x) \}$, $p\in \partial \Gamma_u(x_0)$ and $0<4r<R$, 
\begin{equation*}
\partial \Gamma_u\left(B_r(x_0)\right) \subseteq B_{2 r + CR^{-2} r^3}(p).
\end{equation*}
\end{lemma}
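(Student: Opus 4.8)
The plan is to estimate how far $\partial\Gamma_u(B_r(x_0))$ can spread out from $p$ by combining a one-sided estimate (coming from convexity of $\Gamma_u$) with a two-sided estimate (coming from the subsolution property $\Pu^+_{1,\Lambda}(D^2u)\geq -1$). After translating, assume $x_0=0$, $\Gamma_u(0)=u(0)$, $p\in\partial\Gamma_u(0)$, and (subtracting the affine function $u(0)+p\cdot x$) that $u(0)=0$ and $0\in\partial\Gamma_u(0)$, so $\Gamma_u\geq 0$ on $U$ with $\Gamma_u(0)=0$.

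First I would record the elementary ingredient that $\Gamma_u$, being convex and equal to $u$ at its contact points, satisfies the ABP-type pointwise bound coming from the sub-solution inequality: at any contact point $x_1\in B_R$ with $\Gamma_u(x_1)=u(x_1)$, and for a supporting plane there, one has (since $u$ lies above that plane on all of $U\supseteq B_R$, and $\Pu^+_{1,\Lambda}(D^2u)\geq -1$) the classical Alexandrov estimate $u(x_1)\leq \sup_{\partial B_R}u^- + C R^2$ -- more usefully, for the oscillation of $u$ over $B_R$ relative to the plane at $x_1$, one gets that $\Gamma_u(x)\leq CR^{-2}$-controlled paraboloids. The clean way to package this, following Lemma~3.3 of \cite{CC}, is: for $x_1\in B_{R/2}$ a contact point and $q\in\partial\Gamma_u(x_1)$,
\begin{equation*}
|q - p| \leq C\, R^{-1}\,\osc_{B_R}\Gamma_u + (\text{small correction}),
\end{equation*}
together with the bound $\osc_{B_R}\Gamma_u\leq \sup_{B_R}u - \inf_{B_R}u^- \leq CR^2$ obtained from Alexandrov applied on $B_R$. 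Quantitatively, the subsolution bound gives that on $B_R$ the convex envelope $\Gamma_u$ is trapped below a paraboloid of opening $\leq CR^{-2}\cdot(\text{diam})^2$-type; more precisely, for any $x$ and contact point, the supporting slopes grow at most linearly with the right constant.

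The heart of the argument is the quantitative claim: if $x_1\in B_r$ is a contact point and $q\in\partial\Gamma_u(x_1)$, then $|q|\leq 2r + CR^{-2}r^3$. The first term $2r$ is the "flat" estimate: if $u$ (hence $\Gamma_u$) were convex on $B_R$ with bounded slope $S$ on $\partial B_R$, convexity forces $|q|\leq$ (ratio governed by $r/R$) times that slope; but since all we know a priori is nonnegativity and the contact structure, I would instead argue directly that $\Gamma_u(x)\leq \frac{S}{R}|x|^2$-type on $B_R$ with $S\leq CR^2$ from Alexandrov, and that a convex nonnegative function $\Gamma_u$ with $\Gamma_u(0)=0$ and $\Gamma_u\leq \phi$ on $B_R$ for a $C^{1,1}$ paraboloid $\phi$ with $\phi(0)=0$, $D\phi(0)=0$, $D^2\phi=cR^{-2}I$ with $c\leq C$, has supporting slopes at points of $B_r$ bounded by $2r\cdot\sup_{B_R}|D^2\phi| + o(\cdot)$; Taylor expanding $\phi$ and using $\Gamma_u\leq\phi$ on the sphere of radius $\rho\in(r,R)$ around $x_1$ gives $|q|\rho \leq \phi(\cdot)\leq \frac{c}{R^2}(\rho+r)^2\leq \frac{c}{R^2}(\rho^2 + 2r\rho + r^2)$, hence $|q|\leq \frac{c}{R^2}(\rho + 2r + r^2/\rho)$; optimizing $\rho$ (essentially $\rho\sim r$ up to constants, using $4r<R$) yields $|q|\leq 2r + CR^{-2}r^3$ after absorbing the lower-order terms into the cubic correction using $r<R$. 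Finally, since $\partial\Gamma_u(B_r(x_0))$ is the union over contact points in $B_r$ of the supporting slopes there (non-contact points contribute nothing new, as the slope is locally constant on the "flat" pieces and such a slope is attained at a contact point), the bound $|q-p|\leq 2r+CR^{-2}r^3$ holds for all $q\in\partial\Gamma_u(B_r(x_0))$, which is the conclusion.

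The main obstacle is getting the constant in the leading term to be exactly $2$ rather than some dimensional/ellipticity constant: this forces me to separate the genuinely "flat" spreading of a convex function over a ball of radius $r$ (which is a purely geometric factor of $2$, from $(\rho+r)/\rho\to 2$ as $\rho\to r$) from the genuinely "curved" contribution of size $O(R^{-2}r^3)$ coming from the subsolution inequality, and to make sure the Alexandrov-type bound on $\osc_{B_R}\Gamma_u$ feeds in only through the cubic correction term and not the linear one. Checking that the cross terms and the $r^2/\rho$ term really can be absorbed into $CR^{-2}r^3$ using only $4r<R$ is a short but slightly delicate computation; everything else is routine convexity and the standard Alexandrov-Bakelman-Pucci estimate for $\Pu^+_{1,\Lambda}$.
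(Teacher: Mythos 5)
Your approach has a genuine gap, and it is exactly the trap that Example~\ref{ex.twodahman} in the paper is designed to warn against. The central ingredient you propose is a quadratic upper bound on the convex envelope, something like $\Gamma_u(x) \leq \phi(x)$ on $B_R$ with $\phi$ a convex paraboloid of controlled opening and $\phi(0)=0$, $D\phi(0)=0$. But no such bound is available. The inequality $\Pu^+_{1,\Lambda}(D^2 u)\geq -1$ is a \emph{supersolution} inequality: by comparison with $-\tfrac{1}{2d}|x|^2$ it controls $u$ (hence $\Gamma_u$) from \emph{below}, not from above. There is no mechanism that prevents $u$, and hence $\Gamma_u$, from growing linearly away from $0$, and indeed in Example~\ref{ex.twodahman} the envelope $w$ grows like $2(x_1-1)_+$ near $x_1=1$ and is in no way trapped below a small multiple of $|x|^2$. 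Since the quadratic trap is false, the subsequent optimization over spheres of radius $\rho$ never gets off the ground. You should also notice that even granting the trap, your arithmetic does not deliver the claimed leading constant: if $\Gamma_u\leq \tfrac{c}{R^2}|x|^2$ the bound $|q|\leq \tfrac{c}{R^2}(\rho+2r+r^2/\rho)$ at $\rho\sim r$ gives $|q|\lesssim r/R^2$, which is far \emph{smaller} than $2r$ and contradicts the sharpness of the lemma shown by the example; while if the trap were $\Gamma_u\leq C|x|^2$ with $C$ merely dimensional, you get $|q|\leq 4Cr$, i.e.\ the leading constant is $4C$, not $2$. In short, neither version of the paraboloid trap reproduces the statement, and the naive estimate ``$D^2\Gamma_u\leq I$ at contact points, hence $\partial\Gamma_u$ spreads linearly with a small constant'' is precisely what the authors point out is false.

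The paper's proof is genuinely different and more delicate. Rather than a global upper envelope, it uses the linear \emph{lower} bound $\Gamma_u(x)\geq |q|\,\max\{0,\,e_1\cdot x - r\}$ forced by the presence of a large slope $q$ at a point of $B_r$, and compares $u$ against a carefully built test function $\varphi$ (a paraboloid with one strongly positive eigenvalue in the $e_1$ direction and small negative eigenvalues in the orthogonal directions, satisfying $\Pu^+_{1,\Lambda}(D^2\varphi)=-1$) on a thin \emph{cylinder} $S=(-2\delta r,\,2r)\times B_1'$. The linear lower bound on $\Gamma_u$ is used to dominate $\varphi$ on the two flat caps of $\partial S$, so that the maximum principle forces the extremal behavior onto the lateral boundary, from which the bound $\delta\lesssim \Lambda(d-1)r^2$ drops out. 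The sign structure of $\varphi$ (non-convex!) and the anisotropic geometry of $S$ are what allow the leading constant $2$ and the cubic correction $CR^{-2}r^3$ to emerge cleanly; no attempt is made to control $\Gamma_u$ from above away from the contact set. To repair your proposal you would have to abandon the global paraboloid bound entirely and instead localize the comparison to a thin domain where the known linear growth of $\Gamma_u$ is enough to control the boundary data, which is essentially what the paper does.
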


\begin{proof}
We may assume $x_0=0$ and, by subtracting a plane from $u$, that $p=0$ and $u(0) = 0$. By a scaling argument, it suffices to consider the case $R = 4$ and $0<r<1$ and to prove, for some $C(d,\Lambda)>0$, that
\begin{equation} \label{e.pects}
\partial \Gamma_u(B_r) \subseteq B_{2r + Cr^3}.
\end{equation}
We suppose that $q \in \partial \Gamma_u(B_r)$ and $|q| \geq 2 (1 + \delta)^3 r$ for some $0<\delta< 1$ and endeavor to prove an upper bound on $\delta$.  By rotating the coordinates, we may assume that $q = |q| e_1$. We get 
\begin{equation} \label{e.smects}
u \geq \Gamma_u \geq 2 (1 + \delta)^3 r \max\{ 0, e_1 \cdot x - r \} \quad \mbox{in } B_4.
\end{equation}
Let $S$ denote the cylinder
\begin{equation*}
S := (-2\delta r,2 r) \times B_1',
\end{equation*}
where $B_1'$ denotes the unit ball in $\R^{d-1}$. Consider test function
\begin{equation*}
\varphi(x) := \frac{(1 + \delta)}{2} (e_1 \cdot x + 2 \delta r)^2 -\frac{\delta}{2\Lambda(d-1)} |x - (e_1 \cdot x) e_1|^2.
\end{equation*}
After a computation, we find that
\begin{equation*} \label{}
\Pu^+_{1,\Lambda}(D^2 \varphi) = -1 \quad \mbox{in} \ \Rd.
\end{equation*}
Since $\varphi(0) \geq 0 = u(0)$ and $S \subseteq B_4$, the comparison principle implies that 
\begin{equation*} \label{}
\inf_{\partial S} (\Gamma_u - \varphi) \leq \inf_{\partial S} (u - \varphi) \leq 0.
\end{equation*}
Using~\eqref{e.smects} and $0 < \delta < 1$, it is straightforward to check that 
\begin{equation*}
\varphi \leq 2 (1 + \delta)^3 r^2 \leq \Gamma_u \quad \mbox{on } \{ 2 r \} \times B_1'
\end{equation*}
and
\begin{equation*}
\varphi \leq 0 \leq \Gamma_u \quad \mbox{on } \{ - 2 \delta r \} \times B_1'.
\end{equation*}
We are forced to conclude, using~\eqref{e.smects} and the definition of $\varphi$, 
\begin{equation*}
0 \geq \inf_{\left(-2\delta r,2r \right) \times \partial B_1'} (\Gamma_u - \varphi) \geq - \sup_{\left(-2\delta r,2r \right) \times \partial B_1'} \varphi \geq - 2(1 + \delta)^3 r^2 + \frac{\delta}{2 \Lambda (d - 1)}.
\end{equation*}
Rearranging and using $0 < \delta < 1$, we get
\begin{equation*} \label{}
\delta \leq 4 \Lambda(d-1)(1 + \delta)^3 r^2 \leq 32 \Lambda (d - 1) r^2.
\end{equation*}
This holds for all $0< \delta < 1$ such that $|q| \geq 2(1+\delta)^3 r$, and from this we obtain that  $|q| \leq 2 r + C(d,\Lambda) r^3$. This yields~\eqref{e.pects} and completes the proof.
\end{proof}

Since convex solutions of $\Pu^+_{1,\Lambda}(D^2 u) \geq -1$ satisfy $0 \leq D^2 u \leq I$, one might expect that the optimal estimate in \lref{envelopec11} should be: $\partial \Gamma_u(B_r(x_0)) \subseteq B_{r + o(r)}(p)$. Interestingly, this turns out to be false and Lemma~\ref{l.envelopec11} is actually optimal, as the following example shows.

\begin{example}
\label{ex.twodahman}
Given $R > 1$, let $u, w \in C(\R^2)$ and $U \subseteq \R^2$ be defined by
\begin{equation*}
u(x) = \tfrac{1}{2} x_1^2 - \tfrac{1}{2R} \max \{ 0, |x_2| - R \}^2,
\end{equation*}
\begin{equation*}
w(x) = 2 \max \{ 0, x_1 - 1 \} + \tfrac{1}{2} \max \{ 0, |x_1 - 1| - 1 \}^2,
\end{equation*}
and
\begin{equation*}
U = [-R,R]^2 \cup \{ u > w \}.
\end{equation*}
The domain $U$ and the cross sections $u(\cdot,0)$ and $w(\cdot,0)$ are pictured in Figure~\ref{figfig}. One can check $B_R \subseteq U \subseteq [-2R,2R]^2$, $\Pu^+_{1,\Lambda}(D^2 u) \geq -1$ in $U$,  $w$ is the convex envelope of $u$ with respect to the domain $U$, $0 \in \partial w(0)$, and $2e_1 \in \partial w(e_1)$.
\end{example}

\begin{figure}
\begin{tikzpicture}[thick,scale=0.8]
\draw (2,0) -- (3,0.5) -- (2,1) -- (2,2) -- (-2,2) -- (-2,1) -- (-3,0.5) -- (-2,0) -- (-2,-2) -- (2,-2) -- (2,0);
\draw[dotted] (2,0) -- (-2,0);
\draw[dotted] (2,1) -- (-2,1);
\draw (0,0.5) node {$\{ u > w \}$};
\end{tikzpicture}
\qquad
\begin{tikzpicture}[thick]
\draw (-2.50,3.12) --(-2.40,2.88) --(-2.30,2.64) --(-2.20,2.42) --(-2.10,2.21) --(-2.00,2.00) --(-1.90,1.80) --(-1.80,1.62) --(-1.70,1.44) --(-1.60,1.28) --(-1.50,1.12) --(-1.40,0.98) --(-1.30,0.85) --(-1.20,0.72) --(-1.10,0.61) --(-1.00,0.50) --(-0.90,0.41) --(-0.80,0.32) --(-0.70,0.24) --(-0.60,0.18) --(-0.50,0.12) --(-0.40,0.08) --(-0.30,0.04) --(-0.20,0.02) --(-0.10,0.01) --(0.00,0.00) --(0.10,0.01) --(0.20,0.02) --(0.30,0.04) --(0.40,0.08) --(0.50,0.12) --(0.60,0.18) --(0.70,0.24) --(0.80,0.32) --(0.90,0.41) --(1.00,0.50) --(1.10,0.61) --(1.20,0.72) --(1.30,0.85) --(1.40,0.98) --(1.50,1.12) --(1.60,1.28) --(1.70,1.44) --(1.80,1.62) --(1.90,1.80) --(2.00,2.00) --(2.10,2.21) --(2.20,2.42) --(2.30,2.64) --(2.40,2.88) --(2.50,3.12);
\draw (0,0) -- (1,0) -- (2,2);
\fill (0,0) circle (2pt);
\fill (1,0) circle (2pt);
\fill (2,2) circle (2pt);
\draw (0.5,0.5) node {$u$};
\draw (1.5,0.25) node {$w$};
\end{tikzpicture}
\caption{The domain $U$ and cross sections $u(\cdot,0)$ and $w(\cdot,0)$ in Example~\ref{ex.twodahman}.}
\label{figfig}
\end{figure}
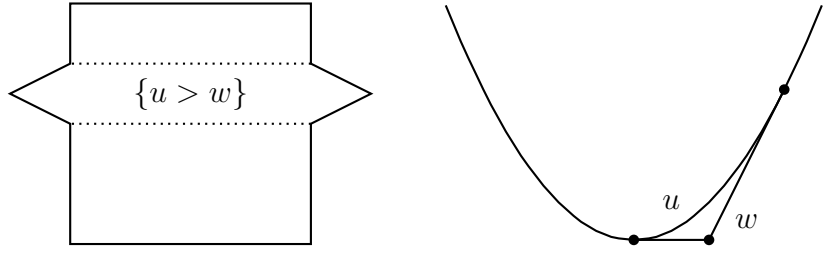

Example~\ref{ex.twodahman} serves as a warning that convex envelopes of supersolutions may not be so regular: singularities may can propagate inward from the boundary. The next lemma asserts that such singularities have no impact on the Lebesgue measure of the subdifferential of the convex envelope. The idea for this estimate was extracted from the proof of Lemma 3.5 in \cite{CC}.  However, the statement  here is more general (it does not require that $u$ be a supersolution) and the extra generality leads to a simpler proof.

\begin{lemma}
\label{l.envelopefacet}
Assume that $U \subseteq \R^d$ is open, $u \in C(\overline U)$, $x_0\in U$ and $r>0$ such that
\begin{equation*} \label{}
B_r(x_0) \subseteq \left\{ x\in U\,:\, \Gamma_u(x) < u(x)\right\}.
\end{equation*}
Then $\left|\partial \Gamma_u(B_r(x_0))\right| = 0$.
\end{lemma}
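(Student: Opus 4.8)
The plan is to exploit the fact that on the ball $B_r(x_0)$ the convex envelope $\Gamma_u$ lies strictly below $u$, which forces $\Gamma_u$ to be \emph{affine} near every point of $B_r(x_0)$ that it reaches via its defining supporting planes. Recall that $\Gamma_u(x) = \sup_p\inf_{y\in U}(u(y)+p\cdot(x-y))$, so for any $x\in B_r(x_0)$ and any $p\in\partial\Gamma_u(x)$, the supporting plane $\ell(z) = \Gamma_u(x) + p\cdot(z-x)$ satisfies $\ell\le u$ on $U$ and touches $\Gamma_u$ at $x$. Since $\Gamma_u$ is the largest convex function below $u$ and $\Gamma_u(x_0') < u(x_0')$ for $x_0'\in B_r(x_0)$, the touching point of this plane with $u$ must lie \emph{outside} $B_r(x_0)$: indeed if $\ell$ touched $\Gamma_u$ at some interior point of $B_r(x_0)$ and $\Gamma_u < u$ there, one could still push up, contradicting maximality unless $\Gamma_u$ is affine on the segment connecting $x$ to the actual contact point. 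The upshot I want is: for each $p\in\partial\Gamma_u(B_r(x_0))$, the contact set $\{z\in U : \Gamma_u(z) = \ell_p(z)\}$ of the supporting plane $\ell_p$ with slope $p$ is a convex set which meets $B_r(x_0)$ but is not contained in $B_r(x_0)$, hence has nonempty intersection with $\partial B_r(x_0)$.

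Granting this, the key geometric observation is a \emph{projection} argument. Define the map $\Phi\colon \partial\Gamma_u(B_r(x_0)) \to \partial B_r(x_0)$ by sending $p$ to (a measurable selection of) a point where the contact set of $\ell_p$ meets $\partial B_r(x_0)$. I claim $\Phi$ is injective up to the usual convex-analysis caveat: two distinct slopes $p\ne p'$ cannot have supporting planes both passing through the same point $z\in\partial B_r(x_0)$ with $\Gamma_u(z)=\ell_p(z)=\ell_{p'}(z)$, \emph{because} $z$ also lies on the segment into $B_r(x_0)$ where $\Gamma_u$ is affine with slope $p$ (resp.\ $p'$), so $z$ is actually an interior point of the affine piece, forcing $p = p' = \nabla\Gamma_u(z)$. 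More carefully: each $p\in\partial\Gamma_u(B_r(x_0))$ is the gradient of $\Gamma_u$ on an affine piece whose relative interior pokes into the open ball $B_r(x_0)$; distinct affine pieces have distinct gradients; and each such piece, extended, hits $\partial B_r(x_0)$. So $\partial\Gamma_u(B_r(x_0))$ is covered by the (distinct) gradients of at most countably many... no — there could be uncountably many affine pieces, but they are pairwise disjoint open faces poking into $B_r(x_0)$. The clean way to conclude $|\partial\Gamma_u(B_r(x_0))| = 0$ is: the set $G := \{z\in B_r(x_0) : \Gamma_u \text{ is affine near } z \text{ with that plane touching } \partial B_r(x_0)\}$ is all of $B_r(x_0)$ hit by these planes, and the gradient $\nabla\Gamma_u$ restricted to $G$ takes each of its values on a set with nonempty interior in $B_r(x_0)$, while $\Gamma_u$ is convex hence its gradient is monotone; the image $\partial\Gamma_u(B_r(x_0)) = \nabla\Gamma_u(G)$ then has measure zero because $\Gamma_u$, being convex and locally affine on an open full-measure (in fact, all of) subset, has a gradient that is locally constant on an open dense set, so its range is totally disconnected / at most countable on each ball — wait, that's not quite right either since the locally-affine region, while open in $B_r(x_0)$, need not be all of it; but by hypothesis $\Gamma_u < u$ on \emph{all} of $B_r(x_0)$, so the argument of the first paragraph applies at every point of $B_r(x_0)$, hence $\Gamma_u$ \emph{is} locally affine on all of $B_r(x_0)$, i.e., $B_r(x_0)$ is partitioned into relatively open convex faces on each of which $\nabla\Gamma_u$ is constant. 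A convex function that is locally affine on an open set has image-of-gradient of measure zero there (the gradient is locally constant, so its range is at most countable on any connected open piece — actually on $B_r(x_0)$ itself the set of values is a set over which the faces tile $B_r(x_0)$, and a tiling of an open set by more-than-countably-many positive-measure pieces is impossible, so there are at most countably many distinct gradient values). Therefore $\partial\Gamma_u(B_r(x_0))$ is at most countable, hence Lebesgue-null.

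So the concrete steps, in order, are: (1) show that for every $z\in B_r(x_0)$ and every $p\in\partial\Gamma_u(z)$, the supporting plane with slope $p$ does not touch $u$ inside $B_r(x_0)$, using $\Gamma_u < u$ on $B_r(x_0)$ and the maximality of $\Gamma_u$; (2) deduce that the set where $\Gamma_u$ equals that supporting plane is a convex set meeting $\partial B_r(x_0)$, and in particular $\Gamma_u$ restricted to any such contact set is affine, so $B_r(x_0)$ is covered by relatively open affine faces of $\Gamma_u$; (3) observe these faces are pairwise disjoint and each carries a single gradient value, so there are at most countably many, whence $\partial\Gamma_u(B_r(x_0))$ is at most countable and thus has Lebesgue measure zero.

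The main obstacle I anticipate is step (1)–(2): making rigorous the claim that "$\Gamma_u$ is affine along the segment from an interior touching point out to where the plane leaves $B_r(x_0)$." The delicate point is that a supporting plane $\ell_p$ of $\Gamma_u$ touching at $z\in B_r(x_0)$ must, by the variational definition of the convex envelope, actually achieve the infimum $\inf_{y\in U}(u(y) + p\cdot(z - y))$; if $\Gamma_u(z) < u(z)$ this infimum is not attained at $z$, so $\ell_p$ touches $u$ (or at least is a limit of near-touchings) at points $y_k$ with $u(y_k) + p\cdot(z-y_k) \to \Gamma_u(z)$, and one must argue the $y_k$ stay away from $B_r(x_0)$ (or pass to a cluster point and get a genuine contact point outside) — here a compactness/continuity argument on $\overline U$ together with the strict inequality on the compact set $\overline{B_{r'}(x_0)}$ for $r' < r$ does the job, letting $r'\uparrow r$ at the end. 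Once that is pinned down, the rest is soft convex analysis.
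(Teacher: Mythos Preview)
There is a genuine gap. Your central claim---that $\Gamma_u$ is locally affine on $B_r(x_0)$ and hence $B_r(x_0)$ is tiled by relatively open (positive-measure) affine faces---is false in dimension $d\geq 2$. Take $U=(-2,2)^2$ and $u(x_1,x_2)=x_1^2-|x_2|$. One checks that $\Gamma_u(x_1,x_2)=x_1^2-2$, so $\Gamma_u<u$ on all of $U$; yet $\Gamma_u$ is strictly convex in the $x_1$-direction and is nowhere locally affine. The ``faces'' (contact sets of supporting planes) are the vertical lines $\{x_1=c\}$, which have zero area, and there are uncountably many of them. Correspondingly $\partial\Gamma_u(B_r(0))=(-2r,2r)\times\{0\}$ is uncountable. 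The lemma still holds here because this set has two-dimensional Lebesgue measure zero, but your countability argument cannot see that.

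What survives from your outline is step~(2): each supporting plane at a point of $B_r(x_0)$ has a contact set that reaches $\partial B_r(x_0)$. The missing idea is how to pass from this to ``measure zero'' when the faces are lower-dimensional. The paper's proof sidesteps this entirely: it argues by contradiction, assuming $\partial\Gamma_u(B_r(x_0))$ has positive measure and picking a Lebesgue \emph{density point} $p_1$ of it. Density means there are slopes in $\partial\Gamma_u(B_r(x_0))$ arbitrarily close to $p_1$ in \emph{every} direction, and this forces the supporting planes to separate $\Gamma_u$ from zero on all of $\overline U\setminus B_{2r}$, which in turn forces $\inf_U u>0$, contradicting $\Gamma_u(x_1)=0$. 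The density-point trick is exactly what replaces the (unavailable) positive-measure faces.
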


\begin{proof}
By a covering argument, it is enough to show that $|\partial \Gamma_u(B_r(x_0))| = 0$ in the case that $B_{3r}(x_0) \subseteq  \{ u > \Gamma_u \}$.  Arguing by contradiction, we suppose that $B_{3r}(x_0) \subseteq \{ u > \Gamma_u \}$, $x_1 \in B_r(x_0)$, $p_1 \in \partial \Gamma_u (x_1)$, and $p_1$ is a Lebesgue density point of $\partial \Gamma_u (B_r(x_0))$. By translating and adding an affine function to $u$, we may assume that $x_1 = 0$, $p_1 = 0$ and $\Gamma_u(0) = 0$.  In particular, we have $B_{2r} \subseteq B_{3r}(x_0) \subseteq \{ u > \Gamma_u \}$, $0 \in \partial \Gamma_u(0)$, and $0$ is a Lebesgue density point of~$\partial \Gamma_u(B_r)$.

Using that $0$ is a Lebesgue density point of $\partial \Gamma_u(B_r)$, for any given $x \in \partial B_r$, there exists $q \in \partial \Gamma_u(B_r) \setminus \{ 0 \}$ such that
\begin{equation*}
q \cdot x \geq \frac{3}{4} |x| |q|.
\end{equation*}
Let $y \in B_r$ be such that $q \in \partial \Gamma_u(y)$.  Taking $\alpha \geq 2$ such that $\alpha x \in \overline U$ and using that $\Gamma_u\geq 0$, we obtain
\begin{equation*}
\Gamma_u(\alpha x) \geq \Gamma_u(y)  + q \cdot (\alpha x - y)  \geq \alpha q \cdot x - q \cdot y \geq \frac{3}{4} \alpha r |q| - r |q| > 0.
\end{equation*}
Since this holds for all $x\in \partial B_r$, we deduce that
\begin{equation*}
\Gamma_u > 0 \quad \mbox{on} \ \overline U \setminus B_{2r}.
\end{equation*}
Since $0 \in \partial \Gamma_u(0)$ and $\Gamma_u(0)=0$ imply that $\inf_{B_{2r}} \Gamma_u = 0$, and using the fact that $u > \Gamma_u$ on $\overline B_{2r}$, we conclude that
\begin{equation*}
\inf_{U} u > 0.
\end{equation*}
This contradicts $\Gamma_u(0) = 0$, by the definition of convex envelope.
\end{proof}

We next combine Lemmas~\ref{l.envelopec11} and~\ref{l.envelopefacet} to get the boundedness and Lipschitz continuity (with respect to perturbing by parabolas) of the Lebesgue measure of the subdifferential of a supersolution.

\begin{lemma}
\label{l.mugood}
Assume $U \subseteq \R^d$ is bounded and open and $u \in C(U)$ satisfies
\begin{equation*}
\Pu^+_{1,\Lambda}(D^2u) \geq -1 \quad \mbox{in } U.
\end{equation*}
Then
\begin{equation} \label{e.sharpie}
|\partial \Gamma_u(U)| \leq 2^d \left|\left\{ x\in U\,:\, u(x) = \Gamma_u(x) \right\} \right|.
\end{equation}
\end{lemma}

\begin{proof}
\lref{envelopec11} implies that $\partial \Gamma_u(x)$ is a singleton set, for every $x\in \{ u = \Gamma_u\}$. Since $\Gamma_u$ is convex, this implies that $\Gamma_u$ is differentiable on $\{ u = \Gamma_u\}$. Using again Lemma~\ref{l.envelopec11}, the Lebesgue differentiation theorem and a covering argument, we obtain  
\begin{equation*} \label{}
\left| \partial \Gamma_u(\{ u = \Gamma_u \})\right| = \left| D \Gamma_u(\{ u = \Gamma_u \})\right| \leq 2^d \left| \left\{ u = \Gamma_u \right\}\right|.
\end{equation*}
By~\lref{envelopefacet}, 
\begin{equation*} \label{}
\left|\partial \Gamma_u(U)\right| = \left|\partial \Gamma_u( \{ u = \Gamma_u \})\right|.
\end{equation*}
The last two lines yield~\eqref{e.sharpie}.
\end{proof}

\begin{lemma}
\label{l.mubound}
There exists $c(d,\Lambda)> 0$ such that, for every $F \in \Omega$ and $m \in \Z$,
\begin{equation}\label{e.mubound}
c \inf_{x\in Q_m} (F(0,x))_+^d \leq \mu(Q_m,F) \leq 2^d \sup_{x\in Q_m} (F(0,x))_+^d.
\end{equation}
\end{lemma}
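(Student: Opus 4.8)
The two inequalities are of completely different flavor: the lower bound will come from exhibiting a single explicit supersolution, while the upper bound will follow from the regularity estimate already proved in Lemma~\ref{l.mugood}.

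For the \emph{lower bound}, set $\kappa := \inf_{x\in Q_m}(F(0,x))_+$. If $\kappa = 0$ there is nothing to prove since $\mu\geq 0$, so I may assume $\kappa>0$, in which case necessarily $F(0,x)\geq\kappa$ for every $x\in Q_m$. The plan is to test the definition of $\mu$ against the single convex quadratic $u(x):=\tfrac{\kappa}{2d\Lambda}|x|^2$. Since $u$ is smooth I can use~\eqref{e.Fue} in the classical sense with $A=D^2u=\tfrac{\kappa}{d\Lambda}I$ and $B=0$; the one-line computation $\Pu^-_{1,\Lambda}\!\left(\tfrac{\kappa}{d\Lambda}I\right)=-\kappa$ then gives $F(D^2u,x)\geq F(0,x)-\kappa\geq 0$ in $Q_m$, so $u\in S(Q_m,F)$. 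Because $u$ is convex on the convex set $Q_m$ we have $\Gamma_u=u$, hence $\partial\Gamma_u(Q_m)=\nabla u(Q_m)=\tfrac{\kappa}{d\Lambda}Q_m$, a set of Lebesgue measure $\left(\tfrac{\kappa}{d\Lambda}\right)^d|Q_m|$. Dividing by $|Q_m|$ yields the lower bound with $c=(d\Lambda)^{-d}$.

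For the \emph{upper bound}, set $K:=\sup_{x\in Q_m}(F(0,x))_+$. Given an arbitrary $u\in S(Q_m,F)$, I would first apply~\eqref{e.Fue} with $B=0$ to obtain, in the viscosity sense, $\Pu^+_{1,\Lambda}(D^2u)\geq -F(0,x)\geq -K$ in $Q_m$. If $K>0$, the rescaled function $v:=u/K$ satisfies $\Pu^+_{1,\Lambda}(D^2v)\geq -1$ by positive $1$-homogeneity of the Pucci operator, so Lemma~\ref{l.mugood} applies and gives $|\partial\Gamma_v(Q_m)|\leq 2^d|Q_m|$. Using the scaling identities $\Gamma_v=\Gamma_u/K$ and $\partial\Gamma_v = K^{-1}\partial\Gamma_u$, this rearranges to $|\partial\Gamma_u(Q_m)|=K^d|\partial\Gamma_v(Q_m)|\leq 2^dK^d|Q_m|$; taking the supremum over $u$ and dividing by $|Q_m|$ gives $\mu(Q_m,F)\leq 2^dK^d$. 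If $K=0$, the same argument applied to $u/\delta$ for arbitrary $\delta>0$ forces $\mu(Q_m,F)\leq 2^d\delta^d$, hence $\mu(Q_m,F)=0=2^dK^d$.

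I do not anticipate any serious obstacle: both halves are short once Lemma~\ref{l.mugood} is in hand. The only steps requiring a word of care are (i) the passage from~\eqref{e.Fue} to the viscosity inequality $\Pu^+_{1,\Lambda}(D^2u)\geq -K$, which is the routine observation that the Pucci bracketing of $F$ survives testing against smooth functions touching $u$ from below; (ii) the scaling identities $\Gamma_{cu}=c\,\Gamma_u$ and $\partial\Gamma_{cu}=c\,\partial\Gamma_u$ for $c>0$, which are immediate from the definitions; and (iii) the evaluation of $\Pu^\pm_{1,\Lambda}$ on the scalar matrix $\tfrac{\kappa}{d\Lambda}I$, a one-line check. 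The degenerate cases $\kappa=0$ and $K=0$ are dealt with as indicated above.
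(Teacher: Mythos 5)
Your proof is correct and follows the same approach as the paper: the lower bound tests the same paraboloid $\tfrac{\kappa}{2d\Lambda}|x|^2$ against the definition of $\mu$, and the upper bound rescales to apply \eqref{e.sharpie} from Lemma~\ref{l.mugood} (the paper compresses this step to the phrase ``follows from \eqref{e.sharpie} after rescaling''; you have simply spelled out the homogeneity of the Pucci operator, the dilation identity $\partial\Gamma_{cu}=c\,\partial\Gamma_u$, and the degenerate case $K=0$).
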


\begin{proof}
The upper bound of~\eqref{e.mubound} follows from \eqref{e.sharpie} after rescaling.

The get the lower bound in~\eqref{e.mubound}, we set $\lambda := \inf_{x\in Q_m} ( F(0,x))_+$ and observe that the parabola $\varphi(x):= (\lambda/2d\Lambda ) |x|^2$ satisfies, for every $x\in Q_m$,
\begin{multline*}\label{}
F(D^2\varphi(x),x) \geq \Pu^-_{1,\Lambda}(D^2\varphi(x)) + F(0,x) = -\Lambda \Delta \varphi(x) + F(0,x) \\ \geq -\lambda + \inf_{y\in Q_m} F(0,y) \geq 0.
\end{multline*}
Hence $\varphi \in S(Q_m,F)$ and, noting that $\varphi = \Gamma_\varphi$, we find that 
\begin{equation*}\label{}
\mu(Q_m,F) \geq \frac{\left| \partial \varphi(Q_m) \right|}{|Q_m|} = \fint_{Q_m} \det D^2\varphi(y)\,dy = \left(\frac{\lambda}{d\Lambda} \right)^d. \qedhere
\end{equation*}
\end{proof}

\begin{lemma}
\label{l.musub}
For every $F \in \Omega$, $m\in \Z$ and $n\in \N$,
\begin{equation}
\label{e.musub}
\mu(Q_{m+n},F) \leq \fint_{Q_{m+n}} \mu(Q_m(x),F) \,dx.
\end{equation}
\end{lemma}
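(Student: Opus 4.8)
The plan is to derive \eqref{e.musub} from a ``subadditivity'' property of the subdifferential of a convex envelope across the triadic partition of $Q_{m+n}$ into its $3^{dn}$ subcubes $Q = Q_m(x)\subseteq Q_{m+n}$. Fix $F\in\Omega$; we may assume $\sup_{x\in\Rd}(F(0,x))_+>0$, since otherwise $\mu(Q_{m+n},F)=0$ by \lref{mubound} and there is nothing to prove, and then $M:=\sup_{x\in\Rd}|F(0,x)|$ is finite and positive. Fix $u\in S(Q_{m+n},F)$. By uniform ellipticity \eqref{e.Fue}, $\Pu^+_{1,\Lambda}(D^2u)\geq -M$ in $Q_{m+n}$, so Lemmas \lref{envelopec11} and \lref{mugood}, applied to $u/M$, give that $\Gamma_u$ is differentiable at every point of the contact set $C:=\{x\in Q_{m+n}:u(x)=\Gamma_u(x)\}$ and that $D\Gamma_u$ maps Lebesgue-null subsets of $C$ to Lebesgue-null sets (both properties being insensitive to the scaling). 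It will then suffice to prove
\[
|\partial\Gamma_u(Q_{m+n})|\leq \sum_{Q}|\partial\Gamma_{u|_Q}(Q)|,
\]
the sum taken over the subcubes $Q=Q_m(x)\subseteq Q_{m+n}$: indeed $u|_Q\in S(Q,F)$ for each such $Q$, so the definition of $\mu$ bounds the right side by $|Q_m|\sum_Q\mu(Q,F)$; dividing by $|Q_{m+n}|=3^{dn}|Q_m|$, using the identity $\fint_{Q_{m+n}}\mu(Q_m(x),F)\,dx=3^{-dn}\sum_Q\mu(Q,F)$, and taking the supremum over $u\in S(Q_{m+n},F)$ then yields \eqref{e.musub}.

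To prove the displayed inequality, first I would show that $\partial\Gamma_u(Q_{m+n})\subseteq\bigcup_Q\partial\Gamma_{u|_Q}(Q)$ up to a Lebesgue-null set. Since $\{x\in Q_{m+n}:\Gamma_u(x)<u(x)\}$ is open, \lref{envelopefacet} and a ball covering show its contribution to $\partial\Gamma_u(Q_{m+n})$ is null, so $\partial\Gamma_u(Q_{m+n})$ agrees modulo a null set with $\partial\Gamma_u(C)=D\Gamma_u(C)$. Writing $G:=\bigcup_Q\partial Q$ for the union of the boundaries of the subcubes, $G$ is Lebesgue-null in $\Rd$, hence so is $C\cap G$, hence $D\Gamma_u(C\cap G)$ is null; thus $D\Gamma_u(C)$ coincides, modulo a null set, with $\bigcup_Q D\Gamma_u(C\cap Q)$, where now each $Q$ is an \emph{open} subcube.

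It remains to check $D\Gamma_u(C\cap Q)\subseteq\partial\Gamma_{u|_Q}(Q)$ for each open subcube $Q$, which is elementary convex analysis. If $x_0\in C\cap Q$ and $p:=D\Gamma_u(x_0)$, then convexity of $\Gamma_u$ gives $\Gamma_u(y)\geq\Gamma_u(x_0)+p\cdot(y-x_0)$ for every $y\in Q_{m+n}$, so the affine function $\ell(y):=u(x_0)+p\cdot(y-x_0)$ satisfies $\ell\leq\Gamma_u\leq u$ on $\overline Q\subseteq Q_{m+n}$ and $\ell(x_0)=u(x_0)$. Being affine and $\leq u$ on $\overline Q$, $\ell$ is $\leq\Gamma_{u|_Q}$ on $Q$, and since $\ell(x_0)=u(x_0)\geq\Gamma_{u|_Q}(x_0)\geq\ell(x_0)$ we get $\Gamma_{u|_Q}(x_0)=\ell(x_0)$; hence $p\in\partial\Gamma_{u|_Q}(x_0)\subseteq\partial\Gamma_{u|_Q}(Q)$, using that $x_0$ lies in the open cube $Q$. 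Combined with the previous paragraph, this gives the displayed inequality.

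I expect the one delicate point to be the measure-theoretic bookkeeping of the second paragraph --- discarding both the region where $\Gamma_u<u$ and the contact points lying on the grid $G$ --- the first being \lref{envelopefacet} and the second resting on the local estimate of \lref{envelopec11} (equivalently, on the fact, established inside the proof of \lref{mugood}, that $D\Gamma_u$ controls Lebesgue measure on the contact set). Once the problem has been reduced to contact points of $u$ lying in the interiors of the subcubes, the inclusion $D\Gamma_u(C\cap Q)\subseteq\partial\Gamma_{u|_Q}(Q)$ is immediate, precisely because the supporting plane $\ell$ already touches $u$ at the interior point $x_0$, so no touching along $\partial Q$ ever has to be considered.
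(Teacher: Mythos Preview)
Your proposal is correct and follows essentially the same route as the paper. The paper simply compresses your measure-theoretic bookkeeping into a single appeal to \lref{mugood} (which already packages \lref{envelopec11} and \lref{envelopefacet}) to conclude $|\partial\Gamma_u(Q_{m+n}\cap\partial Q_m(x))|=0$, and then records your final inclusion as the inequality $|\partial\Gamma_{\widetilde u}(Q_m(x))|\geq|\partial\Gamma_u(Q_m(x))|$ for $\widetilde u:=u|_{\overline{Q_m(x)}}$. Your case distinction on $\sup_x(F(0,x))_+>0$ is harmless but unnecessary: since $\Pu^+_{1,\Lambda}(D^2u)\geq -M$ always implies $\Pu^+_{1,\Lambda}(D^2u)\geq -(M+1)$, one can rescale by $M+1$ regardless.
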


\begin{proof}
Fix $u \in S(Q_{m+n},F)$ and apply \lref{mugood} to get that, for every $x \in Q_{m+n}$, 
\begin{equation*}
|\partial \Gamma_u(Q_{m+n} \cap \partial Q_m(x))| = 0.
\end{equation*}
Thus
\begin{equation*}
|\partial \Gamma_u(Q_{m+n})| = \sum_{\{ Q : Q = Q_m(x) \subseteq Q_{m+n}\}} |\partial \Gamma_u(Q)| = \int_{Q_{m+n}} \frac{|\partial \Gamma_u(Q_m(x))|}{|Q_m|} \,dx.
\end{equation*}
The conclusion \eref{musub} is immediate from this, the definition of~$\mu$ and the observation that, if $\widetilde u$ denotes the restriction of $u$ to $\overline Q_m(x)$, then $\widetilde u \in S(Q_m(x),F)$ and $|\partial \Gamma_{\widetilde u}(Q_m(x))| \geq |\partial \Gamma_u(Q_m(x))|$.
\end{proof}

By Lemma~\ref{l.musub} and stationarity~\eqref{e.stat}, for every $m,n\in \N$,
\begin{equation} \label{e.mono1}
\E\left[ \mu(Q_{m+n},F) \right] \leq \E\left[ \mu(Q_m,F) \right].
\end{equation}
Similarly, Lemma~\ref{l.musub},~\eqref{e.stat} and Jensen's inequality yield, for every $p\geq1$ and $m,n\in \N$,
\begin{equation} \label{e.mono2}
\E\left[ \mu(Q_{m+n},F)^p \right] \leq \E\left[ \mu(Q_m,F)^p \right].
\end{equation}

\begin{lemma}
\label{l.mubalance}
Let $\P$ be a probability measure on $(\Omega,\F)$ satisfying~\eqref{e.Fubb} and~\eqref{e.stat}. Then, for every $m \in \Z$, the map
\begin{equation} \label{e.monot}
s \mapsto \E \bracks{\mu(Q_m,F+s)} \quad\mbox{is nondecreasing.}
\end{equation}
Moreover, there exists $s_0\in [-K_0,K_0]$ such that, for every $s\in\R$,
\begin{equation}
\label{e.sbar}
\left\{
\begin{aligned}
& s > s_0
\implies
\lim_{m \to \infty} \E \bracks{\mu(Q_m,F - s)} \leq \lim_{m \to \infty} \E \bracks{\mu(Q_m,F_* + s)} \\
& s < s_0 
\implies
\lim_{m \to \infty} \E \bracks{\mu(Q_m,F - s)} \geq \lim_{m \to \infty} \E \bracks{\mu(Q_m,F_* + s)}
\end{aligned}
\right.
\end{equation}
\end{lemma}
\begin{proof}
The map~$t\mapsto \E \bracks{ \mu(Q_m,F+t)}$
is nondecreasing by definition. By~\eref{mubound}, we see that $\E \bracks{ \mu(Q_m,F-t)}=0$ for every $t \geq K_0$ and $\E \bracks{ \mu(Q_m,F+t)}\geq c(t-K_0)_+$ for $t>K_0$.  By~\eqref{e.mono1}, we have 
\begin{equation}
\overline{\mu}(F):= \inf_{m\in\N} \E \bracks{ \mu(Q_m,F)} = \lim_{m\to \infty} \E \bracks{ \mu(Q_m,F)}
\end{equation}
and we deduce that $t\mapsto \overline{\mu}(F+t)$ is also nondecreasing and satisfies $\overline{\mu}(F-t) =0$ for $t\geq K_0$ and $\overline{\mu}(F+t) \geq c(t-K_0)_+$. Thus the map 
\begin{equation}
t\mapsto \overline{\mu}(F+t) - \overline{\mu}(F^*-t)
\end{equation}
is also nondecreasing, and it is negative for $t<- K_0$ and positive for $t > K_0$. Therefore there exists $s_0\in [-K_0,K_0]$ such that this map is nonpositive for $t<s_0$ and nonnegative for $t>s_0$. 
This completes the proof. 
\end{proof}

\subsection{The decay of $\mu(Q_m,F)$ for large $m$.}

The following theorem is a quantitative statement concerning the decay of $\mu$. Proving it is the main step in the proof of Theorem~\ref{t.full} and the focus of the next three sections. 

\begin{theorem}
\label{t.mudecay}
Let $\P$ be a probability measure on $(\Omega,\F)$ satisfying~~\eqref{e.Fubb},~\eqref{e.frd} and~\eqref{e.stat}. Then there exists a unique $\bar{s}(\P)\in \R$ and constants $\tau(d,\Lambda) \in (0,1)$ and $C(d,\Lambda) \geq 0$ such that, for every $m\in \N$,
\begin{equation}
\label{e.muvariance}
\E \bracks{ \mu(Q_m,F - \bar{s})^2 + \mu(Q_m,F_*+ \bar{s} )^2} \leq C K_0^{2d} \tau^{m}.
\end{equation}
\end{theorem}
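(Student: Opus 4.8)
The plan is to establish an \emph{iteration inequality} of the form
\[
\E\bigl[\mu(Q_{m+1},F-s)^2 + \mu(Q_{m+1},F_*+s)^2\bigr] \leq (1-\theta)\,\E\bigl[\mu(Q_m,F-s)^2 + \mu(Q_m,F_*+s)^2\bigr] + (\text{small error}),
\]
for some fixed $\theta(d,\Lambda)>0$, and then iterate it. The monotone quantities $\E[\mu(Q_m,F-s)^2]$ and $\E[\mu(Q_m,F_*+s)^2]$ are already nonincreasing in $m$ by~\eqref{e.mono2}, so all that is needed is a quantitative gain at each (dyadic, or rather triadic) step. The constant $s=s(\P)$ is the one produced by Lemma~\ref{l.mubalance}, which balances the limits of $\E[\mu(Q_m,F-s)]$ and $\E[\mu(Q_m,F_*+s)]$; uniqueness of $s$ will follow once we know these limits are $0$, since $s\mapsto\E[\mu(Q_m,F+s)]$ is strictly monotone in the relevant range by Lemma~\ref{l.mubound} and Lemma~\ref{l.mubalance}. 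Throughout we may normalize $K_0$ out by the scaling/shift remarks in Subsection~\ref{ss.pushfor}.

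The heart of the argument is the subadditivity relation of Lemma~\ref{l.musub}, which says that $\mu(Q_{m+n},F)$ is bounded by the \emph{average} of $\mu(Q_m(x),F)$ over the $3^{dn}$ subcubes. Under the finite range of dependence~\eqref{e.frd}, the random variables $\mu(Q_m(x),F)$ for well-separated subcubes are independent, so the average over $Q_{m+n}$ concentrates around its mean with fluctuations of order $3^{-dn/2}$ (by a variance estimate, exploiting that all of these variables are bounded by $2^dK_0^d$ via Lemma~\ref{l.mubound}). Two scenarios can then occur. Either $\E[\mu(Q_m,F-s)]$ is already substantially smaller than the previous generation's value — in which case Jensen plus the boundedness give the desired decay of the second moment directly — or $\mu(Q_{m+n},F-s)$ is, with high probability, close to a \emph{deterministic} positive constant. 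In the latter case we must produce an extra gain, and this is where the geometry enters: the results of Section~\ref{sec.SC} (on super/subsolutions that nearly maximize the curvature $\mu$ being uniformly convex in proportion to $\mu$) force an optimizer $u$ for $\mu(Q_{m+n},F-s)$ to be genuinely uniformly convex, hence to be simultaneously an approximate solution of a Monge–Amp\`ere equation and of $(F-s)(D^2u,x)=0$; but then by the balancing property of $s$ the \emph{same} cube must also nearly optimize $\mu(Q_{m+n},F_*+s)$, i.e.\ $-u$ is nearly optimal for the dual problem, which is incompatible with $u$ being strictly convex unless the common value is small. Quantifying this incompatibility yields the gain $\theta$.

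Iterating the one-step inequality over $m$ then gives $\E[\mu(Q_m,F-s)^2+\mu(Q_m,F_*+s)^2]\le C(1-\theta)^{m/n}K_0^{2d}$, which is the claimed bound with $\tau:=(1-\theta)^{1/n}\in(0,1)$ and a relabeled $C(d,\Lambda)$. (The fixed step size $n=n(d,\Lambda)$ is chosen large enough that the $3^{-dn}$-type concentration error is dominated by half the gain $\theta$.) I expect the main obstacle to be the dichotomy step — precisely, showing that when $\mu(Q_m,F-s)$ fails to decay it must be close to a constant \emph{and} that this constant can be driven down using the uniform convexity of optimizers together with the $s$-balancing; controlling the convex envelopes quantitatively (using Lemmas~\ref{l.envelopec11}, \ref{l.envelopefacet}, \ref{l.mugood}, and the Monge–Amp\`ere-type estimates of Section~\ref{sec.SC}) and tracking how the errors in the finite-range splitting propagate through these geometric estimates is the delicate part. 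The concentration and iteration steps, by contrast, should be routine.
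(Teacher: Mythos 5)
Your sketch captures the broad intuition (finite-range concentration plus the strict-convexity results of Section~\ref{sec.SC}), but the proposed iteration scheme does not match the structure of the actual proof and, as written, has a genuine gap. You propose a single one-step multiplicative inequality $\E[\mu(Q_{m+1})^2 + \mu_*(Q_{m+1})^2] \le (1-\theta)\E[\mu(Q_m)^2+\mu_*(Q_m)^2] + \text{(error)}$ with a \emph{fixed} shift $s$. But the contraction mechanism (Lemma~\ref{l.contract}) does not yield a relative gain proportional to the current second moment: under its hypothesis it yields the \emph{absolute} bound $\E[\mu(Q_{m+n},F+s)^2 + \mu(Q_{m+n},F_*+s)^2] \le s^{2d}$. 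There is no way to turn this directly into a geometric decrease of the second moment without changing $s$. The paper handles this with a two-dimensional iteration: it works with the family of shifts $2^{-k}$, applies a pigeonhole argument (over scales, in blocks of length $n_1$) to find scales $m_k$ where the hypotheses of Lemma~\ref{l.contract} can be verified, and then crucially uses the Pushup Lemma~\ref{l.pushup} --- $\E[\mu(Q_m,F+s)] \ge c s^d$ --- to bound from below the nonincreasing quantities so the pigeonhole closes. Your sketch never invokes Lemma~\ref{l.pushup}, and your claim that ``$\E[\mu(Q_m,F-s)]$ substantially smaller than the previous generation ... gives the desired decay of the second moment directly'' does not follow: a drop in the mean gives, via boundedness, only an additive decrease of the second moment, not a fixed fraction of its current value.

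Two further issues. First, your uniqueness argument relies on ``strict monotonicity'' of $s\mapsto\E[\mu(Q_m,F+s)]$, but that map is merely nondecreasing (and can be identically zero on an interval); the correct uniqueness argument is again via Lemma~\ref{l.pushup}, which shows that any $s'\neq s(\P)$ would force the expectations to stay bounded away from zero. Second, your account of the incompatibility step misdescribes the geometry: the paper does not argue that ``$-u$ nearly optimizes the dual problem and this is incompatible with strict convexity of $u$.'' Rather, in Lemma~\ref{l.contract} one adds the (nearly optimal) supersolution $u$ and the (nearly optimal) $F_*$-supersolution $u_*$, obtaining $w=u+u_*$ which is strictly positive on $\partial Q_n$ by Corollary~\ref{c.convex}; one then compares with $\widetilde w = v + v_*$ where $v, v_*$ solve the Dirichlet problems with zero boundary data (hence $\widetilde w\le 0$ by the maximum principle). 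The contradiction is obtained by applying the Harnack inequality to the difference $v-u$, which solves $F+s=0$, forcing $a^{1/d}+a_*^{1/d}\lesssim s$. This is a different geometric mechanism from the one you describe.
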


Once we prove Theorem~\ref{t.mudecay}, we use a classical concentration-type argument, using the finite range of dependence assumption a second time, to improve our control over the fluctuations of~$\mu$. The argument is given at the end of Section~\ref{S.mudecay}.

\begin{corollary}
\label{c.mudeviation}
Under the assumptions of Theorem~\ref{t.mudecay}, for every $p< d$, there exist $\alpha(p,d,\Lambda)>0$ and $c(d,\Lambda)> 0$ such that, for every $m \in \N$ and $t\geq1$,
\begin{equation*}
\P\bracks{\mu(Q_{m},F - \bar{s}) \geq   K_0^d 3^{-m\alpha}  t } \leq \exp\parens{-c t 3^{mp} }
\end{equation*}
and
\begin{equation*}
\P\bracks{\mu(Q_{m},F_* + \bar{s}) \geq K_0^d 3^{-m\alpha}  t } \leq \exp\parens{-c t 3^{mp} },
\end{equation*}
where $\bar{s}(\P) \in \R$ is as in \tref{mudecay}. Moreover, there exists $c_0(d,\Lambda)>0$ such that $\alpha(p,d,\Lambda) \geq c_0(d-p)$. 
\end{corollary}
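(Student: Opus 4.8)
The plan is to derive the exponential concentration estimate from the variance bound \eqref{e.muvariance} of Theorem~\ref{t.mudecay} together with a second use of the finite range of dependence \eqref{e.frd}. The key structural fact is that $\mu(Q_{m+n},F-s)$ is controlled, via the subadditivity Lemma~\ref{l.musub}, by an \emph{average} over the $3^{dn}$ subcubes $Q_m(x) \subseteq Q_{m+n}$ of the quantities $\mu(Q_m(x),F-s)$, and these quantities are bounded (by Lemma~\ref{l.mubound}, $0 \le \mu(Q_m(x),F-s) \le 2^d(K_0+|s|)^d \le CK_0^d$, using $|s|\le K_0$ from Lemma~\ref{l.mubalance}), and — crucially — \emph{almost independent}: by \eqref{e.FU} the random variable $\mu(Q_m(x),F-s)$ is $\F(Q_m(x))$-measurable, so if two cubes $Q_m(x), Q_m(y)$ are separated by distance at least $1$ then $\mu(Q_m(x),F-s)$ and $\mu(Q_m(y),F-s)$ are independent. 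Hence if we fix $m$ large enough that $3^m\ge 2$ (so that any two distinct same-size triadic cubes at distance $\ge 3^m$ apart are in particular at distance $\ge 1$... but adjacent cubes are only separated by $0$), we must instead partition the $3^{dn}$ subcubes into $O(1)$ families, each consisting of well-separated cubes, by a standard coloring argument: group the cubes $Q_m(x)\subseteq Q_{m+n}$ according to the residue of their index $k\in\{0,\dots,3^n-1\}^d$ modulo $2$ in each coordinate, giving $2^d$ families; within each family any two distinct cubes are separated by at least $3^m\ge 1$, hence independent.

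The steps, in order. \emph{Step 1.} Fix $p<d$ and choose $n=n(m)$ so that $3^{dn}\approx 3^{m(d-p)/\,?}$ — more precisely, set the target exponent by optimizing below; for now write $N:=3^{dn}$ for the number of subcubes of $Q_m$ of sidelength $3^{m-n}$ (relabeling, it is cleaner to bound $\mu(Q_m,F-s)$ by an average of $\mu$ over subcubes of sidelength $3^\ell$ for a suitable $\ell<m$ chosen below). \emph{Step 2.} By Lemma~\ref{l.musub} and stationarity, $\mu(Q_m,F-s)\le 2^{-d}\sum_{j=1}^{2^d}\big(\text{average of }\mu(Q_\ell(x),F-s)\text{ over the }j\text{-th color family}\big)$, so it suffices to bound the deviation of each such average. \emph{Step 3.} Each color family consists of $\ge c\,3^{d(m-\ell)}$ independent, bounded-by-$CK_0^d$ random variables with common mean $\rho_\ell(-s,F):=\E[\mu(Q_\ell,F-s)]$, which by Theorem~\ref{t.mudecay} (Cauchy–Schwarz, $\rho_\ell\le \big(\E[\mu^2]\big)^{1/2}$) satisfies $\rho_\ell(-s,F)\le CK_0^d\tau^{\ell/2}=CK_0^d 3^{-c_1\ell}$ where $3^{-c_1}:=\tau^{1/2}$, i.e. $c_1=c_1(d,\Lambda)>0$. \emph{Step 4.} Apply Hoeffding's inequality (or Azuma, or just the Chernoff bound for bounded i.i.d. sums) to the color-family average $\bar\mu_j$: for $r>0$,
\begin{equation*}
\P\big[\bar\mu_j \ge \rho_\ell(-s,F)+r\big]\le \exp\!\Big(-c\,\frac{r^2\,3^{d(m-\ell)}}{K_0^{2d}}\Big).
\end{equation*}
\emph{Step 5.} Choose $\ell=\ell(m,p,d)$ so that $3^{d(m-\ell)}\approx 3^{mp}$, i.e. $\ell = m(1-p/d)$ (rounded), which is a valid integer in $\{0,\dots,m\}$ since $0<p<d$; then $3^{-c_1\ell}=3^{-c_1(1-p/d)m}=:3^{-\alpha m}$ with $\alpha=\alpha(p,d,\Lambda):=c_1(1-p/d)$, and $\alpha\ge c_0(d-p)$ with $c_0:=c_1/d$. \emph{Step 6.} For $t\ge 1$, set $r=\tfrac12 K_0^d 3^{-\alpha m}t$; then $\rho_\ell(-s,F)+r\le CK_0^d 3^{-\alpha' m}+\tfrac12 K_0^d3^{-\alpha m}t$; after possibly shrinking $\alpha$ slightly (so that the mean term is dominated by the deviation term, e.g. replace $\alpha$ by $\min(\alpha,\alpha')$ and absorb constants, adjusting $c_0$), we get $\rho_\ell(-s,F)+r\le K_0^d 3^{-\alpha m}t$ and the exponent becomes $-c\,r^2 3^{d(m-\ell)}/K_0^{2d}\le -c\,t^2 3^{-2\alpha m}3^{mp}\le -c\,t\,3^{mp}$ (using $t\ge1$ and, after a final harmless decrease of $\alpha$, absorbing the factor $3^{-2\alpha m}\le 1$; one keeps the clean form $-ct3^{mp}$ as stated). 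Summing over the $2^d=O(1)$ color families and over the two statements ($F-s$ and $F_*+s$, the latter handled identically using that $F\mapsto F_*$ pushes $\P$ forward to a measure with the same hypotheses, cf. Subsection~\ref{ss.pushfor}, and $\mu(Q_\ell,F_*+s)=\mu_*(Q_\ell,F+s)$... with $s$ as in Theorem~\ref{t.mudecay}) gives the claimed bounds after adjusting $c$.

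\emph{The main obstacle} I anticipate is bookkeeping rather than conceptual: reconciling the \emph{two} competing requirements on $\ell$ — we need $3^{d(m-\ell)}$ large (of order $3^{mp}$) to make the Hoeffding exponent of order $3^{mp}$, but we also need $\ell$ large (close to $m$) to make the mean $\rho_\ell\le CK_0^d\tau^{\ell/2}$ small, of order $3^{-\alpha m}$; the choice $\ell=m(1-p/d)$ balances them, and it is exactly here that the constraint $p<d$ and the final bound $\alpha\ge c_0(d-p)$ materialize. Secondary care is needed to confirm that the color-family decomposition indeed yields genuinely independent (not merely pairwise independent) collections — this follows because $\F(Q_\ell(x_1)),\dots,\F(Q_\ell(x_k))$ for pairwise-separated cubes are mutually independent by iterating \eqref{e.frd} (each $\F(Q_\ell(x_i))$ is independent of the $\sigma$-algebra generated by the union of the others, since that union is contained in $\F(V)$ for $V$ the union of the other cubes, at distance $\ge1$ from $Q_\ell(x_i)$) — so Hoeffding applies directly. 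Finally one must verify the lower-bound side $\mu\ge0$ is automatic so only the upper tail is needed, and that all constants depend only on $d,\Lambda$ (the $K_0$-dependence being explicit in the exponent-free prefactor).
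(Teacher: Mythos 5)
Your overall strategy --- subadditivity (\lref{musub}) to decompose $\mu(Q_m,F-s)$ as an average over level-$\ell$ subcubes, a coloring of the subcubes into $O(1)$ families of pairwise $1$-separated (hence mutually independent, by \eqref{e.frd} and \eqref{e.mumeas}) cubes, a concentration inequality for each family, and a balance between the mean bound $\rho_\ell \leq C K_0^d\tau^{\ell/2}$ from \tref{mudecay} and the number $3^{d(m-\ell)}$ of subcubes --- is the right circle of ideas and structurally parallels the paper's proof. The paper uses the triadic decomposition to produce $3^d$ families (subcubes $Q_n^{j,i}$ occupying a fixed position $i$ inside each parent cube $Q_{n+1}^j$), while your $2^d$ checkerboard coloring works equally well; and the paper works directly with the moment generating function plus the elementary inequality $\exp(s)\le 1+2s$ (valid for $0\le s\le 1$), whereas you invoke Hoeffding.

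There is, however, a genuine gap in Step~6, and it is precisely at the point you flag as ``bookkeeping.'' Your Hoeffding exponent is $-c\,r^2\,3^{d(m-\ell)}/K_0^{2d}$ with $r = \tfrac12 K_0^d 3^{-\alpha m}t$, which equals $-c\,t^2\,3^{-2\alpha m}\,3^{d(m-\ell)}$. With your choice $\ell = m(1-p/d)$ this is $-c\,t^2\,3^{mp-2\alpha m}$, and the claimed inequality $-c\,t^2\,3^{mp-2\alpha m} \le -c\,t\,3^{mp}$ fails for $t$ near $1$ as soon as $m$ is large, since $3^{-2\alpha m}\to 0$ and ``absorbing the factor $3^{-2\alpha m}\le 1$'' makes the exponent \emph{weaker}, not stronger, no matter how you shrink $\alpha>0$. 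The choice $\ell=m(1-p/d)$ only balances the number of subcubes against $3^{mp}$; it ignores the $r^2$ dependence, which is the heart of the constraint. The repair is to choose $\ell$ and $\alpha$ jointly: one needs simultaneously $d(m-\ell) \ge mp + 2\alpha m$ (so the Hoeffding exponent is $\ge c\,t^2 3^{mp}$) and $c_1\ell \ge \alpha m + O(1)$ (so the mean is $\le \tfrac12 K_0^d 3^{-\alpha m}$, with $3^{-c_1}:=\tau^{1/2}$). These are compatible iff $\alpha \le c_1(d-p)/(d+2c_1)$, which you may then take as your $\alpha$; it still satisfies $\alpha\ge c_0(d-p)$ with $c_0 = c_1/(d+2c_1)$, so the stated conclusion is recoverable. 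With this fix the argument closes (indeed yields the stronger $\exp(-c\,t^2 3^{mp})$ for $t\ge1$), but as written the choice of $\ell$ is wrong and Step~6 does not go through.

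It is worth noting why the paper avoids this headache: the $\exp(s)\le 1+2s$ trick gives a Bennett/Poisson-type tail, which scales with the \emph{mean} of the individual $\mu(Q_n^{j,i},F)$ (itself $O(\tau^n)$ by \tref{mudecay}) rather than with their uniform \emph{bound} $CK_0^d$. Hoeffding, which sees only the bound, is wasteful here because it cannot exploit the smallness of the mean; this is exactly what forces the awkward re-coupling of $\ell$ and $\alpha$ in your approach. A Bernstein-type inequality (using both the bound $CK_0^d$ and the variance $\le CK_0^{2d}\tau^\ell$) would also avoid the issue and more closely matches what the paper's MGF computation achieves.
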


\subsection{Identification of the effective equation}
The constant $\bar{s}(\P)$ in Theorem~\ref{t.mudecay} and Corollary~\ref{c.mudeviation} is nothing other than $\overline F(0)$. In fact, we make this the definition of $\overline F$. To get $\overline F(A)$ for general $A\in \Sy$, we apply Theorem~\ref{t.mudecay} to the pushforward $\P_A$ of $\P$ under the map $A\mapsto F_A$ (defined in~\eqref{e.Ftrans} above):
\begin{equation}\label{e.Fbar}
\overline F(A):= \bar{s}(\P_A), \quad \mbox{where $\bar{s}(\P)\in\R$ is the constant in Theorem~\ref{t.mudecay}.}
\end{equation}

To keep our presentation self-contained, we summarize some basic properties of~$\overline F$. First, we see from Lemma~\ref{l.mubalance} that $\left|\overline F(0)\right| \leq K_0$. Uniform ellipticity is inherited from~\eqref{e.Fue} and the monotonicity of $\bar{s}(\P)$ in $\P$. To see this, fix $A,B\in \Sy$ and define a map $\zeta:\Omega\to \Omega$ by
\begin{equation*}\label{}
\zeta(F)(M,x):= \Pu^-_{1,\Lambda}(A-B) + F(M+B,x).
\end{equation*}
According to~\eqref{e.Fue},
\begin{equation*}\label{}
\zeta(F)(M,x) \leq F(A+M,x) = F_A(M,x)
\end{equation*}
and it follows immediately that $\mu(U,\zeta(F)) \leq \mu(U,F_A)$ and $\mu_*(U,\zeta(F)) \geq \mu_*(U,F_A)$ for all $U\subseteq \Rd$, and hence from~\eqref{e.monot}, we have
\begin{equation*}\label{}
\bar{s}(\zeta_{\#}\P) \leq \bar{s}(\P_A) = \overline F(A).
\end{equation*}
On the other hand, since $\zeta(F) - F_B \equiv \Pu^-_{1,\Lambda}(A-B)$ which is a constant, we have
\begin{equation*}\label{}
\bar{s}(\zeta_{\#}\P) = \Pu_{1,\Lambda}^-(A-B)+\overline F(B).
\end{equation*}
We obtain $\Pu_{1,\Lambda}^-(A-B) \leq \overline F(A) - \overline F(B)$. Therefore $\overline F \in \overline\Omega(\Lambda)$.

By similar arguments, we find that $\overline F$ has properties such as positive homogeneity, convexity/concavity, linearity, oddness, etc, provided that $A\mapsto F(A,x)$ has the same property for every $x\in \Rd$ and $\P$--almost surely. Uncovering qualitative properties of $\overline F$ from \emph{averaged} information about $\P$ is more interesting but much more difficult, and little is currently known (although see the estimate for the effective ellipticity in ~\cite{AS1}).

\section{Strict convexity of quasi-maximizers}
\label{sec.SC}

This section contains only deterministic results, so we fix $F \in \Omega$ throughout.

We begin with an assertion concerning the strict convexity of any convex function $w \in C(Q_0)$ which has a subdifferential map $\partial w$ that is uniformly bounded below on small scales, in the sense that, for some suitable small~$n\in \Z$ ($n\ll 0$) and every $x\in Q_0$, we have $|\partial w(Q_{n}(x))| \geq c |Q_{n}|$. The conclusion is that the graph of $w$ must either curve in all directions at an appropriate rate or else bend extremely rapidly away from a hyperplane.

This result is a quantitative version of an idea that has appeared several times in the regularity theory of the Monge-Amp\`ere equation: see for instance~Caffarelli~\cite{C1} and especially the recent preprint of Mooney~\cite[Lemma~2.2]{M}. This connection can be formally motivated by the fact that, for a convex $\varphi\in C^2$ and $n \ll 0$,
\begin{equation*}\label{}
\det D^2\varphi(x) \approx \fint_{Q_{n}(x)} \det D^2\varphi (y) \, dy = \frac{\left| \partial \varphi(Q_{n}) \right|}{\left|Q_{n}\right|}.
\end{equation*}

\begin{lemma}
\label{l.mooney}
There exist $c(d), h(d) > 0$ such that, for every $0<r<1$, $n\in \Z$ such that $3^{n} \leq cr$ and convex function $w \in C(\overline Q_0)$ satisfying 
\begin{equation}
\label{e.mohyp}
\inf_{Q_0} w = \inf_{Q_n} w = 0 \quad \mbox{and} \quad \inf_{x\in Q_0} \frac{|\partial w(Q_n(x))|}{|Q_n|} \geq 1,
\end{equation}
at least one of the following holds: either
\begin{equation}
\label{e.mogrow}
w \geq h r^{2 - 2/d} \quad \mbox{on } \partial Q_0
\end{equation}
or else there exists $e \in \R^d$ with $|e| = 1$ such that 
\begin{equation}
\label{e.mobend}
w \geq h r^{2-2/d} \quad \mbox{on } \{ x \in \overline Q_0 : |e \cdot x| \geq r \}.
\end{equation}
\end{lemma}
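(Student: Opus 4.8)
The statement is essentially a quantitative form of the following dichotomy: a convex function whose Monge–Amp\`ere mass is bounded below on the small scale $3^n$ cannot be too flat, and any flatness it does exhibit is confined to at most a slab in one direction. I would argue by contradiction, assuming both \eqref{e.mogrow} and \eqref{e.mobend} fail, i.e. for \emph{every} unit vector $e$ there is a point of $\overline Q_0$ with $|e\cdot x|\ge r$ where $w< h r^{2-2/d}$, and also $w< h r^{2-2/d}$ somewhere on $\partial Q_0$ — more precisely, I would first reduce to controlling the sublevel set $\{w < h r^{2-2/d}\}$ and show it cannot be ``small in every direction.'' The quantitative engine is the classical lower volume bound for sections of convex functions via their Monge–Amp\`ere measure: if $w$ is convex with $w(x_0)=\inf w = 0$ and $S:=\{w< t\}$, then $|\partial w(S)|\le C(d)\, t^{d}/|S|^{?}$ — the sharp relation comes from John's ellipsoid. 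Concretely, let $E$ be John's ellipsoid of $S$, so $E\subseteq S\subseteq d\, E$; then by convexity $\partial w(S) \supseteq \partial w(E)$ contains (a dilate of) the polar-type set determined by how much $w$ grows across $E$, and one gets $|\partial w(S)|\,|S| \le C(d)\, t^d$ up to constants (this is the standard ``$|S^*_t|\,|S_t|\simeq t^d$'' estimate for Monge–Amp\`ere sections).

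\textbf{Main steps.} First I would fix $t := h r^{2-2/d}$ and let $S:=\{x\in \overline Q_0 : w(x)<t\}$, noting $\inf_{Q_n} w = 0$ gives a minimum point $x_0\in \overline Q_n$, so $S$ is a nonempty (relatively) convex neighborhood of $x_0$. Second, from the hypothesis $|\partial w(Q_n(x))|\ge |Q_n|$ for every $x\in Q_0$, together with Lemma~\ref{l.musub}-type additivity of $|\partial w(\cdot)|$ over disjoint subcubes (subdifferentials of a convex function have Lebesgue-null overlap on cube boundaries), I would get the global lower bound $|\partial w(V)| \ge |V|$ for any $V$ that is a union of our triadic $Q_n$-cubes, and hence, up to losing a constant, $|\partial w(S)| \ge c(d)|S|$ provided $S$ contains at least one full cube $Q_n(x)$ — which holds once $r$ is not too small relative to $3^n$, exactly the condition $3^n\le cr$. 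Third, I would apply the section volume estimate $|\partial w(S)|\cdot |S| \le C(d)\, t^d$ to conclude $|S|^2 \le C(d)\, t^d$, i.e. $|S|\le C(d)\, t^{d/2} = C(d)\, h^{d/2} r^{d-1}$. Fourth — the decisive geometric step — I would invoke John's ellipsoid of $S$: writing its semiaxes as $\lambda_1\le\cdots\le\lambda_d$, we have $\prod \lambda_i \simeq |S| \le C h^{d/2} r^{d-1}$. Since failure of \eqref{e.mobend} for the direction $e$ aligned with the longest axis forces $S$ (hence $d\,E$) to reach $|e\cdot x|\ge r$ would be contradicted — rather, failure of \eqref{e.mobend} for \emph{all} $e$ means $S$ is not contained in any slab $\{|e\cdot x|<r\}$, which forces $\lambda_{d-1}\gtrsim r$ as well, and failure of \eqref{e.mogrow} pushes $\lambda_d\gtrsim r$ (otherwise $S$ would be contained in $Q_0$-interior with room to spare and a barrier/ABP argument would force growth on $\partial Q_0$). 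Hence $\prod\lambda_i \gtrsim \lambda_{d-1}\lambda_d \cdot \prod_{i\le d-2}\lambda_i \ge c\, r^2 \cdot (\text{something}\gtrsim r^{d-2})$ after also bounding the remaining axes below using that $|\partial w(S)|\ge c|S|$ prevents degeneracy — giving $|S|\gtrsim r^{d}$, contradicting $|S|\le C h^{d/2} r^{d-1}$ once $h$ is chosen small depending only on $d$ and $r<1$.

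\textbf{Where the real work is.} The cleanest route, and the one I expect the paper takes, is to avoid the slightly delicate ``all $d$ axes'' bookkeeping above by a symmetrization/one-direction argument: reduce to showing that \emph{if} $S\subseteq\{|e_d\cdot x|<r\}$ for the shortest-slab direction \emph{and} $w$ stays below $t$ somewhere on $\partial Q_0$, one can slide a small paraboloid/cone barrier of height $\sim t$ and width $\sim r$ to conclude $|\partial w(S)|$ is forced \emph{small}, namely $|\partial w(S)|\le C t/r \cdot (\text{lower-dim volume})$, directly contradicting $|\partial w(S)|\ge c|S| \ge c\,|Q_n|\cdot(\#\text{cubes})$. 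So the main obstacle is the geometric measure-theoretic step bounding $|\partial w(S)|$ from \emph{above} in terms of $t$ and the width of $S$ in a distinguished direction — this is precisely the Monge–Amp\`ere ``section measure vs.\ section width'' estimate (the content of Caffarelli's and Mooney's lemmas cited), and making it quantitative with explicit dependence $t = hr^{2-2/d}$ is the crux. Everything else — the additivity of $|\partial w(\cdot)|$ over the triadic cubes, the normalization $\inf_{Q_0}w=\inf_{Q_n}w=0$, and the choice of $c,h$ depending only on $d$ — is bookkeeping. I would also need to be a little careful that $S$ is taken relative to $\overline Q_0$ (so ``$w<t$ on $\partial Q_0$ fails'' genuinely means the sublevel set touches $\partial Q_0$), and that John's ellipsoid is applied to the convex set $S$, not to a non-convex truncation.
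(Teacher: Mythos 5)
Your plan has the same skeleton as the paper's argument: work with the sublevel set $S := \{w < t\}$ for $t := h r^{2-2/d}$, normalize by John's ellipsoid, get a lower bound on subdifferential mass from the hypothesis, get an upper bound on subdifferential mass from the smallness of $t$, and squeeze out a lower bound on $h$. Where the proposal goes wrong is the central upper bound, which you present as a standard fact.

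\textbf{The gap.} You invoke $|\partial w(S)|\cdot |S|\le C(d)\,t^d$ as a classical "section volume estimate." The classical fact is the \emph{lower} bound $|\partial w(S)|\cdot|S|\ge c(d)\,t^d$ (the ABP cone argument). The matching upper bound on the \emph{whole} section is false in general, and, what matters here, it is false even under the lemma's hypothesis $|\partial w(Q_n(x))|\ge |Q_n|$. Take for example, with $d=2$,
\begin{equation*}
w(x_1,x_2)=\tfrac12(x_1^2+x_2^2)+M\max\left\{0,|x_2|-\delta\right\}
\end{equation*}
with $\delta$ small and fixed and $M$ huge. Then $\det D^2 w\ge 1$ in the Alexandrov sense (so the subdifferential hypothesis holds), but for $t$ slightly above $\delta^2/2$ the section $S_t$ has $|S_t|\approx\pi\delta^2$ while $\partial w(S_t)$ includes two strips of total measure $\approx 4M\sqrt{2t-\delta^2}$ coming from the kink along $\{|x_2|=\delta\}\cap S_t$, which lies well in the \emph{interior} of $S_t$. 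Thus $|\partial w(S_t)|\cdot|S_t|$ can be made arbitrarily large relative to $t^d$ by sending $M\to\infty$, so Step~3 of your argument does not hold and $|S|\le C h^{d/2} r^{d-1}$ does not follow. (The lemma's conclusion is of course still true in this example --- the first alternative holds trivially --- but your intermediate inequality is not.) The difficulty is precisely that a convex function whose Monge--Amp\`ere measure is bounded \emph{below} may still have large singular subdifferential mass on a lower-dimensional set inside the section, and this concentration invalidates any naive upper bound for $|\partial w(S)|$ in terms of $t$ and $|S|$ alone.

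\textbf{How the paper avoids it.} Instead of bounding $|\partial w(S)|$, the paper's proof bounds $|\partial w(E)|$ for the strictly interior ellipsoid $E:=\phi^{-1}(B_{1/2})$, where $\phi$ is the John affine map normalizing $S$ to $\overline{B_1}\subseteq\phi(S)\subseteq\overline{B_d}$. The key point is that, in the normalized picture, one has $0\le\widetilde w\le t$ on all of $B_1\supset B_{1/2}$, and therefore the elementary Lipschitz-type bound $|p|\le Ct$ for every $p\in\partial\widetilde w(B_{1/2})$ holds. This yields $|\partial\widetilde w(B_{1/2})|\le Ct^d$ with no need for any sectional-measure axiom; the $\frac12$-shrinkage is exactly what protects against the boundary/interior concentration phenomenon above. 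Combined with the eigenvalue bounds on the John map (established in Step~1 of the paper: failure of~\eqref{e.mobend} gives $\lambda_{\max}(A)\le Cr^{-1}$, failure of~\eqref{e.mogrow} gives $\lambda_{\min}(A)\le C$, hence $\det A\le Cr^{1-d}$), one transfers back and gets $|\partial w(E)|\le Ch^d|E|$. Comparing with the lower bound $|\partial w(\widetilde E_n)|\ge|\widetilde E_n|\ge\tfrac12|E|$ (the union of $Q_n$-cubes inside $E$, using that $3^n\le cr$ so these cubes cover half of $E$) yields $1\le Ch^d$.

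Your eigenvalue bookkeeping is also slightly off --- failure of~\eqref{e.mobend} forces the \emph{smallest} semiaxis of the John ellipsoid to be $\gtrsim r$ (hence all of them, giving no extra information about $\lambda_{d-1}$ specifically), and failure of~\eqref{e.mogrow} forces the \emph{largest} to be $\gtrsim 1$, not $\gtrsim r$ --- but this is a fixable miscount, not the real obstruction. The real obstruction is the unproved and in fact false upper sectional estimate; replacing it with the interior-ellipsoid gradient bound is the essential idea you were missing.
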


\begin{proof}
We argue by the contrapositive: assuming that both \eref{mogrow} and \eref{mobend} fail for fixed $h,r>0$ and $n\in\Z$, with $3^{-n}r$ sufficiently large depending on $d$, we derive a lower bound on $h$. Throughout the proof, $C$ and $c$ denote positive constants which depend only on $d$ and may differ in each occurrence.

We introduce the closed convex set
\begin{equation*}
S := \{ x \in \overline Q_0 : w(x) \leq h r^{2 - 2/d} \},
\end{equation*}
which has nonempty interior by~\eqref{e.mohyp}. According to John's lemma~\cite{J}, there exists an invertible, orientation-preserving affine map $\phi:\Rd\to \Rd$ such that 
\begin{equation}\label{e.mojohn}
\overline B_1 \subseteq \phi(S) \subseteq \overline B_{d}.
\end{equation}
We may write~$\phi(y) = A(y - x_0)$ for a positive definite matrix~$A \in \Sy$ and~$x_0\in S$. 

\emph{Step 1.} We show that
\begin{equation}
\label{e.moAbound}
\lambda_{\mathrm{max}}(A) \leq  C r^{-1}, \quad  \lambda_{\mathrm{min}}(A) \leq C, \quad \mbox{and} \quad \det A \leq C r^{1 - d}.
\end{equation}
Here $\lambda_{\mathrm{max}}(A)$ and $\lambda_{\mathrm{min}}(A)$ denote the largest and smallest eigenvalues of $A$, respectively. Select $e\in \Rd$ with $|e|=1$ such that $Ae=\lambda_{\mathrm{max}}(A)e$. By the first hypothesis of~\eqref{e.mohyp}, there exists~$x_1\in \overline Q_n \cap S$. Thus
\begin{equation}\label{e.anchor}
 |Ax_0|  = |\phi(x_1) - Ax_1| \leq d + \lambda_{\mathrm{max}}(A)|x_1| \leq C \left(1 + 3^{n}\lambda_{\mathrm{max}}(A)\right).
\end{equation}
Using this, we find
\begin{align*}\label{}
S \subseteq \phi^{-1}(\overline B_d) & = \left\{ x \in \Rd \,:\, \left| A(x-x_0) \right| \leq d \right\}  \\
& \subseteq \left\{ x \in \Rd \,:\, \left| Ax \right| \leq C(1+ 3^{n}\lambda_{\mathrm{max}}(A)) \right\} \\
& \subseteq \left\{ x \in \Rd \,:\, |e\cdot Ax|  \leq C(1+ 3^{n}\lambda_{\mathrm{max}}(A))  \right\} \\
& = \left\{ x \in \Rd \,:\, |e\cdot x|  \leq C(\lambda^{-1}_{\mathrm{max}}(A) +3^{n} )  \right\}.
\end{align*}
If $-n$ sufficiently large that $C3^{n}\leq\frac12r$, then we obtain 
\begin{equation*}\label{}
S \subseteq \left\{ x \in \Rd \,:\, |e\cdot x|  \leq C\lambda^{-1}_{\mathrm{max}}(A) + \tfrac 12r  \right\}.
\end{equation*}
This contradicts the assumed failure of~\eqref{e.mobend} unless $\lambda_{\mathrm{max}}(A) \leq C r^{-1}$, which proves the first estimate of~\eqref{e.moAbound}. 

To prove the second estimate of~\eqref{e.moAbound}, we observe that, due to the assumed failure of \eref{mogrow}, there exists~$x_2 \in \partial Q_0 \cap S$ and we find that
\begin{equation*}
(x_1-x_2) \cdot A(x_1-x_2) = (x_1-x_2)\cdot \left( \phi(x_1) - \phi(x_2) \right) \leq |x_1-x_2| \cdot 2d.
\end{equation*}
Since $|x_1 - x_2| \geq \frac12 - C 3^{-n}  \geq \tfrac{1}{4}$, the normalized vector $y:= (x_1-x_2)/|x_1-x_2|$ satisfies $y \cdot A y \leq 8d \leq C$. Hence $\lambda_{\mathrm{min}}(A) \leq C$. 

Finally, we note that the third estimate of~\eqref{e.moAbound} is a consequence of the first two, since $\det A \leq (\lambda_{\mathrm{max}}(A))^{d-1} \lambda_{\mathrm{min}}(A)$.

\emph{Step 2.} We prove the estimate
\begin{equation}
\label{e.moEbound}
|\partial w(E)| \leq C h^d |E|,
\end{equation}
where we have defined the ellipsoid
\begin{equation*}
E : = \phi^{-1}(B_{1/2}) \subseteq S \subseteq \overline Q_0. 
\end{equation*}
Consider the change of variables $\widetilde w(x) := w(\phi^{-1}(x))$. Observe that 
\begin{equation} \label{e.cngesubs}
\partial w(E) =  \phi\!\left( \partial \widetilde w(B_{1/2}) \right).
\end{equation}
By $w\geq 0$, the first inclusion in \eref{mojohn} and the definition of $S$, we have
\begin{equation*}
0 \leq \widetilde w \leq h r^{2-2/d} \quad \mbox{in } B_1.
\end{equation*}
This implies that
\begin{equation*}
|p| \leq  C h r^{2-2/d} \quad \mbox{for every} \ p \in \partial \widetilde w(B_{1/2}).
\end{equation*}
In particular,
\begin{equation*}
|\partial \widetilde w(B_{1/2})| \leq \left| B_{Chr^{2-2/d}}\right| = C h^d r^{2d - 2}.
\end{equation*}
Using this and \eref{moAbound} and~\eqref{e.cngesubs} we reverse the change of variables to obtain
\begin{multline*}
|\partial w(E)| = \left| \phi\!\left( \partial \widetilde w(B_{1/2}) \right) \right| = (\det A) \left|\partial \widetilde w(B_{1/2})\right| \leq C h^d (\det A)^{-1} = Ch^d|E|.
\end{multline*}

\medskip

{\noindent \em Step 3.} We complete the argument, deriving a lower bound on $h$. Consider the set
\begin{equation*}
\widetilde E_n := \{ x \in E \,:\, Q_n(x) \subseteq E \}.
\end{equation*}
Note that $\overline B_{cr} \subseteq E$ by~\eqref{e.moAbound}. Since $E$ is an ellipsoid, it follows that
\begin{equation*}\label{}
\left| \left\{ x\in E\,:\, \dist(x,\partial E) > cr \right\} \right| \geq \tfrac12 |E|.
\end{equation*}
Therefore, provided that $3^{n}\leq cr$, we have
\begin{equation*}
|\widetilde E_n| \geq \tfrac{1}{2} |E|.
\end{equation*}
Combining this with \eref{moEbound}, we obtain
\begin{equation*}
|\partial w(\widetilde E_n)| \leq |\partial w(E)| \leq C h^d |E| \leq C h^d |\widetilde E_n|.
\end{equation*}
Since $\widetilde E_n \subseteq Q_0$ is a union of level $n$ triadic cubes, the hypothesis \eref{mohyp} gives
\begin{equation*}
|\partial w(\widetilde E_n)| = \int_{\widetilde E_n} \frac{|\partial w(Q_n(x))|}{|Q_n|} \,dx \geq |\widetilde E_n|.
\end{equation*}
Combining the above two strings of inequalities, we obtain $1 \leq C h^d$.
\end{proof}

The exponent $2-2/d$ in \lref{mooney} is sharp, even for a smooth convex function~$w$ satisfying the pointwise bound $\det D^2 w \geq 1$, as we see from the following one-parameter family:
\begin{equation*}
w_r (x_1,\ldots,x_d):= \tfrac{1}{2} r^{2-2/d} x_1^2 + \tfrac{1}{2} r^{-2/d} (x_2^2 + \cdots + x_d^2), \quad r>0.
\end{equation*}

We intend to apply Lemma~\ref{l.mooney} to the convex envelope of a function $u\in S(Q_0,F)$ which nearly achieves the supremum in the definition of $\mu(Q_0,F)$, with the hope of obtaining the first alternative. To this end, we require the following lemma, which will allow us to rule out the second alternative. It roughly states that, if $u\in S(Q_0,F)$ grows quickly away from a hyperplane, then there is a smaller-scale cube $Q_n(x)\subseteq Q_0$ such that $\mu(Q_n(x),F)$ is relatively large.

\begin{lemma}
\label{l.valley}
There exist $c(d)>0$ and $h(d,\Lambda) > 1$ such that, if $0 < r < \tfrac14 h^{-1/2}$, $n\in \Z$ such that $3^{n}\leq cr$ and $u \in S(Q_0,F)$ satisfy, for some $e \in \R^d$ with $|e| = 1$, 
\begin{equation}
\label{e.vagrow}
\inf_{Q_0} u = \inf_{Q_n} u = 0 \quad \mbox{and} \quad u \geq h r^2 \mbox{ on } \{ x \in Q_0 : |e \cdot x| \geq r \},
\end{equation}
then there exists $x_0\in Q_0$ such that 
\begin{equation}
\label{e.vamu}
\mu(Q_n(x_0),F) \geq 2.
\end{equation}
\end{lemma}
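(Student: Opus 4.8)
The plan is to reduce to a lower bound for $\mu$ on a single cube of side comparable to $r$, and then to extract that bound from the curvature that the ``valley'' of $u$ is forced to carry.

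\emph{Reduction.} Include $h\geq 9$ among the requirements on $h(d,\Lambda)$, so that $r<\tfrac14h^{-1/2}<\tfrac1{12}$, and fix $j\in\Z$ with $2r\leq 3^{j}<6r$ $(\leq 1)$. Then $Q_{j}\subseteq Q_{0}$; the point at which $u$ vanishes lies within $C(d)3^{n}$ of the origin, hence in $\overline{Q_{n}}\subseteq Q_{j}$; and $\overline{Q_{j}}$ has a corner $\xi$ with $e\cdot\xi=\tfrac12 3^{j}\|e\|_{1}\geq r$, so $u(\xi)\geq hr^{2}$. Since $n<j$, \lref{musub} gives $\mu(Q_{j},F)\leq\fint_{Q_{j}}\mu(Q_{n}(x),F)\,dx$, and $\mu(Q_{n}(\cdot),F)$ is constant on each level-$n$ cube, so it suffices to prove
\begin{equation*}
\mu(Q_{j},F)\geq 2 .
\end{equation*}

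\emph{The valley forces $F(0,\cdot)$ to be large.} Take $u\in S(Q_{j},F)$ and let $\Gamma:=\Gamma_{u}$ be its convex envelope over $Q_{j}$; then $\Gamma\geq0$, $\Gamma$ vanishes at an interior point, and $u\geq hr^{2}$ on the ``walls'' $\{|e\cdot x|=r\}\cap Q_{j}$. The connected component of $\{u<hr^{2}\}$ containing the vanishing point of $u$ cannot reach either wall (crossing $\{|e\cdot x|=r\}$ would force $u\geq hr^{2}$), so it is trapped in the slab $\{|e\cdot x|<r\}$; slicing $Q_{j}$ by lines parallel to $e$ and using that along each such line $u$ runs from $\leq0$ to $\geq hr^{2}$ over length $\lesssim r$, the one--dimensional convex minorant of the restriction has second derivative of total mass $\gtrsim hr^{2}/r$, and a convexity argument together with Fubini yields $\fint_{Q_{j}}\tr(D^{2}\Gamma)\gtrsim_{d}h$. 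At Lebesgue-a.e.\ contact point $x\in\{u=\Gamma\}$ one has $D^{2}\Gamma(x)\geq0$, and touching $\Gamma$ from below by the paraboloid of Hessian $D^{2}\Gamma(x)-\ep I$ and sending $\ep\downarrow0$ (using that $F$ is continuous in the matrix variable, by \eref{Fue}) gives $F(D^{2}\Gamma(x),x)\geq0$; then \eref{Fue} and $D^{2}\Gamma(x)\geq0$ force $F(0,x)\geq\tr(D^{2}\Gamma(x))\geq0$. Since $\tr(D^{2}\Gamma)=0$ a.e.\ off the contact set, integration gives
\begin{equation*}
\fint_{Q_{j}}\big(F(0,x)\big)_{+}\,dx\;\gtrsim_{d}\;h .
\end{equation*}

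\emph{From $F(0,\cdot)$ large to $\mu(Q_{j},F)\geq2$.} This is where most of the work lies. Using that $F(0,\cdot)$ is continuous in $x$ (\eref{Fsr} with $A=0$), the preceding bound locates an open subcube $Q'\subseteq Q_{j}$ on which $F(0,\cdot)$ is uniformly $\geq c'(d)h$. On such a cube the paraboloid $x\mapsto\tfrac{c'h}{2d\Lambda}|x-x'|^{2}$ is a supersolution by \eref{Fue}, so (by the argument proving the lower bound in \lref{mubound}) $\mu(Q',F)\geq(c'h/(2d\Lambda))^{d}$, which exceeds $2$ once $h=h(d,\Lambda)$ is taken large enough; the hypothesis $r<\tfrac14h^{-1/2}$ only ensures this choice of $h$ is compatible with the other hypotheses. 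The remaining---and most delicate---task is to pass from a large $\mu$ on a (possibly very small) cube to $\mu(Q_{j},F)\geq2$: one arranges $Q'$ to be a triadic cube of level at least $n$ and reads the conclusion off \lref{musub} (via $\mu(Q',F)\leq\fint_{Q'}\mu(Q_{n}(y),F)\,dy$), which uses the scale separation $3^{n}\leq cr$ to leave room inside the superlevel set; and more generally one must handle the case where $F(0,\cdot)$ changes sign by building the relevant supersolution to be affine (respectively slightly concave) where $F(0,\cdot)\geq0$ (respectively $<0$), so that its convex envelope still carries the Monge--Amp\`ere mass produced on $Q'$. Note that one cannot simply restrict $u$ to a small cube: since the floor $\{e\cdot x=0\}$ of the valley is a hyperplane, such a restriction is flat in $d-1$ directions and carries no Monge--Amp\`ere mass---this is exactly why the argument must go through $F(0,\cdot)$.
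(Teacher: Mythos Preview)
Your approach has a genuine gap at its core. The claim that $\fint_{Q_{j}}\tr(D^{2}\Gamma)\gtrsim_{d}h$ (and hence $\fint_{Q_{j}}(F(0,x))_{+}\gtrsim_{d}h$) is not established by the slicing argument: the assertion that ``along each such line $u$ runs from $\leq 0$ to $\geq hr^{2}$'' is false, since $u$ vanishes only at a single point in $Q_{n}$ and along a generic line parallel to $e$ the restriction of $u$ may be $\geq hr^{2}$ throughout, giving a one--dimensional convex minorant that is affine with zero second derivative. Nothing bounds $\Gamma$ from below on $\partial Q_{j}$ (you only know $u\geq hr^{2}$ there, which bounds $\Gamma$ from \emph{above}), so no integration--by--parts argument rescues the bound either. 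A second, independent problem: even granting the integral bound, you locate a subcube $Q'$ on which $F(0,\cdot)\geq c'h$ by invoking the spatial continuity \eqref{e.Fsr}; but the constant $C$ in \eqref{e.Fsr} is explicitly \emph{not} controlled by $(d,\Lambda)$ (see the remark following Definition~\ref{d.Omega}), so this step makes $h(d,\Lambda)$ depend on forbidden parameters.

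The paper's argument is entirely different and never attempts to bound $F(0,\cdot)$. Instead it adds to $u$ the quadratic $\varphi(x):=-\tfrac{1}{8}h(e\cdot x)^{2}+|x|^{2}$, which for $h\geq 8d\Lambda$ satisfies $\Pu^{-}_{1,\Lambda}(D^{2}\varphi)\geq 0$, so $\tilde u:=u+\varphi\in S(Q_{0},F)$. The concavity of $\varphi$ in the $e$--direction pulls down the walls of the valley while its convexity in the perpendicular directions turns the flat floor into a bowl: one checks that $\tilde u\geq\tfrac12hr^{2}$ on the boundary of an anisotropic region $S_{2}$ of dimensions roughly $r\times(h^{1/2}r)^{d-1}$ yet $\tilde u\leq\tfrac14hr^{2}$ near the origin, so the convex envelope of $\tilde u$ over $S_{2}$ has $|\partial\Gamma_{\tilde u}(S_{1})|\gtrsim h^{d/2}r^{d}$ while $|S_{2}|\lesssim h^{(d-1)/2}r^{d}$, forcing $\sup_{x}\mu(Q_{n}(x),F)\gtrsim h^{1/2}$. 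The crucial idea you are missing is that one must work on a region that is \emph{long} (scale $h^{1/2}r$, not $r$) in the perpendicular directions, and that the witness for large $\mu$ is the modified supersolution $\tilde u$, not $u$ itself nor a paraboloid manufactured from $F(0,\cdot)$.
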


\begin{proof}
Throughout, $C$ and $c$ denote positive constants that depend only on $d$.  If $h \geq 8 d \Lambda$, then the quadratic function $\varphi(x) := - \tfrac{1}{8} h (e \cdot x)^2 + |x|^2$ satisfies
\begin{equation*}
\Pu^-_{1,\Lambda}(D^2 \varphi) \geq 0 \quad \mbox{in} \ \Rd.
\end{equation*}
Therefore, the function $\widetilde u := u + \varphi$ belongs to $S(Q_0,F)$. Consider the sets
\begin{equation*}
S_1 := \{ x \in\Rd\,:\, |e \cdot x| < r \mbox{ and } |x|^2  < h r^2 \} \quad \mbox{and} \quad S_2 := 2 S_1.
\end{equation*}
Using $r < \tfrac 14 h^{-1/2}$, we see that
\begin{equation} \label{e.S1S2bnd}
S_1 \subseteq S_2 \subseteq B_{2 h^{1/2} r} \subseteq B_{1/2} \subseteq  Q_0.
\end{equation}
Observe that, since $S_1$ is a subset of a rectangular box which has $d-1$ sides of length $h^{1/2}r$ and one side of length $r$, we have
\begin{equation} \label{e.boxiestuff}
|S_2| \leq C h^{(d-1)/2} r^d.
\end{equation}
We next claim that 
\begin{equation} \label{e.growclm}
\inf_{S_2 \setminus S_1} \widetilde u \geq \tfrac{1}{2} h r^2.
\end{equation}
To see this, take $x\in S_2\setminus S_1$ and consider two alternatives: first, if $|x\cdot e| < r$, then $|x|^2 \geq hr^2$ and so
\begin{equation*}\label{}
\widetilde u(x) = u(x) + \varphi(x) \geq -\frac{1}{8} h(x\cdot e)^2 + |x|^2 \geq \frac78 hr^2,
\end{equation*}
while on the other hand, if $|x\cdot e| \geq r$, then $u(x)>hr^2$ by~\eqref{e.vagrow}, and using the fact that $x\in S_2$, we get 
\begin{equation*}\label{}
\widetilde u(x) \geq hr^2 -\frac18h(x\cdot e)^2 + |x|^2 \geq hr^2 -\tfrac18 h(2r)^2 = \tfrac12 hr^2.
\end{equation*}
This completes the proof of the claim~\eqref{e.growclm}. 

Taking $h>4$ large and $c>0$ small, and using that $3^{n} \leq  cr$, we have that $Q_n \subseteq B_r \subseteq S_1$ and thus, using~\eqref{e.vagrow},
\begin{equation}\label{e.squelch}
\inf_{Q_n} \widetilde u \leq \sup_{Q_n} \varphi \leq \sup_{B_r} \varphi \leq r^2 \leq \tfrac{1}{4} h r^2.
\end{equation}
It follows from~\eqref{e.S1S2bnd},~\eqref{e.growclm} and~\eqref{e.squelch} that, for every $p \in \R^d$ such that $|p| < C h^{1/2} r$, the map $x \mapsto \widetilde u(x) - p \cdot x$ attains its infimum over $S_2$ at a point in $S_1$.  Denoting $\widetilde w := \Gamma_{\widetilde u | S_2}$, we find
\begin{equation}\label{e.tildews}
\left|\partial \widetilde w(S_1)\right| \geq C h^{d/2} r^d.
\end{equation}
Using again that $3^n \leq cr$ and making $c>0$ smaller, if necessary, we have
\begin{equation*}
S_1 \subseteq \{ x \in \Rd\,:\, Q_n(x) \subseteq S_2 \}.
\end{equation*}
Observe that, for every $x\in S_1$,
\begin{equation*}
\mu(Q_n(x),F) \geq |Q_n|^{-1} \left|\partial \widetilde w(Q_n(x))\right|.
\end{equation*}
By combining this with~\eqref{e.boxiestuff}, we obtain
\begin{equation*}
|\partial \widetilde w(S_1)| \leq |S_2| \sup_{x \in S_1} \mu(Q_n(x),F) \leq C h^{(d-1)/2} r^d \sup_{x \in S_1} \mu(Q_n(x),F).
\end{equation*}
An application of~\eqref{e.tildews} and a rearrangement yields
\begin{equation*}
h^{1/2} \leq C \sup_{x \in S_1} \mu(Q_n(x),F).
\end{equation*}
Taking $h\geq C$, we obtain \eref{vamu}.
\end{proof}

The next result states that, if the value of $\mu$ on every small scale cube is close to its value on the large scale cube, then the graph of a function $u$ which (nearly) attains the supremum in the definition of $\mu$ for the large scale cube must have curvature in all directions: after subtracting off a plane, it must look like a bowl.

\begin{lemma}
\label{l.convex}
There is a constant $c(d,\Lambda) > 0$ such that, if $n \leq n_0(d,\Lambda) < 0$ and $u \in S(Q_1,F)$ satisfies
\begin{equation}
\label{e.coquasi}
1 \leq \frac{|\partial \Gamma_u (Q_n(x))|}{|Q_n|} \leq \mu(Q_n(x),F) \leq 1 + 3^{dn} \quad \mbox{for } x \in Q_0,
\end{equation}
then there is a point $x_0 \in \{ \Gamma_u = u \} \cap Q_n$ and a slope $p_0 \in \partial \Gamma_u(x_0)$ such that
\begin{equation}
\label{e.cogrow}
u(x) \geq u(x_0) + p_0 \cdot (x - x_0) + c \quad \mbox{for } x \in Q_1 \setminus Q_0.
\end{equation}
\end{lemma}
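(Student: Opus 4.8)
The plan is to apply \lref{mooney} to the convex envelope $\Gamma_u$, after subtracting a supporting plane, and to use \lref{valley} to rule out the degenerate ``valley'' alternative of \lref{mooney}. For the set-up, note that \eref{coquasi} gives $|\partial \Gamma_u(Q_n)| \geq |Q_n| > 0$, so \lref{envelopefacet} together with a covering argument shows that $\overline{Q_n}$ cannot lie inside $\{\Gamma_u < u\}$; hence there is a contact point $x_0 \in \{\Gamma_u = u\}\cap \overline{Q_n}$. Fix $p_0 \in \partial \Gamma_u(x_0)$ and put $\ell(x) := u(x_0) + p_0\cdot(x-x_0)$. Since $x_0$ is a contact point, $\ell$ supports $u$ from below, so
$$ v := u - \ell \,\geq\, \Gamma_u - \ell \,=:\, g \,\geq\, 0 \quad\text{on } Q_1, \qquad v(x_0) = g(x_0) = 0. $$
As $n \leq n_0 < 0$ gives $\overline{Q_n}\subseteq Q_0$, this yields $\inf_{Q_n} g = \inf_{Q_0} g = 0$ and $\inf_{Q_n} v = \inf_{Q_0} v = 0$. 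Moreover $v \in S(Q_0,F)$ (a supersolution, restricted to $Q_0$ and shifted by an affine function), $g$ is convex and lies in $C(\overline{Q_0})$, and, since adding an affine function does not change the Lebesgue measure of subdifferentials, $|\partial g(Q_n(x))| = |\partial \Gamma_u(Q_n(x))| \geq |Q_n|$ for every $x \in Q_0$. So $g$ satisfies the hypotheses of \lref{mooney} and $v$ those of \lref{valley}.

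Next I would fix the scale. Let $h_1, c_1$ be the constants of \lref{mooney} and $h_2, c_2$ those of \lref{valley}. Choose $r = r(d,\Lambda) \in (0,1)$ small enough that $r < \tfrac14 h_2^{-1/2}$ and $h_1 r^{2 - 2/d} \geq h_2 r^2$ (possible for $r$ small, since $2 - 2/d < 2$), and then set $n_0 := \min\{\floor{\log_3(c_1 r)}, \floor{\log_3(c_2 r)}, -1\}$, so that $n \leq n_0$ forces $3^n \leq c_1 r$, $3^n \leq c_2 r$ and $3^{dn} < 1$. Applying \lref{mooney} to $w = g$, either $g \geq h_1 r^{2-2/d}$ on $\partial Q_0$, or there is a unit vector $e$ with $g \geq h_1 r^{2-2/d}$ on $\{x \in \overline{Q_0} : |e\cdot x| \geq r\}$.

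In the second case, $v \geq g \geq h_1 r^{2-2/d} \geq h_2 r^2$ on $\{x\in Q_0 : |e\cdot x| \geq r\}$, so \lref{valley} (applied to $v$, with the normalization above) produces $x_1 \in Q_0$ with $\mu(Q_n(x_1),F) \geq 2$; but $x_1 \in Q_0$, so \eref{coquasi} gives $\mu(Q_n(x_1),F) \leq 1 + 3^{dn} < 2$, a contradiction. Hence the first alternative holds; set $c := h_1 r^{2-2/d}$, a constant depending only on $d$ and $\Lambda$. To finish, since $g$ is convex, $g \geq 0$, $g(x_0) = 0$ with $x_0$ interior to $Q_0$, and $g \geq c$ on $\partial Q_0$, for any $x \in Q_1\setminus Q_0$ the segment $[x_0,x]$ meets $\partial Q_0$ at a point $z = (1-s)x_0 + sx$ with $0 < s \leq 1$, and convexity gives $g(z) \leq s\, g(x)$, hence $g(x) \geq s^{-1}g(z) \geq g(z) \geq c$. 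Therefore $u(x) - \ell(x) \geq \Gamma_u(x) - \ell(x) = g(x) \geq c$ for all $x \in Q_1\setminus Q_0$, which is \eref{cogrow}.

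The substance lies in \lref{mooney} and \lref{valley}; at this stage the main thing to get right is the bookkeeping: normalizing $\Gamma_u$ and $u$ simultaneously through a single supporting plane at a contact point whose existence near the center of $Q_0$ is guaranteed by the lower curvature bound via \lref{envelopefacet}, and choosing $r$ small (depending only on $d$ and $\Lambda$) so that the ``valley'' alternative of \lref{mooney} becomes incompatible — through \lref{valley} — with the upper bound $\mu(Q_n(\cdot),F) \leq 1 + 3^{dn}$ of \eref{coquasi}. The mild open-versus-closed discrepancy in locating $x_0$ (we obtain $x_0 \in \overline{Q_n}$ rather than $Q_n$) is harmless since all three infima above only need vanish on the closures.
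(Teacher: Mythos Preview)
Your proof is correct and follows essentially the same approach as the paper: normalize by subtracting a supporting plane at a contact point in $Q_n$, apply \lref{mooney} to the convex envelope, and use \lref{valley} to rule out the slab alternative via the upper bound in \eref{coquasi}. Your bookkeeping of the constants $h_1,h_2,c_1,c_2$ and the choice of $r$ is more explicit than the paper's, and your closed-versus-open worry about $x_0$ is unnecessary (the contrapositive of \lref{envelopefacet} already gives $x_0$ in the open cube $Q_n$), but these are cosmetic differences only.
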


\begin{proof}
The idea is to apply \lref{mooney} to $\Gamma_u$ and then use \lref{valley} to rule out the second alternative~\eref{mobend} in the conclusion of Lemma~\ref{l.mooney}.

We may assume that, for some $x_0 \in Q_n$,
\begin{equation}
\label{e.coflat}
u(x_0) = \Gamma_u(x_0) = 0 \quad \mbox{and} \quad 0 \in \partial \Gamma_u(x_0),
\end{equation}
Indeed, by \eref{coquasi} we have, for every $y\in Q_0$,
\begin{equation*}
\begin{aligned}
1 & \leq \int_{Q_0} \frac{|\partial \Gamma_u(Q_n(x))|}{|Q_n|} \,dx \\
& = |\partial \Gamma_u(Q_n(y))| + \int_{Q_0 \setminus Q_n(y)} \frac{|\partial \Gamma_u(Q_n(x))|}{|Q_n|} \,dx \\
& \leq |\partial \Gamma_u(Q_n(y))| + (1 - 3^{dn})(1 + 3^{dn}).
\end{aligned}
\end{equation*}
In particular, for any $y\in Q_0$, we have $|\partial \Gamma_u(Q_n(y))| > 0$ and hence
\begin{equation}
\label{e.cocontact}
Q_n(y) \cap \{ u=\Gamma_u \} \neq \emptyset.
\end{equation}
Now, we choose $x_0 \in Q_n \cap \{ u = \Gamma_u \}$ and $p_0 \in \partial \Gamma_u(x_0)$, and subtract the affine function $x \mapsto u(x_0) + p_0 \cdot (x - x_0)$ from both $u$ and $\Gamma_u$.  This gives $u(x_0) = \Gamma_u(x_0) = 0$ and $0 \in \partial \Gamma_u(x_0)$ while preserving the hypotheses of the lemma.

Take $r>0$ to be selected below. Applying \lref{mooney}, we find that, provided $n \leq n_0(d,r) < 0$, either~\eref{mogrow} or~\eref{mobend} holds for $\Gamma_u$. In the case in which~\eref{mobend} holds and $r$ is sufficiently small, depending on $(d, \Lambda)$,  \lref{valley} gives
\begin{equation*}
\mu(Q_n(x_1),F) \geq 2
\end{equation*}
for some $x_1 \in Q_0$, contradicting \eref{coquasi}. Thus the first alternative~\eqref{e.mogrow} must hold and, in view of \eref{coflat}, we obtain
\begin{equation*}
\inf_{Q_1 \setminus Q_0} u \geq \inf_{\partial Q_0} \Gamma_u \geq \inf_{Q_n} \Gamma_u + h r^{2-2/d} = \inf_{Q_n} u + h r^{2 - 2/d},
\end{equation*}
where $h(d)>0$ is as in~\lref{mooney}. In particular, \eref{cogrow} holds for $p_0 = 0$ and $c = h r^{2 - 2/d}>0$.
\end{proof}

We next rescale Lemma~\ref{l.convex} to get a statement which is better suited for its main application (which is found in Step~2 of the proof of Lemma~\ref{l.contract} below). 

\begin{corollary}
\label{c.convex}
There is a constant $c(d,\Lambda) > 0$ such that, if $n \geq n_0(d,\Lambda) > 0$, $m \in \Z$, $a > 0$, and $u \in S(Q_{m+n+1},F)$ satisfy
\begin{equation}
\label{e.cquasi}
a \leq \frac{|\partial \Gamma_u(Q_m(x))|}{|Q_m|} \leq \mu(Q_m(x),F) \leq \left(1 + 3^{-dn}\right) a \quad \mbox{for all}  \ x \in Q_{m+n},
\end{equation}
then there exists $x_0 \in \{\Gamma_u = u\} \cap Q_m$ and $p_0 \in \partial \Gamma_u(x_0)$ such that
\begin{equation}
\label{e.cgrow}
u(x) \geq u(x_0) + p_0 \cdot (x - x_0) + c a^{1/d} \left(3^{m+n}\right)^2 \quad \mbox{for all} \ x \in Q_{m+n+1} \setminus Q_{m+n}.
\end{equation}
\end{corollary}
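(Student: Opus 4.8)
The plan is to deduce Corollary~\ref{c.convex} from Lemma~\ref{l.convex} by a simple rescaling (a triadic dilation together with a quadratic rescaling of $u$), matched with the appropriate rescaling of the operator $F$. First I would introduce the affine change of variables $y = 3^{-(m+n)} x$, which maps $Q_{m+n+1}$ onto $Q_1$, $Q_{m+n}$ onto $Q_0$, and more generally $Q_{m'}(x)$ onto $Q_{m'-(m+n)}(y)$; in particular $Q_m(x)$ corresponds to $Q_{-n}(y)$. Then I would set $\tilde u(y) := \lambda^{-1} u(3^{m+n} y)$ for a normalizing constant $\lambda>0$ chosen so that $\mu$ of the rescaled data on the level-$(-n)$ cubes is normalized to lie in $[1, 1+3^{-dn}]$; a short computation using the affine invariance built into the definition of $\mu$ (the area-formula expression $\mu = \sup \fint \det D^2\Gamma_u$, together with the fact that under $x\mapsto 3^{k}x$ one has $\det D^2\Gamma$ scaling by $3^{-2dk}$ and under $u\mapsto\lambda^{-1}u$ it scales by $\lambda^{-1}$) shows that $\mu(Q_m(x),F)$ rescales to $\mu(Q_{-n}(y),\tilde F)$ up to the factor $\lambda^{-1} 3^{-2d(m+n)}$, where $\tilde F$ is the correspondingly rescaled operator. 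Choosing $\lambda := a\,3^{-2d(m+n)} \cdot 3^{2d(m+n)} = a \cdot (3^{m+n})^{2}$ — i.e. $\lambda = a (3^{m+n})^{2}$ after also absorbing the $d$-th root appropriately — makes the hypothesis \eqref{e.cquasi} transform exactly into \eqref{e.coquasi} with $n$ replaced by $-n$ (note the sign flip: $n>0$ large in the corollary corresponds to $-n<0$ very negative, matching $n\le n_0<0$ in Lemma~\ref{l.convex}).

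The one point needing care is that Lemma~\ref{l.convex} requires $\tilde u \in S(Q_1, \tilde F)$, so I must check that the rescaled operator $\tilde F$ still lies in $\Omega(\Lambda)$. Under $x \mapsto 3^{m+n} y$ and $u \mapsto \lambda^{-1} u$, the Hessian $D^2 u$ transforms by the factor $\lambda^{-1} 3^{2(m+n)}$; with the choice $\lambda = a(3^{m+n})^2$ this factor is exactly $a^{-1}$, a positive constant, so $\tilde F(B,y) := a^{-1} F(aB, 3^{m+n} y)$ is a scalar multiple of $F$ composed with an affine change of the spatial variable, hence satisfies the Pucci bounds \eqref{e.Fue} with the same $\Lambda$ (scalar multiplication and affine spatial change preserve the ellipticity class), and the supersolution property $\tilde F(D^2 \tilde u, y) \ge 0$ in $Q_1$ follows from $F(D^2 u, x) \ge 0$ in $Q_{m+n+1}$. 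So $\tilde u \in S(Q_1,\tilde F)$ as required, and $\mu(Q_{-n}(y),\tilde F)$ is the rescaled version of $\mu(Q_m(x),F)$.

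With the setup in place, I would apply Lemma~\ref{l.convex} to $\tilde u$ and $\tilde F$: provided $-n \le n_0(d,\Lambda)$, equivalently $n \ge n_0(d,\Lambda) > 0$ after renaming the threshold, we obtain $y_0 \in \{\Gamma_{\tilde u} = \tilde u\} \cap Q_{-n}$ and $\tilde p_0 \in \partial \Gamma_{\tilde u}(y_0)$ with
\begin{equation*}
\tilde u(y) \geq \tilde u(y_0) + \tilde p_0 \cdot (y - y_0) + c \quad \mbox{for } y \in Q_1 \setminus Q_0.
\end{equation*}
Undoing the change of variables — setting $x_0 := 3^{m+n} y_0 \in Q_m$, $p_0 := \lambda\, 3^{-(m+n)} \tilde p_0$, and multiplying through by $\lambda = a(3^{m+n})^2$ — turns this into
\begin{equation*}
u(x) \geq u(x_0) + p_0 \cdot (x - x_0) + c\, a (3^{m+n})^2 \quad \mbox{for } x \in Q_{m+n+1} \setminus Q_{m+n}.
\end{equation*}
This is \eqref{e.cgrow} up to replacing $a$ by $a^{1/d}$ in the additive term; the power $a^{1/d}$ rather than $a$ appears because the correct normalization is to rescale $u$ by $\lambda = a^{1/d}\,(3^{m+n})^2$ (so that $\det D^2$ of the convex envelope, which scales like $\lambda^{d} 3^{-2d(m+n)}$ under the combined map, matches the target normalization $\approx 1$), and I would carry that factor through the computation in Step~1 above — this bookkeeping of the exponent on $a$ is the only genuinely delicate part of the argument, everything else being a direct change of variables. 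Also, since $p_0 = 0$ and $c = h r^{2-2/d}$ emerge from Lemma~\ref{l.convex}, the final constant $c(d,\Lambda)$ in the corollary is simply $h r^{2-2/d}$ with $r = r(d,\Lambda)$ the value fixed in the proof of Lemma~\ref{l.convex}.
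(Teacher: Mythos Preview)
Your approach is correct and is exactly the paper's: rescale by setting $v(y):=t^{-1}s^{-2}u(sy)$ with $s=3^{m+n}$ and $t=a^{1/d}$, check that $G(A,y):=t^{-1}F(tA,sy)\in\Omega(\Lambda)$ and $v\in S(Q_1,G)$, and apply Lemma~\ref{l.convex}. Your only slip is the muddled normalization in the middle of the argument (the displayed ``$\lambda := a\,3^{-2d(m+n)}\cdot 3^{2d(m+n)}=a(3^{m+n})^2$'' is incoherent); you should take $\lambda=a^{1/d}(3^{m+n})^2$ from the outset, as you yourself note at the end, since $|\partial\Gamma_{\tilde u}(Q)|/|Q|=\lambda^{-d}s^{2d}\,|\partial\Gamma_u(sQ)|/|sQ|$ and this equals $1$ when the right-hand density is $a$ precisely for that choice.
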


\begin{proof}
For every $s,t>0$ and $F\in \Omega$, the operator $G:\Sy\times \Rd\to\R$ defined by
\begin{equation*}
G(A,x) := t^{-1} F(t A, s x)
\end{equation*}
belongs to $\Omega$. Moreover, if $u \in S(U,F)$, then the function $v(x) := t^{-1} s^{-2} v(s x)$ belongs to $S(s^{-1}U,G)$.
Since the constants in \lref{convex} depend only $(d,\Lambda)$, we immediately obtain \eref{cgrow} from \eref{cquasi} by taking $s := 3^{m+n}$ and $t := a^{1/d}$ and applying \lref{convex} with $G$ and $v$ in place of $F$ and $u$.
\end{proof}

\section{Contraction of the Variance}

In this section we establish the two key ingredients in the proof of Theorem~\ref{t.mudecay}. They are (i) Lemma~\ref{l.contract}, which is based on the theory in the previous section and asserts that, if the variances of $\mu$ and $\mu_*$ are both small (relative to their second moments) then, on a larger scale, both $\mu$ and $\mu_*$ have small second moments; and (ii) Lemma~\ref{l.concentrate}, which uses the finite range of dependence to show that, after passing to a larger scale, the second moment of $\mu$ must decay no less than by an amount proportional to its variance. 

Throughout this section, we assume that $\P$ is a probability measure on~$(\Omega(\Lambda),\F)$ satisfying~\eqref{e.Fubb},~\eqref{e.stat} and~\eqref{e.frd}.

\begin{lemma}
\label{l.contract}
Suppose $s,\delta > 0$ and $m,n\in \N$ are such that
\begin{equation}\label{e.contcth1}
0 < \E \bracks{ \mu(Q_m,F + s)^2 } \leq (1 + \delta) \E \bracks{ \mu(Q_{m+n},F + s) }^2
\end{equation}
and
\begin{equation}\label{e.contcth2}
0 < \E \bracks{ \mu(Q_m,F_* + s)^2 } \leq (1 + \delta) \E \bracks{ \mu(Q_{m+n},F_* + s) }^2.
\end{equation}
Then there exist $C(d,\Lambda) >0$, $n_0(d,\Lambda)\in \N$ and $\delta_0(d,\Lambda)>0$ such that $n\geq n_0$ and $\delta \leq \delta_0$ imply that 
\begin{equation}
\label{e.contract}
\E \bracks{ \mu(Q_{m+n},F + s)^2 } + \E \bracks{ \mu(Q_{m+n},F_* + s)^2 } 
\leq 
Cs^{2d}.
\end{equation}
\end{lemma}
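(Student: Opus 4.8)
\medskip
\noindent\textit{Proof sketch.} I would argue by contradiction, reducing \eref{contract} to a first--moment bound and then extracting geometric information from \cref{convex}. Put $a:=\E\bracks{\mu(Q_{m+n},F+s)}$ and $b:=\E\bracks{\mu(Q_{m+n},F_*+s)}$. By the monotonicity \eref{mono2} and the hypotheses \eref{contcth1}--\eref{contcth2},
\[
\E\bracks{\mu(Q_{m+n},F+s)^2}\le\E\bracks{\mu(Q_m,F+s)^2}\le(1+\delta)a^2,
\]
and symmetrically $\E\bracks{\mu(Q_{m+n},F_*+s)^2}\le(1+\delta)b^2$. Hence \eref{contract} holds as soon as $a^2+b^2\le\tfrac12 s^{2d}$ and $\delta_0\le1$, so it suffices to prove $a\le\tfrac12 s^d$ and $b\le\tfrac12 s^d$; since $F\mapsto F_*$ preserves all the hypotheses (Subsection~\ref{ss.pushfor}), the two bounds are proved the same way, and I will assume $a\ge\tfrac12 s^d$ and derive a contradiction once $n$ is large and $\delta$ small.

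The hypotheses force $\mu$ to concentrate on \emph{every} scale between $m$ and $m+n$: using \eref{mono2} (twice, also to get $\E\bracks{\mu(Q_{m'},F+s)}\ge\E\bracks{\mu(Q_{m+n},F+s)}=a$) and \eref{contcth1}, for all $m\le m'\le m+n$,
\[
\E\bracks{\bigl(\mu(Q_{m'},F+s)-a\bigr)^{2}}\le\E\bracks{\mu(Q_m,F+s)^2}-2a^{2}+a^{2}\le(1+\delta)a^{2}-a^{2}=\delta a^{2},
\]
and likewise for $F_*+s$ with $b$ in place of $a$. Let $n_1=n_1(d,\Lambda)$ be the threshold in \cref{convex}, and take $n_0=n_0(d,\Lambda)\ge n_1+1$ (its size fixed below). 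Fixing $m'\in\{m,\dots,m+n-n_1\}$ and applying Chebyshev's inequality with a union bound over the \emph{fixed number} $3^{dn_1}$ of triadic cubes $Q=Q_{m'}(x)\subseteq Q_{m'+n_1}$, one finds $\delta_0=\delta_0(d,\Lambda)>0$ so that, for $\delta\le\delta_0$, on an event $\mathcal E$ of probability at least $\tfrac12$ one has $\bigl(1-3^{-dn_1}\bigr)a\le\mu(Q,F+s)\le\bigl(1+3^{-dn_1}\bigr)a$ and the analogous bounds with $b$, for all such $Q$ and all $m'$ in the stated range.

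Working on $\mathcal E$, I would iterate \cref{convex} from scale $m$ upward in steps of $n_1$ (re-verifying its hypothesis \eref{cquasi} at each scale) for a quasi--maximizer $u\in S(Q_{m+n+1},F+s)$ of $\mu(Q_{m+n+1},F+s)$, obtaining $x_0\in Q_m$ and $p_0\in\partial\Gamma_u(x_0)$ with $\Gamma_u(x_0)=u(x_0)$; after subtracting the affine function $x\mapsto u(x_0)+p_0\cdot(x-x_0)$ (which affects neither membership in $S(Q_{m+n+1},F+s)$ nor $\mu$),
\[
u\ge0\ \text{in}\ Q_{m+n+1},\qquad u(x_0)=0,\qquad u\ge c\,a^{1/d}\bigl(3^{m+n}\bigr)^{2}\ \text{in}\ Q_{m+n+1}\setminus Q_{m+n},
\]
for some $c=c(d,\Lambda)>0$. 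Verifying \eref{cquasi} at a given scale---exhibiting a single supersolution whose convex envelope realizes the curvature lower bound on every relevant subcube at once (a Perron/gluing argument using that infima of supersolutions are supersolutions, together with \lref{mugood})---is the more delicate point here. In the same way one gets $v\in S(Q_{m+n+1},F_*+s)$ with $v\ge0$, $v(y_0)=0$ for some $y_0\in Q_m$, and $v\ge c\,b^{1/d}(3^{m+n})^{2}$ on $Q_{m+n+1}\setminus Q_{m+n}$. Set $\psi:=u+v$. From $F(D^2u,x)\ge-s$ and $F_*(D^2v,x)\ge-s$, i.e.\ $F\bigl(D^2(-v),x\bigr)\le s$, the uniform ellipticity \eref{Fue} gives $\Pu^{+}_{1,\Lambda}(D^2\psi)\ge F(D^2u,x)-F\bigl(D^2(-v),x\bigr)\ge-2s$ in the viscosity sense---crucially the $F(0,\cdot)$ terms cancel---so $\psi\in S\bigl(Q_{m+n+1},H\bigr)$ for the \emph{constant--coefficient} operator $H(A):=\Pu^{+}_{1,\Lambda}(A)+2s$, and a direct computation gives $\mu(Q,H)=\mu(H)=\bigl(2s/(d\Lambda)\bigr)^{d}\le C(d,\Lambda)\,s^{d}$, while $\psi\ge0$ and $\psi\ge c(a^{1/d}+b^{1/d})(3^{m+n})^{2}$ on $\partial Q_{m+n+1}$.

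Finally, applying \lref{muabp} to $\psi\in S(Q_{m+n+1},H)$,
\[
c\,(a^{1/d}+b^{1/d})\bigl(3^{m+n}\bigr)^{2}\le\inf_{\partial Q_{m+n+1}}\psi\le\inf_{Q_{m+n+1}}\psi+C(d)\,3^{2(m+n)}\mu(H)^{1/d},
\]
and $\mu(H)^{1/d}=\tfrac{2}{d\Lambda}s$ is a small multiple of $s$. The step I expect to be the main obstacle is to bound $\inf_{Q_{m+n+1}}\psi$ by a quantity small compared with $(a^{1/d}+b^{1/d})\,3^{2(m+n)}$: since $u$ and $v$ vanish at $x_0,y_0\in Q_m$ at the \emph{microscopic} scale, their convex envelopes are controlled near $Q_m$ by \lref{envelopec11} (after rescaling, $|D\Gamma_u|,|D\Gamma_v|\lesssim(s+K_0)\,3^{m}$ near $Q_m$, so $\Gamma_u,\Gamma_v\lesssim(s+K_0)\,3^{2m}$ there), and $\psi$ solves an equation whose forcing is only $\sim s$; a weak-Harnack/$W^{2,\varepsilon}$-type argument (being careful not to reintroduce $K_0$) should give $\inf_{Q_{m+n+1}}\psi\le\inf_{Q_m}\psi\lesssim 3^{2m}s$. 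Since $3^{2m}=3^{-2n}\,3^{2(m+n)}$, feeding this into the displayed inequality and taking $n\ge n_0$ large enough (so that the surviving constant is $<\tfrac12$) yields $a,b\le\tfrac12 s^d$, contradicting $a\ge\tfrac12 s^d$. The two places where the real work lies are exactly this last estimate---controlling a supersolution near a point where it vanishes, exploiting the full scale separation $3^{-2n}$ and keeping the forcing proportional to $s$---and the verification of \eref{cquasi} in the previous step.
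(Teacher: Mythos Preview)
Your argument has a genuine gap at exactly the place you flag as ``the main obstacle,'' and the proposed fix cannot work. Once you have $\psi=u+v\in S(Q_{m+n+1},H)$ with $\psi\ge c(a^{1/d}+b^{1/d})3^{2(m+n)}$ on $\partial Q_{m+n+1}$, the inequality from \lref{muabp} (equivalently, comparison with a paraboloid) gives the \emph{lower} bound
\[
\psi \;\ge\; \bigl[c(a^{1/d}+b^{1/d})-Cs\bigr]\,3^{2(m+n)}\quad\text{throughout }Q_{m+n+1}.
\]
This already forces $\inf_{Q_m}\psi$ to be of order $(a^{1/d}+b^{1/d})3^{2(m+n)}$ whenever $a^{1/d}+b^{1/d}\gg s$, so your claimed upper bound $\inf_{Q_m}\psi\lesssim 3^{2m}s$ is simply false in the regime you are trying to rule out. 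The difficulty is structural: $u$ vanishes at $x_0$ and $v$ vanishes at $y_0$, but these are in general \emph{different} points of $Q_m$, so there is no point at which $\psi=u+v$ is known to be small. Bounding $\Gamma_u,\Gamma_v$ near $Q_m$ via \lref{envelopec11} only controls the convex envelopes from above, not $u,v$ themselves; and weak Harnack for the nonnegative supersolution $\psi$ relates its infimum to its $L^p$ average, never producing an upper bound on $\inf\psi$ in terms of $s$ alone (add a constant to $\psi$ to see this).

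The paper closes the argument by a different device that avoids bounding $\inf\psi$ altogether. After obtaining the lower bound above on $w=u+u_*$, it introduces the \emph{Dirichlet solutions} $v,v_*$ of $F(D^2v,x)+s=0$ and $F_*(D^2v_*,x)+s=0$ in the small cube $Q_1$ with zero boundary data. The maximum principle gives $v+v_*\le 0$, so at the origin $(v-u)(0)+(v_*-u_*)(0)\le -c3^{2n}(a^{1/d}+a_*^{1/d})+C3^{2n}s$. Without loss $\xi:=v-u$ carries at least half of this negativity at $0$; crucially, $\xi$ satisfies the \emph{homogeneous} two--sided Pucci inequalities $\Pu^-_{1,\Lambda}(D^2\xi)\le 0\le\Pu^+_{1,\Lambda}(D^2\xi)$ and $\xi\le 0$ on $\partial Q_1$, so the Harnack inequality propagates $\xi(0)$ to $\xi\le c\xi(0)$ on all of $Q_0$. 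Since $u\ge 0$, this makes $v$ very negative in $Q_0$, and then \lref{muabp} applied to $v$ in $Q_1$ forces $\mu(Q_1,F+s)\gtrsim 3^{dn}(a+a_*-Cs^d)$, contradicting the upper bound $\mu(Q_1,F+s)\le(1+\ep)a$ from the concentration step. Two further remarks: the iteration of \cref{convex} you propose is unnecessary---a single application at the full scale separation $n$ gives the needed growth---and the verification of \eref{cquasi} for a near-optimizer $u$ follows directly by pigeonhole (as in the proof of \lref{convex}) once the subcube $\mu$'s are pinned near $a$; no Perron/gluing construction is required.
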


\begin{proof}
By scaling, we may assume that $m = 0$.  Define
\begin{equation*}
a := \E \bracks{\mu(Q_n,F+s)} \quad \mbox{and} \quad a_* := \E \bracks{\mu(Q_n,F_*+s)} = \E \bracks{\mu_*(Q_n,F-s)} .
\end{equation*}
Also fix $\ep > 0$ to be selected below. Throughout the proof, we let $C$ and $c$ denote positive constants that depends only on $(d,\Lambda)$ and may differ in each occurrence. 

\smallskip

\emph{Step 1.} We show that, if $\delta < 3^{-dn-1} \ep^2$, then there exists $F \in \Omega$ such that, for all $x\in Q_n$,
\begin{equation}
\label{e.cquasi1}
(1 - \ep) a \leq \mu(Q_n,F+s) 
\quad \mbox{and} \quad 
 \mu(Q_0(x),F+s) \leq (1 + \ep) a
\end{equation}
and
\begin{equation}
\label{e.cquasi2}
(1 - \ep) a_* \leq \mu(Q_n,F_*+s) 
\quad \mbox{and} \quad 
\mu(Q_0(x),F_*+s) \leq (1 + \ep) a_*.
\end{equation}
Using Chebyshev's inequality,~\eqref{e.mono1},~\eqref{e.contcth1} and $a=\E\bracks{\mu(Q_n,F+s)}$, we estimate
\begin{equation*}
\begin{aligned}
\P\bracks{\mu(Q_0,F+s) > (1 + \ep) a} & \leq \P\bracks{(\mu(Q_0,F+s)-a)^2 > \ep^2 a^2} \\
& \leq \frac1{\ep^2 a^2} \E\bracks{(\mu(Q_0,F+s)-a)^2} \\
& \leq \frac1{\ep^2 a^2} \left( \E\bracks{(\mu(Q_0,F+s)^2}-a^2 \right)\\
& \leq \delta \ep^{-2}.
\end{aligned} 
\end{equation*}
Using Chebyshev's inequality,~\eqref{e.mono2} and~\eqref{e.contcth1}, we compute
\begin{equation*}
\begin{aligned}
\P\bracks{\mu(Q_n,F+s) < (1 - \ep)a} & \leq \P\bracks{(\mu(Q_n,F+s)-a)^2 > \ep^2 a^2} \\
& \leq \frac{\E\bracks{\mu(Q_n,F+s)^2} - a^2}{\ep^2 a^2} \\
& \leq \frac{\E\bracks{\mu(Q_0,F+s)^2} - a^2}{\ep^2 a^2} \\
& \leq \delta \ep^{-2}.
\end{aligned}
\end{equation*}
Using~\eqref{e.contcth2} in place of~\eqref{e.contcth1} and arguing similarly, we obtain
\begin{equation*}
\P\bracks{\mu(Q_0,F_*+s) > (1 + \ep) a_*} \leq \delta \ep^{-2}
\end{equation*}
and
\begin{equation*}
\P\bracks{\mu(Q_n,F_*+s) < (1 - \ep)a_*} \leq \delta \ep^{-2}.
\end{equation*}
The  above four inequalities and a union bound tell us that the probability that both \eref{cquasi1} and \eref{cquasi2} hold is at least
\begin{equation*}
1 - 2(3^{dn} + 1) \delta \ep^{-2} \geq 1 - 3^{dn + 1}  \delta \ep^{-2}.
\end{equation*}
If $\delta < 3^{-dn-1}\ep^2$, then this probability is positive and  in particular there exists $F\in \Omega$ for which both~\eqref{e.cquasi1} and~\eqref{e.cquasi2} hold.

\smallskip

\emph{Step 2.} We show that, if $\ep < 3^{-2dn-2}$ and $F\in \Omega$ is such that both \eref{cquasi1} and \eref{cquasi2} hold, 
then 
\begin{equation}
\label{e.ca3dn}
a + a_* \geq c3^{dn}(a + a_* - C s^d).
\end{equation}
We begin by observing that there exist $u,u_*\in C(\overline Q_n)$ satisfying 
\begin{equation}\label{e.equust}
F(D^2u,x) + s = 0 = F_*(D^2u_*,x) +s  \quad \mbox{in} \ Q_n,
\end{equation}
\begin{equation}
\label{e.cbump1}
\inf_{\partial Q_n} u \geq \inf_{Q_n} u + c 3^{2n}a^{1/d}   \quad \mbox{and} \quad \inf_{Q_n} u = \inf_{Q_0} u = 0.\end{equation}
and
\begin{equation}
\label{e.cbump2}
\inf_{\partial Q_n} u_* \geq \inf_{Q_n} u_* + c  3^{2n} a_*^{1/d} \quad \mbox{and} \quad \inf_{Q_n} u_* = \inf_{Q_0} u_* = 0.
\end{equation}
Indeed, we first select~$u \in S(Q_n,F+s)$ such that
\begin{equation}
\label{e.select}
\frac{|\partial \Gamma_u(Q_n)|}{|Q_n|} \geq (1 - \ep) \mu(Q_n,F+s)
\end{equation}
and then check that \eqref{e.cquasi1} implies that the hypothesis of \cref{convex} holds for $u$, using that~$\ep < 3^{-2dn-2}$. To see this, use~\eqref{e.cquasi1} and~\eqref{e.select} to get
\begin{align}
\label{e.wrap}
(1-\ep)^2 a  
\leq 
\frac{|\partial \Gamma_u(Q_n)|}{|Q_n|} 
&
\leq
\frac1{|Q_n|} 
\sum_{Q_0(x) \subseteq Q_n} |\partial \Gamma_u(Q_0(x))|
\\ & \notag
\leq 
\frac1{|Q_n|} 
\sum_{Q_0(x) \subseteq Q_n}
\mu(Q_0(x),F+s)
\leq (1+\ep)a. 
\end{align}
The first inequality is by~\eqref{e.cquasi1} and~\eqref{e.select} above, the second is from the fact that the cubes~$Q_0(x)\subseteq Q_n$ partition~$Q_n$ (and the fact that $| \partial \Gamma_u(K)|=0$ if $|K|=0$, see Lemma 2.2), the third inequality is by definition, and the last one is by~\eqref{e.cquasi1} and the fact that the sum is over exactly $3^{dn}=|Q_n|$ many cubes. The string of inequalities is thus strict by no more than $(1+\ep) a - (1-\ep)^2a \leq 3\ep a$. We deduce that, for every $x\in Q_n$, 
\begin{equation}
|\partial \Gamma_u(Q_0(x))| \geq (1+\ep)a-3\ep a|Q_n| \geq \big( 1 - 3^{1-dn} \ep \big) a. 
\end{equation}
Combined with~\eqref{e.cquasi1} and $\ep < 3^{-2dn-2}$, this implies the hypothesis of Corollary~\ref{c.convex} is valid for $u$ with~$m=0$ and~$ \big( 1 - 3^{1-dn} \ep \big) a$ in place of~$a$. Now, after applying the corollary to get~\eqref{e.cgrow} and subtracting an affine function from $u$, we obtain \eref{cbump1}. By replacing $u$ by the solution $\widetilde u$ of $F(D^2\tilde u) + s =0$ in $Q_n$ with Dirichlet boundary condition $\widetilde u = u$ on $\partial Q_n$, we may assume that the first equation of~\eqref{e.equust} holds (we have also used that, by the comparison principle, $\Gamma_u(Q_n) \subseteq \Gamma_{\widetilde u}(Q_n)$). The same argument also works to produce $u_*$. Observe that it is here, in the application of Corollary~\ref{c.convex}, that we have used the hypothesis that $a>0$ and $a_* > 0$. 

\smallskip

Using~\eqref{e.odd.dual}, we see that the function $w:=u+u_*$ satisfies\footnote{For readers who may not be experts in viscosity solution technicalities: of course, the differential inequality for $w$ is formally derived from~\eqref{e.Fue}, but it is not immediately obvious that this is rigorous in the viscosity sense because it is possible that neither $u$ nor $u_*$ is $C^2$. It turns out that the inequality is valid, but must be justified by an argument based on the comparison principle, which goes like this: if $w$ is not a supersolution of the inequality, then by definition we can strictly touch it from below by a smooth function $\phi$ which violates the inequality. Then we compare $u$ to $u_*-\phi$ to get a contradiction. This argument is well-known and so we omit the details, and we make free use of this technical device throughout the paper without further mention.}
\begin{equation*}
w \geq c3^{2n}\left(a^{1/d}+a_*^{1/d}\right) \quad \mbox{on } \partial Q_n \quad \mbox{and} \quad \Pu^+_{1,\Lambda}(D^2w) \geq -2s  \quad \mbox{in} \ Q_n.
\end{equation*}
By comparing to a parabola (or alternatively, using the ABP inequality), we obtain
\begin{equation}
\label{e.uusabove}
w \geq c3^{2n}\left(a^{1/d}+a_*^{1/d}\right) - C 3^{2n}s \quad \mbox{in } Q_n.
\end{equation}
Now let $v,v_* \in C(\overline Q_1)$ denote the solutions of
\begin{equation*}
\begin{cases}
F(D^2 v,x) + s = 0 & \mbox{in } Q_1, \\
v = 0 & \mbox{on } \partial Q_1,
\end{cases}
\quad \mbox{and} \quad
\begin{cases}
F_*(D^2 v_*,x) + s = 0 & \mbox{in } Q_1, \\
v_* = 0 & \mbox{on } \partial Q_1,
\end{cases}
\end{equation*}
and observe that their sum $\widetilde w := v + v_*$ satisfies
\begin{equation*}
\widetilde w = 0 \quad \mbox{on } \partial Q_1 \quad \mbox{and} \quad \Pu^-_{1,\Lambda}(D^2 \widetilde w) \leq -2s \leq 0 \quad \mbox{in} \ Q_1.
\end{equation*}
By the maximum principle,
\begin{equation}
\label{e.vvsbelow}
\widetilde w \leq 0 \quad \mbox{in} \ Q_1.
\end{equation}
Combining \eref{cbump1}, \eref{cbump2}, \eref{uusabove}, and \eref{vvsbelow}, we have
\begin{equation} \label{e.fandango}
v(0) - u(0) + v_*(0) - u_*(0) = \widetilde w(0) - w(0) \leq C 3^{2n}s - c3^{2n}\left(a^{1/d}+a_*^{1/d}\right).
\end{equation}
Thus at least one of the terms $v(0) - u(0)$ or $v_*(0) - u_*(0)$ is no more than half the right side of~\eqref{e.fandango}, i.e., no more than $C3^{2n}s$. By symmetry, we may assume without loss of generality that
\begin{equation} \label{e.updembo}
v(0) - u(0) \leq C 3^{2n}s - c3^{2n}\left(a^{1/d}+a_*^{1/d}\right)
\end{equation}
and consider the difference $\xi := v-u$, which satisfies
\begin{equation*}
\xi \leq 0 \quad \mbox{on} \ \partial Q_1 \quad \mbox{and} \quad \Pu^-_{1,\Lambda}(D^2\xi) \leq 0 \leq \Pu^+_{1,\Lambda}(D^2\xi) \quad \mbox{in} \ Q_1.
\end{equation*}
The maximum principle gives that $\xi \leq 0$ in $Q_1$ and, in view of~\eqref{e.updembo}, the Harnack inequality~\cite[Theorem~4.3]{CC} implies
\begin{equation*}
v - u = \xi \leq c\xi(0) \leq - c3^{2n} (a^{1/d} + a_*^{1/d} - C s)  \quad \mbox{in} \ Q_0.
\end{equation*}
Therefore,
\begin{equation*}
\inf_{Q_0} v \leq \inf_{Q_0} u - c3^{2n}(a^{1/d} + a_*^{1/d} - C s) = - c3^{2n}(a^{1/d} + a_*^{1/d} - C s).
\end{equation*}
Using this and the fact that $v = 0$ on $\partial Q_1$, we may apply Lemma~\ref{l.muabp} to get
\begin{equation*}\label{}
 c3^{dn}(a^{1/d} + a_*^{1/d} - C s)^d  \leq \mu(Q_1,F) \leq \fint_{Q_1} \mu(Q_0(x),F)\,dx \leq (1 + \ep)a.
\end{equation*}
Note that in the last inequality we used~\eqref{e.cquasi1}. Since 
\begin{equation*}\label{}
(a^{1/d} + a_*^{1/d} - C s)^d \geq c(a+a_*-Cs^d),
\end{equation*}
this completes the proof of~\eqref{e.ca3dn}.

\smallskip

\emph{Step 3.} The conclusion. By Steps~1 and~2, if $\delta < 3^{-5dn-5}$, then 
\begin{equation*} \label{}
\left( c3^{dn} - 1\right) ( a + a_* ) \leq C3^{dn} s^d. 
\end{equation*}
Taking $n_0(d,\Lambda)\in\N$ sufficiently large, we deduce that, if~$n \geq n_0$, then 
\begin{equation*}
a + a_*
\leq
C s^d. 
\end{equation*}
From the previous inequality and~\eqref{e.mono2},~\eqref{e.contcth1} and~\eqref{e.contcth2}, we obtain
\begin{align*}
\E \bracks{ \mu(Q_{n},F + s)^2 } + \E \bracks{ \mu(Q_{n},F_* + s)^2 }  &  \leq  \E \bracks{ \mu(Q_{0},F + s)^2 } + \E \bracks{ \mu(Q_{0},F_* + s)^2 } \\
& \leq (1+\delta)\left( a^2 + a_*^2 \right) \\
& \leq C s^{2d}.
\end{align*}
This proves~\eqref{e.contract} for $n = n_0$ and $\delta < \delta_0:= 3^{-5dn_0-5}$. By~\eqref{e.mono1}, the hypotheses of the lemma are stronger and the conclusion is weaker as~$n$ becomes larger. We deduce therefore that the lemma is valid for every $n\geq n_0$ and $\delta < \delta_0$.
\end{proof}

The next lemma contains the only use of the unit range of dependence assumption in the proof of Theorem~\ref{t.mudecay}. In preparation, we observe that it is immediate from the definitions that, for every bounded convex domain $U \subseteq \Rd$,
\begin{equation}\label{e.mumeas}
F \mapsto \mu(U,F) \quad \mbox{is $\F(U)$--measurable.}
\end{equation}

\begin{lemma}
\label{l.concentrate}
There is a constant $C(d) > 0$ such that, for all $n, m\in \N$,
\begin{equation}
\label{e.concentrate}
\E \bracks{ \mu(Q_{m+n},F)^2 } \leq  \E \bracks{ \mu(Q_m,F) }^2 + C 3^{-nd/2} \E \bracks{ \mu(Q_m,F)^2 }.
\end{equation}
\end{lemma}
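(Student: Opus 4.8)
The plan is to combine the subadditivity of $\mu$ from \lref{musub} with a standard decoupling argument based on~\eqref{e.frd}. Let $\mathcal Q$ denote the family of the $3^{dn}$ triadic subcubes of $Q_{m+n}$ of level $m$. By \lref{musub},
\[
\mu(Q_{m+n},F) \le \fint_{Q_{m+n}}\mu(Q_m(x),F)\,dx = \frac{1}{3^{dn}}\sum_{Q\in\mathcal Q}\mu(Q,F),
\]
so squaring and taking expectations reduces the claim to estimating $\E\bracks{\mu(Q,F)\,\mu(Q',F)}$ over ordered pairs $(Q,Q')\in\mathcal Q\times\mathcal Q$. Note that each $Q\in\mathcal Q$ is an integer translate of $Q_m$, so by~\eqref{e.stat} every $\mu(Q,F)$ has the law of $\mu(Q_m,F)$; in particular, by~\eqref{e.Fubb} and \lref{mubound}, $0\le \mu(Q,F)\le 2^dK_0^d$ almost surely, so all moments appearing below are finite.

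Next I would split the double sum into \emph{far} pairs, with $\dist(Q,Q')\ge 1$, and \emph{close} pairs, with $\dist(Q,Q')<1$. For a far pair, $\mu(Q,F)$ is $\F(Q)$--measurable and $\mu(Q',F)$ is $\F(Q')$--measurable by~\eqref{e.mumeas}, so~\eqref{e.frd} makes them $\P$--independent, whence $\E\bracks{\mu(Q,F)\,\mu(Q',F)}=\E\bracks{\mu(Q_m,F)}^2$. Since this number is nonnegative and $|\mathcal Q\times\mathcal Q| = 3^{2dn}$, the far pairs contribute at most $3^{2dn}\,\E\bracks{\mu(Q_m,F)}^2$ to the double sum, i.e.\ exactly the main term $\E\bracks{\mu(Q_m,F)}^2$ after dividing by $3^{2dn}$. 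For a close pair, I would simply apply Cauchy--Schwarz together with~\eqref{e.stat} to get $\E\bracks{\mu(Q,F)\,\mu(Q',F)}\le \E\bracks{\mu(Q_m,F)^2}$.

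It then remains to count the close pairs. Since $m\in\N$, each cube in $\mathcal Q$ has side length $3^m\ge 1$; hence if $Q = 3^m k + Q_m$ and $Q' = 3^m k' + Q_m$ satisfy $\dist(Q,Q')<1\le 3^m$, one must have $|k_i-k'_i|\le 1$ for every $i$, which leaves at most $3^d$ choices of $Q'$ for each fixed $Q$, hence at most $3^d3^{dn}$ close ordered pairs. Their contribution to the double sum is therefore at most $3^d3^{dn}\,\E\bracks{\mu(Q_m,F)^2}$, i.e.\ at most $3^d3^{-dn}\,\E\bracks{\mu(Q_m,F)^2}$ after dividing by $3^{2dn}$. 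Combining the two bounds yields
\[
\E\bracks{\mu(Q_{m+n},F)^2}\le \E\bracks{\mu(Q_m,F)}^2 + 3^d3^{-dn}\,\E\bracks{\mu(Q_m,F)^2},
\]
and since $3^{-dn}\le 3^{-dn/2}$ for $n\in\N$, this is~\eqref{e.concentrate} with $C := 3^d$ (the argument in fact produces the stronger decay factor $3^{-dn}$).

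This is a routine decoupling estimate and I do not anticipate any genuine obstacle; the two points needing a little care are the finiteness of $\E\bracks{\mu(Q_m,F)^2}$, which legitimizes the use of Cauchy--Schwarz and which follows from~\eqref{e.Fubb} via \lref{mubound}, and the elementary observation that cubes of side length at least $1$ lying within distance $1$ of each other must be lattice neighbors, so that only $O(3^{dn})$ of the $3^{2dn}$ pairs fail to decouple.
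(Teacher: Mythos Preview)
Your argument is correct and follows the same decoupling strategy as the paper: apply \lref{musub}, expand the square of the average into a double sum over pairs of level-$m$ subcubes, use~\eqref{e.frd} and~\eqref{e.mumeas} to make the far pairs contribute exactly $\E[\mu(Q_m,F)]^2$, and bound the $O(3^{dn})$ close pairs by $\E[\mu(Q_m,F)^2]$ via Cauchy--Schwarz. The paper's proof differs only cosmetically: it first centers the average and then applies Young's inequality with a free parameter $\delta$, optimizing at $\delta=3^{-nd/2}$; this detour is unnecessary here (the cross term has zero expectation by stationarity) and is what costs the paper the extra factor $3^{nd/2}$. Your direct expansion yields the sharper decay $3^{-dn}$, which is indeed stronger than the stated~\eqref{e.concentrate}.
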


\begin{proof}
For every $m,n\in\N$, and $\delta > 0$,
\begin{align}\label{e.young}
& \lefteqn{\mu(Q_{m+n},F)^2  \leq \left( \fint_{Q_{m+n}} \mu(Q_m(x),F)\,dx \right)^2}  \\
& \qquad = \left( \fint_{Q_{m+n}} \left( \mu(Q_m(x),F) - \E\left[\mu(Q_m,F) \right] \right) \,dx \right)^2 + \E\left[\mu(Q_m,F) \right]^2 \nonumber \\ 
& \qquad \qquad +2 \E\left[\mu(Q_m,F) \right] \left( \fint_{Q_{m+n}} \left( \mu(Q_m(x),F) - \E\left[\mu(Q_m,F) \right] \right) \,dx \right)  \nonumber \\
& \qquad\leq \left( 1 + \frac1\delta\right) \left( \fint_{Q_{m+n}} \left( \mu(Q_m(x),F) - \E\left[\mu(Q_m,F) \right] \right) \,dx \right)^2 \nonumber  \\
&  \qquad \qquad+  (1+\delta)  \E\left[\mu(Q_m,F) \right]^2, \nonumber 
\end{align}
where the last line was obtained by Young's inequality. To estimate the expectation of the first term on the last line, we observe that
\begin{align}\label{e.varcov}
& \E \bracks{\parens{ \fint_{Q_{m+n}} \left( \mu(Q_m(x),F) - \E\bracks{\mu(Q_m,F)}\right) \,dx }^2} \\
& \qquad = \E\left[ \Bigg( 3^{-dn} \sum_{1 \leq i \leq 3^{dn}}  \left( \mu(Q_m(x_i),F) - \E\bracks{\mu(Q_m,F)}\right) \Bigg)^2\,\right] \nonumber \\
& \qquad = 3^{-2dn} \sum_{1 \leq i, j \leq 3^{dn}} \cov\left[\mu(Q_m(x_i),F)\,;\mu(Q_m(x_{j}),F)\right],\nonumber
\end{align}
where $\left\{ Q_m(x_i)\,:\, i = 1,\ldots 3^{dn}\right\}$ is an enumeration of the subcubes of $Q_{m+n}$ of the form $Q_m(x)$.  Due to~\eqref{e.frd},~\eqref{e.mumeas} and $m\geq 0$, we see that
\begin{equation}\label{e.usingfrd}
\cov\left[ \mu(Q_m(x),F) ; \mu(Q_m(y),F) \right]  = 0
\end{equation}
unless $\dist(Q_m(x),Q_m(y)) =0$, and so at most $3^{d(n+1)}\leq C3^{dn}$ terms in the sum on the last line of~\eqref{e.varcov} are nonzero. These we bound by H\"older's inequality and~\eqref{e.stat}:
\begin{equation*}\label{}
\left| \cov\left[ \mu(Q_m(x),F) ; \mu(Q_m(y),F) \right] \right| \leq \var\left[ \mu(Q_m,F) \right].
\end{equation*}
Using the previous line and~\eqref{e.varcov}, we estimate the expectation of~\eqref{e.young} by
\begin{equation*}
\E \bracks{\mu(Q_{m+n}(x),F)^2} \leq C (1 + \delta^{-1}) 3^{-dn} \var\bracks{\mu(Q_m,F)} + (1 + \delta) \E\bracks{\mu(Q_m,F)}^2.
\end{equation*}
Taking $\delta := 3^{-nd/2}$ and rearranging this expression yields the lemma.
\end{proof}

\section{Decay of $\mu$: the proof of \tref{mudecay}}
\label{S.mudecay}

In this section we present the proofs of Theorem~\ref{t.mudecay} and Corollary~\ref{c.mudeviation}. Throughout we assume that $\P$ satisfies~\eqref{e.Fubb},~\eqref{e.stat} and~\eqref{e.frd}. 

\smallskip

We begin by showing that, if $\E[\mu]$ and $\E[\mu_*]$ are balanced in the large-scale limit, then $\E[\mu]$ becomes strictly positive after adding a positive constant to~$F$.

\begin{lemma}
\label{l.pushup}
Suppose that, for every $s>0$,
\begin{equation} \label{e.bonk}
\lim_{n\to \infty} \E \left[ \mu(Q_n,F+s) \right] \geq \lim_{n\to \infty} \E \left[ \mu(Q_n,F_*-s) \right].
\end{equation}
Then there exists $c(d,\Lambda)>0$ such that, for every $m \in \Z$ and $s > 0$,
\begin{equation*}
\E \left[ \mu(Q_m,F + s) \right] \geq c s^d.
\end{equation*}
\end{lemma}
\begin{proof}
Throughout, $C$ and $c$ denote positive constants depending on $(d,\Lambda)$ which may differ in each occurrence. Set
\begin{equation*}
a := \sup_{s>0} \lim_{n \to \infty} \E \left[ \mu(Q_n,F_*-s)\right]  \geq 0.
\end{equation*}
Fix $s>0$, $m\in \Z$, $\delta > 0$ and select $M \geq m$ such that
\begin{equation*}
\E \left[\mu(Q_M,F_*-s)\right] \leq  a + \delta.
\end{equation*}
Let $v_*(\cdot,F) \in C(Q_M)$ denote the solution of
\begin{equation*}
\left\{ \begin{aligned}
&F_*(D^2 v_*,x) = s & \mbox{in} & \ Q_M, \\
& v_* = 0 & \mbox{on} & \ \partial Q_M.
\end{aligned} \right.
\end{equation*}
By Lemma 2.1 and Chebyshev's inequality,
\begin{equation} \label{e.chebbsie}
\P\left[ \inf_{Q_M} v_* \leq - C (a + \delta)^{1/d} 3^{2M} \right] \leq\P\left[  \mu(Q_M,F_*-s) \geq 2 (a+\delta) \right] \leq \frac{1}{2}.
\end{equation}
Next, observe that for $c(d,\Lambda)>0$, the function
\begin{equation*}
v(x,F) := -c s \left(\tfrac{1}{4} \cdot3^{2M} - |x|^2\right) - v_*(x,F)
\end{equation*}
satisfies $v(\cdot,F) \in S(Q_M,F+s)$. According to~\eqref{e.chebbsie}, we find that
\begin{equation*}
\P\left[ \inf_{Q_M} v \leq C ((a + \delta)^{1/d} - c s) 3^{2M} \right]
\geq \frac{1}{2}.
\end{equation*}
Using this and the fact that $v \geq 0$ on $\partial Q_M$ and applying Lemma~2.1, we find that 
\begin{equation*}
\P \left[ \mu(Q_M, F+s) \geq  c s^d - C(a+\delta)\right] \geq \tfrac{1}{2},
\end{equation*}
By (2.11), we find that
\begin{equation*}
\E \left[ \mu(Q_m,F+s)\right] \geq \E \left[ \mu(Q_M,F+s) \right] \geq c s^d - C(a + \delta).
\end{equation*}
We also have, by the assumption~\eqref{e.bonk} and the fact that 
\begin{equation*} \label{}
s\mapsto \E \left[ \mu(Q_n,F+s) \right] \quad \mbox{is nondecreasing} 
\end{equation*}
that
\begin{equation*}
\E \left[ \mu(Q_m,F+s) \right] \geq \sup_{s'>0} \lim_{n \to \infty} \E \left[ \mu(Q_n,F-s') \right]= a.
\end{equation*}
We conclude by sending $\delta \to 0$ and observing that $\max\{a, cs^d - Ca \} \geq c s^d$.
\end{proof}

We now give the proof of Theorem~\ref{t.mudecay}.

\begin{proof}[{\bf Proof of \tref{mudecay}}]
According to Lemma~\ref{l.mubalance}, by subtracting a constant from~$F$, we may assume that, for every~$s>0$,
\begin{equation} \label{e.assump1}
\lim_{n\to \infty} \E \left[ \mu(Q_n,F+s) \right] \geq \lim_{n\to \infty} \E \left[ \mu(Q_n,F_*-s) \right]
\end{equation}
and 
\begin{equation} \label{e.assump2}
\lim_{n\to \infty} \E \left[ \mu(Q_n,F-s) \right] \leq \lim_{n\to \infty} \E \left[ \mu(Q_n,F_*+s) \right].
\end{equation}
Under this assumption, we will prove that~\eqref{e.muvariance} holds with $\overline{s}=0$, that is, for some constant~$\tau(d,\Lambda)<1$, 
\begin{equation}
\label{e.muvariance2}
\E \bracks{ \mu(Q_m,F)^2 + \mu(Q_m,F_* )^2} \leq C K_0^{2d} \tau^{m}.
\end{equation}
The estimate~\eqref{e.muvariance2} confirms the existence of $\bar{s}$ as in the statement of the theorem. 
The uniqueness of~$\bar{s}=\bar{s}(\P)$ is then an immediately consequence of Lemma~\ref{l.pushup}.

\smallskip

The proof of~\eqref{e.muvariance2} is broken into four steps. As usual, $C$ and $c$ denote positive constants depending only on $(d,\Lambda)$ which may differ in each instance.

\smallskip

\emph{Step 1.} For each $m,k\in \N$, we define the quantities
\begin{equation*}
\begin{aligned}
a(m,k) & := \E\bracks{ \mu(Q_m,F+2^{-k}) }^2, & b(m,k) & := \E\bracks{ \mu(Q_m,F+2^{-k})^2 }, \\
a_*(m,k) & := \E\bracks{ \mu(Q_m,F_*+2^{-k}) }^2, & b_*(m,k) & := \E\bracks{ \mu(Q_m,F_*+2^{-k})^2 }.
\end{aligned}
\end{equation*}
According to~\eqref{e.mono1},~\eqref{e.mono2} and~\eqref{e.monot}, each of these quantites is nonincreasing in both variables $m$ and $k$. By Lemma~\ref{l.pushup} and the assumption that~\eqref{e.assump1} and~\eqref{e.assump2} hold for every $s>0$, we have, for every $m,k\in\N$,
\begin{equation*}
c 2^{-2dk} \leq a(m,k) \leq b(m,k)
\end{equation*}
and 
\begin{equation*}
c 2^{-2dk} \leq a_*(m,k) \leq b_*(m,k).
\end{equation*}
Fix $n_1\in \N$ and $\delta_1>0$ to be selected below. 

\smallskip

\emph{Step 2.}
We claim that there exists $m\in \N$ such that 
\begin{equation} \label{e.mbnds1}
n_1 \leq m \leq n_1 + \frac{4n_1}{\delta_1} \log(C 2^{2dk} (b(0,k) + b_*(0,k)))
\end{equation}
and
\begin{equation*}
\begin{aligned}
a(m - n_1, k) & \leq (1 + \delta_1) a(m,k), & b(m - n_1, k) & \leq (1 + \delta_1) b(m,k), \\
a_*(m - n_1, k) & \leq (1 + \delta_1) a_*(m,k), & b_*(m - n_1, k) & \leq (1 + \delta_1) b_*(m,k).
\end{aligned}
\end{equation*}
To see this, we use the estimates from Step 1 to obtain that, for every $M\in \N$,
\begin{equation*}
\begin{aligned}
\prod_{j = 1}^{4M+1} \frac{a((j-1) n_1,k)}{a(j n_1,k)} & \leq C 2^{2dk} b(0,k) & \prod_{j = 1}^{4M+1} \frac{b((j-1) n_1,k)}{b(j n_1,k)} & \leq C 2^{2dk} b(0,k) \\
\prod_{j = 1}^{4M+1} \frac{a_*((j-1) n_1,k)}{a_*(j n_1,k)} & \leq C 2^{2dk} b_*(0,k) & \prod_{j = 1}^{4M+1} \frac{b_*((j-1) n_1,k)}{b_*(j n_1,k)} & \leq C 2^{2dk} b_*(0,k). \\
\end{aligned}
\end{equation*}
Here is some more detail on the derivation of the first inequality (the other three are obtained similarly):
\begin{equation*} \label{}
\prod_{j = 1}^{4M+1} \frac{a((j-1) n_1,k)}{a(j n_1,k)}  = \frac{a(4Mn_1,k)}{a(n_1,k)} \leq C2^{2dk}b(4Mn_1,k) \leq C2^{2dk}b(0,k).
\end{equation*}
Since each factor in these products is at least $1$, by the monotonicity of the four quantities in the first variable, it follows that, for some $1\leq j \leq 4M+1$,
\begin{equation*}
\begin{aligned}
\frac{a((j-1) n_1,k)}{a(j n_1),k)} & \leq (C 2^{2dk} b(0,k))^{1/M}, & \frac{b((j-1) n_1,k)}{b(j n_1),k)} & \leq (C 2^{2dk} b(0,k))^{1/M}, \\
\frac{a_*((j-1) n_1,k)}{a_*(j n_1),k)} & \leq (C 2^{2dk} b_*(0,k))^{1/M}, & \frac{b_*((j-1) n_1,k)}{b_*(j n_1),k)} & \leq (C 2^{2dk} b_*(0,k))^{1/M}. \\
\end{aligned}
\end{equation*}
We conclude the proof of the claim by taking $m := j n_1$ and setting
\begin{equation*}
M := \left\lceil \frac{\log(C 2^{2dk} (b(0,k)+b_*(0,k)))}{ \log(1 + \delta_1)} \right\rceil.
\end{equation*}
Here $\lceil r \rceil$ denotes, for $r\in \R$, the smallest integer not smaller than $r$.

\smallskip

\emph{Step 3.} We show that
\begin{equation}
\label{e.bstep}
b(m,k) + b_*(m,k) \leq C2^{-2dk}.
\end{equation}
Let $n_0\in \N$ and $\delta_0 > 0$ be the constants from the statement of \lref{contract} and assume $n_1>n_0$. We first apply \lref{concentrate} to get
\begin{equation} \label{e.blaggardy}
b(m-n_0,k) \leq C 3^{-(n_1-n_0) d/2} b(m-n_1,k) + a(m - n_1,k).
\end{equation}
By Step 2, we have
\begin{equation*}
b(m-n_1,k) \leq (1 + \delta_1) b(m,k) \leq (1 + \delta_1) b(m-n_0,k)
\end{equation*}
and
\begin{equation*}
a(m-n_1,k) \leq (1 + \delta_1) a(m,k).
\end{equation*}
Substituting these into~\eqref{e.blaggardy} and rearranging, we obtain
\begin{equation*}
b(m-n_0,k) \leq C 3^{-(n_1-n_0)d/2} (1 + \delta_1)b(m-n_0,k) + (1+\delta_1) a(m,k).
\end{equation*}
Now select $0<\delta_1 (d,\Lambda)\leq\frac12$ such that $(1+\delta_1) (1-\delta_1)^{-1} \leq 1+\delta_0$ and then take $n_1(d,\Lambda)$ large enough that $C3^{-(n_1-n_0)d/2} < \delta_1$ to obtain
\begin{equation*}
b(m-n_0,k) \leq (1 + \delta_0) a(m,k).
\end{equation*}
By an identical argument, we also obtain
\begin{equation*}
b_*(m-n_0,k) \leq (1 + \delta_0) a_*(m,k).
\end{equation*}
Now an application of Lemma~\ref{l.contract} yields~\eqref{e.bstep}. Observe that $n_1$ may be chosen so that $n_0 < n_1 \leq n_0 + C$. Therefore, by~\eqref{e.mbnds1} we have
\begin{equation*} \label{e.mbnds2}
n_0 \leq m \leq n_0 + C \log\left(C2^{2dk} (b(0,k) + b_*(0,k))\right).
\end{equation*}

\emph{Step 4.} We complete the proof by iterating Step 3. We define $\{ m_k \}_{k=0}^\infty \subseteq \N$ inductively as follows. Take $m_0 := 0$ and, given $m_k$, let $m_{k+1}$ be least integer $m$ larger than $m_{k}$ such that~\eqref{e.bstep} holds. According to Step 3, we have
\begin{equation*}
m_{k+1} - m_k \leq C \log \left(C2^{2dk}(b(m_k,k) + b_*(m_k,k))\right).
\end{equation*}
Since
\begin{equation*}
b(m_{k},k) + b_*(m_{k},k) \leq b(m_{k},k-1) + b_*(m_{k},k-1) \leq C2^{-2d(k-1)} \leq C2^{-2dk},
\end{equation*}
we obtain, for every $k\in \N$ with $k\geq 1$,
\begin{equation*}
m_{k+1} \leq  m_k + C.
\end{equation*}
Using \lref{mubound} to estimate the first step, we have
\begin{equation*}
m_1 \leq C \log\left( CK_0^{2d}\right).
\end{equation*}
Finally, we apply~\eqref{e.monot} to obtain
\begin{equation*}
\E \bracks{ \mu(Q_{m_k},F)^2 + \mu(Q_{m_k},F_*)^2 } \leq b(m_k,k) + b_*(m_k,k) \leq C2^{-2dk}.
\end{equation*}
Using the monotonicity of $s\mapsto \E\left[ \mu(Q,F+s)^2\right]$ to interpolate for $m$'s in between successive $m_k$'s, we obtain \eref{muvariance}.
\end{proof}

\begin{proof}[{\bf Proof of \cref{mudeviation}}]
Let $\bar{s}=\bar{s}(\P)$ be as in Theorem~\ref{t.mudecay}. We may suppose without loss of generality that $\bar{s}=0$.

\smallskip

We adapt the classical concentration argument as in for example the proofs of Bernstein's inequalities. Let $\{ Q^j_{n+1} : 1 \leq j \leq 3^{dm} \}$ be an enumeration of the subcubes of $Q_{m+n+1}$ of the form $Q_{n+1}(x)$. Next, for each $1\leq j \leq 3^{dm}$, we let $\{ Q_n^{j,i} \,:\, 1 \leq i \leq 3^d\}$ be an enumeration of the subcubes of $Q_{n+1}^j$ of the form $Q_n(x)$, such that, for every $1\leq j,j'\leq 3^{dm}$ and $1\leq i \leq 3^d$, the translation which maps $Q_{n+1}^j$ onto $Q_{n+1}^{j'}$ also maps $Q_{n}^{j,i}$ onto $Q_{n}^{j',i}$. In particular, for every $1 \leq i \leq 3^d$ and $1 \leq j < j' \leq 3^{dm}$, we have $\dist(Q_n^{i,j},Q_n^{i,j'}) \geq 1$ and therefore, by~\eqref{e.frd} and~\eqref{e.mumeas},
\begin{equation}
\label{e.expdist}
F \mapsto \mu(Q_{n}^{j,i},F) \quad \mbox{and} \quad F \mapsto \mu(Q_{n}^{j',i},F) \quad \mbox{are independent.}
\end{equation}
Using this enumeration of subcubes, we compute
\begin{align*}
& \ \hspace{-2em} \log \E \bracks{\exp\parens{t3^{dm}  \mu(Q_{m+n+1},F)}} \\
& \leq \log \E \bracks{ \prod_{1 \leq i \leq 3^d} \prod_{1 \leq j \leq 3^{dm}} \exp\parens{t3^{-d} \mu(Q_n^{j,i},F)}} & \mbox{(by \eqref{e.musub})}\\
& \leq 3^{-d} \sum_{1 \leq i \leq 3^d} \log  \E \bracks{ \prod_{1 \leq j \leq 3^{dm}} \exp\parens{t\mu(Q_n^{j,i},F)}} & \mbox{(H\"older ineq.)} \\
& = 3^{-d} \sum_{1 \leq i \leq 3^d} \log  \prod_{1 \leq j \leq 3^{dm}} \E \bracks{  \exp\parens{t\mu(Q_n^{j,i},F)}} & \mbox{(by \eqref{e.expdist})} \\
& = 3^{dm} \log \E \bracks{  \exp\parens{t\mu(Q_n,F)}}. & \mbox{(by~\eqref{e.stat})}
\end{align*}   
Take $t:= 1/(2K_0^d)$ and estimating the last term using the elementary inequalities
\begin{equation*}\label{}
\left\{ \begin{aligned}
& \exp(s) \leq 1+2s && \mbox{for all} \ 0\leq s \leq 1, \\
& \log(1+s) \leq s && \mbox{for all} \ s\geq 0,
\end{aligned} \right.
\end{equation*}
and the fact that $\P\left[ \mu(Q_n,F)  \leq (2 K_0)^d \right] =1$ by~\eqref{e.Fubb} and~Lemma~\ref{l.mubound}, to obtain
\begin{align*}  
\log \E \bracks{\exp\parens{ 3^{dm} (2K_0^d)^{-1}\mu(Q_{m+n+1},F)}} \leq  2 \cdot 3^{dm} \E\left[ (2 K_0)^{-d}\mu(Q_n,F) \right].
\end{align*}
Theorem~\ref{t.mudecay} yields
\begin{equation*}\label{}
\log \E \bracks{\exp\parens{3^{dm} (2 K_0)^{-d} \mu(Q_{m+n+1},F)}} \leq C 3^{dm} \tau^n.
\end{equation*}
Finally, an application of Chebyshev's inequality gives
\begin{equation*} \label{}
 \P\bracks{ \mu(Q_{m+n},F) \geq t K_0^d } \leq \exp\left( -3^{dm} \left( t - C \tau^n \right) \right).
\end{equation*}
Replacing $t$ with $C \tau^nt$, we get
\begin{equation*} \label{}
 \P\bracks{ \mu(Q_{m+n},F) \geq t K_0^d \tau^n } \leq \exp\left( -c t3^{dm}  \tau^n  \right).
\end{equation*}
We obtain the first assertion of the corollary from this expression by choosing 
\begin{equation*} \label{}
n:= \left\lfloor \frac{(d-p)m}{p+a} \right\rfloor \quad \mbox{and} \quad \alpha:= \frac{a(d-p)}{d+a}, \quad \mbox{where} \quad a:= \frac{|\log \tau|}{\log 3}
\end{equation*}
and replacing $m+n$ by $m$. A symmetric argument yields the same estimate for $F_*$ in place of $F$.
\end{proof}

\section{The proof of Theorem~\ref{t.full}}
\label{S.full}

In this section we use the decay of $\mu$ to control the difference $\sup_{x\in U} |u^\ep-u|$ between solutions of the Dirichlet problem for the heterogeneous and homogeneous problems, enabling us to deduce Theorem~\ref{t.full} from Theorem~\ref{t.mudecay}. The argument is entirely deterministic and the precise statement is given in Proposition~\ref{p.snapgrid} below, which states that, if $\sup_{x\in U} (u-u^\ep)$ is relatively large, then we can find a matrix $A^*\in\Sy$ with $\overline F(A^*) \leq 0$, where $A^*$ is chosen from a preselected finite list, and a large cube $Q^*\subseteq U$, also chosen from a preselected finite list of such cubes, such that $\mu(Q^*,F_{A^*})$ is also relatively large. Recall that $F_A\in \Omega$ is defined in~\eqref{e.Ftrans}.

If the homogenized limit function $u$ is $C^2$, then the idea is fairly straightforward: near a point where $u-u^\ep$ has a local maximum, we may essentially replace $u$ by a quadratic function. The Hessian of this quadratic function is $A^*$, and we use Lemma~\ref{l.muabp} with the difference of $u^\ep$ and the quadratic function as the witness, to conclude that $\mu(Q^*,F_{A^*})$ must be relatively large in some (rescaled, large) cube~$Q^*$. A technical difficulty arises because solutions of uniformly elliptic equations are not in general $C^2$. To resolve this issue, we rely on the regularity theory, in particular the $W^{2,\sigma}$ and $W^{3,\sigma}$ estimates (here $\sigma > 0$ is tiny, see~\cite{CC} and~\cite[Lemma 5.2]{ASS}) which give quadratic expansions for solutions of constant-coefficient equations in sets of large measure. This is essentially the same idea as the one used by Caffarelli and Souganidis in Sections~5 and~6 of~\cite{CS}. 

We begin with a simple ``double--variable" variation of Lemma~\ref{l.mugood}. It gives a lower bound for the Lebesgue measure in $\Rd\times\Rd$ of the set of points at which we can touch the difference of a subsolution $u$ and supersolution $v$ by planes, after doubling the variables and adding the usual quadratic penalization term.

\begin{lemma}
\label{l.smear}
Let $U \subseteq \Rd$ be open, $K \geq 0$ and $u,v\in C(\overline U)$ satisfy
\begin{equation*}\label{e.smeareq}
\Pu^-_{1,\Lambda} (D^2u) -K \leq 0 \leq \Pu^+_{1,\Lambda} (D^2v) + K \quad \mbox{in} \ U.
\end{equation*}
Assume~$\delta > 0$, $V=\overline V\subseteq U\times U$ and  $W \subseteq \Rd\times\Rd$ such that, for every $(p,q) \in W$,
\begin{multline*}\label{}
\sup_{(x,y) \in V} \left( u(x) - v(y) - \tfrac1{2\delta} |x-y|^2 - p\cdot x - q\cdot y \right) \\ =\sup_{(x,y)\in U\times U} \left( u(x) - v(y) - \tfrac1{2\delta} |x-y|^2 - p\cdot x - q\cdot y \right).
\end{multline*}
Then there exists $C=C(d,\Lambda)>0$ such that 
\begin{equation*} \label{}
\left| W \right| \leq \left(2K+C\delta^{-1} \right)^{2d} \left| V \right|.
\end{equation*}
\end{lemma}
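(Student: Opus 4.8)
The plan is to reduce \lref{smear} to the single-variable estimate \eref{sharpie} of \lref{mugood} by viewing the doubled-variable problem as an ordinary subsolution problem in $\Rd \times \Rd = \R^{2d}$. Set $z = (x,y) \in \R^{2d}$ and define $\Phi(z) := u(x) - v(y) - \tfrac{1}{2\delta}|x-y|^2$, so that $\Phi \in C(\overline{U \times U})$. The hypothesis says precisely that every $(p,q) \in W$ lies in $\partial\Gamma_{-\Phi}(V)$ when $\Gamma$ is taken with respect to the domain $U \times U$ — or equivalently, writing $\Psi := -\Phi$, that $W \subseteq \partial\Gamma_\Psi(V)$, since a pair $(p,q)$ for which $z \mapsto \Psi(z) + (p,q)\cdot z$ attains its infimum over $U\times U$ at a point of $V$ is exactly a subgradient of $\Gamma_\Psi$ at that point (and the contact set of $\Psi$ with $\Gamma_\Psi$ at such a point meets $V$). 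So the goal becomes: bound $|\partial\Gamma_\Psi(V)|$.

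The key step is to check that $\Psi$ satisfies a one-sided Pucci inequality of the form $\Pu^+_{1,\Lambda'}(D^2\Psi) \geq -(2K + C\delta^{-1})$ in $U \times U$ for a suitable ellipticity $\Lambda'$ depending only on $d$ and $\Lambda$. Formally, $D^2\Psi$ in the block $z = (x,y)$ variables is $\begin{pmatrix} -D^2u + \delta^{-1} I & -\delta^{-1}I \\ -\delta^{-1}I & D^2v + \delta^{-1}I \end{pmatrix}$; the off-diagonal and the $\delta^{-1}$ parts contribute a bounded (by $C\delta^{-1}$) amount to any Pucci operator since that matrix has operator norm $O(\delta^{-1})$, while $\Pu^+(-D^2u) = \Pu^-(D^2u) \geq$-part and $\Pu^+(D^2 v)$-part are each controlled by $K$ via the hypotheses on $u,v$. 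This formal computation has to be made rigorous in the viscosity sense — touch $\Psi$ from below at $z_0 = (x_0,y_0)$ by a smooth $\phi(x,y)$, freeze the quadratic penalization, and deduce from $\phi(\cdot, y_0) - \tfrac{1}{2\delta}|\cdot - y_0|^2 \leq -u$ touching from above and $\phi(x_0,\cdot) + \tfrac{1}{2\delta}|x_0 - \cdot|^2 \geq v$ touching... — but this is the standard doubling-of-variables viscosity argument (the same device invoked in the footnote of \lref{contract}), so I would state it and move on. Once we know $\Psi$ is a subsolution of $\Pu^+_{1,\Lambda'}(D^2\Psi) \geq -(2K + C\delta^{-1})$ on the open set $U \times U \subseteq \R^{2d}$, rescale: $\widetilde\Psi := \Psi / (2K + C\delta^{-1})$ satisfies $\Pu^+_{1,\Lambda'}(D^2\widetilde\Psi) \geq -1$, so \eref{sharpie} (with dimension $2d$ and ellipticity $\Lambda'$) gives $|\partial\Gamma_{\widetilde\Psi}(U\times U)| \leq 2^{2d}|\{\widetilde\Psi = \Gamma_{\widetilde\Psi}\}| \leq 2^{2d}|U\times U|$. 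Scaling back, $\partial\Gamma_\Psi = (2K + C\delta^{-1})\,\partial\Gamma_{\widetilde\Psi}$, so $|\partial\Gamma_\Psi(U\times U)| \leq (2K + C\delta^{-1})^{2d}\,2^{2d}|U\times U|$.

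This almost gives the claim but with $|U \times U|$ in place of $|V|$; to get $|V|$ we need the localized form. I would instead apply the argument of \lref{mugood}/\lref{envelopefacet} directly: since $W \subseteq \partial\Gamma_\Psi(V)$ and $V$ is closed, \lref{envelopefacet} (applied in dimension $2d$) tells us $\partial\Gamma_\Psi$ has zero-measure image on the part of $U\times U$ where $\Psi > \Gamma_\Psi$, so only the contact set inside $V$ contributes; then \lref{envelopec11} plus the Lebesgue differentiation / covering argument from the proof of \eref{sharpie} bounds $|\partial\Gamma_\Psi(V)| = |\partial\Gamma_\Psi(\{\Psi = \Gamma_\Psi\}\cap V)| \leq 2^{2d}(2K + C\delta^{-1})^{2d}|V|$, after absorbing the $2^{2d}$ and adjusting $C$. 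The main obstacle is the rigorous viscosity justification of the Pucci inequality for $\Psi$ in the doubled variables — the rest is a mechanical transcription of \lref{mugood} into dimension $2d$ with the scaling factor carried along; I expect the final constant $C(d,\Lambda)$ to emerge from the ellipticity $\Lambda'$ and the $2^{2d}$ factor.
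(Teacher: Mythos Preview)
Your approach is correct and genuinely different from the paper's. The paper never passes to $\R^{2d}$: instead it proves directly that the ``touching map'' $(p,q)\mapsto (x,y)$ has a Lipschitz inverse with constant $2K+C\delta^{-1}$, by applying \lref{envelopec11} in $\R^d$ to the single-variable function $\tilde u(x)=-u(x)+\tfrac{1}{2\delta}|x-y_1|^2$ (with $y_1$ frozen) and then its counterpart in $y$. This yields $|p_1-p_2|+|q_1-q_2|\le (2K+C\delta^{-1})(|x_1-x_2|+|y_1-y_2|)+o(r)$, from which the measure estimate follows by elementary properties of Lebesgue measure. Your route --- show $\Psi$ is a $2d$-dimensional Pucci supersolution and invoke \lref{mugood} wholesale --- is more conceptual and recycles the earlier lemmas; the paper's route stays in $\R^d$ and makes the factor $2K+C\delta^{-1}$ appear transparently without any need to discuss ellipticity in $2d$ variables.

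One point in your sketch needs sharpening. Freezing one variable at a time controls only the diagonal blocks $D^2_x\tilde\phi$ and $D^2_y\tilde\phi$ of the test function (after subtracting the penalization); the mixed block $D^2_{xy}\tilde\phi$ is genuinely unconstrained, and your line about ``the off-diagonal parts contributing a bounded amount'' does not address this. The reason it is harmless is the block inequality
\[
\Pu^+_{1,\Lambda}\begin{pmatrix} A & B \\ B^t & C\end{pmatrix}\ \ge\ \Pu^+_{1,\Lambda}(A)+\Pu^+_{1,\Lambda}(C),
\]
valid for \emph{arbitrary} $B$, which follows from $\Pu^+_{1,\Lambda}(M)=\max_{I\le N\le\Lambda I}\bigl(-\tr(NM)\bigr)$ by restricting to block-diagonal $N$. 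With this, your viscosity verification goes through: $\Pu^+(D^2_x\tilde\phi)\ge -K$ and $\Pu^+(D^2_y\tilde\phi)\ge -K$ give $\Pu^+(D^2\tilde\phi)\ge -2K$ in $\R^{2d}$, and adding back the positive semidefinite penalization matrix costs at most $C\delta^{-1}$. You do not even need \lref{envelopefacet}, since every $(p,q)\in W$ already corresponds to a contact point in $V$.
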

\begin{proof}
As usual, $C>0$ denotes a positive constant depending on $(d,\Lambda)$ which may differ in each occurrence. It suffices to show that, for every pair $(x_i,y_i,p_i,q_i) \in U\times U \times \Rd \times \Rd$, $i=1,2$, such that
\begin{multline} \label{e.doubleyourass}
u(x_i) - v(y_i) - \tfrac1{2\delta} |x_i-y_i|^2 - p_i\cdot x_i - q_i\cdot y_i \\ =\sup_{(x,y)\in U\times U} \left( u(x) - v(y) - \tfrac1{2\delta} |x-y|^2 - p_i\cdot x - q_i\cdot y \right),
\end{multline}
and $|x_1-x_2|^2 + |y_1-y_2|^2 \leq r^2$, we have
\begin{equation} \label{e.doubleLIP}
\left( \left| p_1-p_2 \right|^2 + |q_1-q_2|^2 \right)^{1/2} \leq \left( 2K+C/\delta \right) r + o(r) \quad \mbox{as} \ r \to 0.
\end{equation}
Indeed, from~\eqref{e.doubleLIP} the conclusion follows at once from elementary properties of Lebesgue measure.

We first observe that, by Lemma~\ref{l.envelopec11}, if $s:= |x_1-x_2| < \tfrac12\dist(x_1,\partial U)$, then
\begin{equation} \label{e.slooopies}
\partial \Gamma_{\widetilde u} (B_s(x_1)) \subseteq B_{(2K+C/\delta)(s+Cs^3)}(-p_1),
\end{equation}
where we have defined
\begin{equation*} \label{}
\widetilde u(x):= -u(x) + \frac{1}{2\delta}|x-y_1|^2.
\end{equation*}
Indeed, we just need to check the hypotheses of the lemma. It is clear that $\widetilde u$ satisfies
\begin{equation*} \label{}
\Pu^+_{1,\Lambda}(D^2\widetilde u) \geq -\left( K + d\Lambda \delta^{-1}\right)  \geq - \left( K + C\delta^{-1}\right) \quad \mbox{in} \ U.
\end{equation*}
According to ~\eqref{e.doubleyourass} with $i=1$, we have $\widetilde u(x_1) = \Gamma_{\widetilde u}(x_1)$ and $-p_1 \in \partial \Gamma_{\widetilde u}(x_1)$. Thus Lemma~\ref{l.envelopec11} gives~\eqref{e.slooopies}.

We next check that
\begin{equation} \label{e.gooochies}
-p_2 + \frac{y_2-y_1}{\delta} \in \partial \Gamma_{\widetilde u}(x_2).
\end{equation}
In fact, this follows immediately from~\eqref{e.doubleyourass} with $i=2$, since the latter implies
\begin{equation*} \label{}
x\mapsto u(x) - \frac1{2\delta}|x-y_2|^2 -p_2\cdot x\quad \mbox{achieves its supremum over $U$ at} \ x_2,
\end{equation*}
and
\begin{equation*} \label{}
u(x) - \frac1{2\delta}|x-y_2|^2 -p_2\cdot x = -\widetilde u(x) - \left( p_2 - \frac{y_2-y_1}{\delta}  \right) \cdot x + \frac1{2\delta}\left( |y_2|^2 - |y_1|^2 \right).
\end{equation*}
According to~\eqref{e.slooopies} and~\eqref{e.gooochies},
\begin{equation*} \label{}
\left| p_1 - p_2 + \frac{y_2-y_1}{\delta} \right| \leq (2K+C/\delta)\left(|x_1-x_2|+C|x_1-x_2|^3\right).
\end{equation*}
Rearranging, we obtain
\begin{equation*} \label{}
\left| p_1 - p_2 \right| \leq (2K+C/\delta)(|x_1-x_2|+C|x_1-x_2|^3) + \tfrac{1}{\delta} |y_1-y_2|.
\end{equation*}
By symmetry, we also get
\begin{equation*} \label{}
\left| q_1 - q_2 \right| \leq (2K+C/\delta)(|y_1-y_2|+C|y_1-y_2|^3)+ \tfrac{1}{\delta} |x_1-x_2|
\end{equation*}
and combining the last two lines yields~\eqref{e.doubleLIP}. This completes the proof.
\end{proof}

The next proposition is the deterministic link between Theorems~\ref{t.full} and~\ref{t.mudecay}. Its proof is based on the comparison principle, quantified by the $W^{2,\sigma}$ and $W^{3,\sigma}$ estimates (these can be essentially found in~\cite{CC,ASS}; see also Remark~\ref{r.outsource} below).

\begin{proposition}
\label{p.snapgrid}
Suppose $U \subseteq \R^d$ is a smooth bounded domain and the functions $u, v \in C(\overline U)$ satisfy
\begin{equation*}
\begin{cases}
G(D^2u) = f = F\left(D^2v,x\right) & \mbox{in } U \\
u = g = v & \mbox{on } \partial U,
\end{cases}
\end{equation*}
where $G\in \overline \Omega(\Lambda)$, $F \in \Omega(\Lambda)$, $g \in C^{0,1}(\partial U)$, and $f\in C^{0,1}(U)$ satisfy
\begin{equation*}
|G(0)| + \sup_{x\in U} |F(0,x)| + \| g \|_{C^{0,1}(\partial U)} + \| f \|_{C^{0,1}(U)} \leq K_0 < + \infty.
\end{equation*}
There is an exponent $\kappa \in (0,1)$ depending only on $d$ and $\Lambda$ and constants $C, c > 0$ depending only on $d$,  $\Lambda$, and $U$ such that, for all $0<l\leq h$ such that
\begin{equation} \label{e.bigerr}
E := \sup_{x \in U} (u - v)(x) \geq C K_0 h^\kappa > 0,
\end{equation}
there exist $A^* \in \Sy$ and $y^* \in U$ which satisfy the following:
\begin{itemize}
\item $|A^*| \leq h^{\kappa-1}$,
\item $l^{-1} A^*$ and $h^{-1} y^*$ have integer entries, and
\item $\mu(Q^*,F_{A^*} - G(A^*)) \geq c E^{d}$, where $Q^*:=y^*+ 2 h Q_0.$ 
\end{itemize}
\end{proposition}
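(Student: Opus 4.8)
\emph{Strategy and reductions.} The argument is entirely deterministic. After dividing the equations by $K_0$ we may assume $K_0=1$ (this rescales $u,v,E$ and $A^*$, preserves $\Lambda$, and leaves the conclusion unchanged), and after an affine rescaling we may assume $U$ lies between two fixed concentric balls (this is where the dependence on $U$ enters). The global regularity theory for viscosity solutions with bounded right-hand side and $C^{0,1}$ boundary data on a smooth domain gives $\|u\|_{C^{0,\alpha}(\overline U)}+\|v\|_{C^{0,\alpha}(\overline U)}\le C$ for some $\alpha=\alpha(d,\Lambda)\in(0,1]$; in particular $E\le C$ and, since $u-v$ vanishes on $\partial U$, the set $\{u-v\ge E/2\}$ contains a ball of radius $\gtrsim E^{1/\alpha}$. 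The plan is to produce a \emph{single} supersolution $w$ of $F_{A^*}-G(A^*)$ on a suitable cube $Q^*=y^*+hQ_0$ with $y^*\in h\Z^d$, such that $|\partial\Gamma_w(Q^*)|\ge cE^dh^d$; then $\mu(Q^*,F_{A^*}-G(A^*))\ge|\partial\Gamma_w(Q^*)|/|Q^*|\ge cE^d$ directly from the definition of $\mu$ (and, when convenient, from \lref{muabp} in the form $\inf_{\partial Q^*}w\le\inf_{Q^*}w+Ch^2\mu(Q^*,\cdot)^{1/d}$). The exponent $\kappa\in(0,1)$ will be fixed, small, at the very end, in terms of the exponent $\sigma=\sigma(d,\Lambda)>0$ appearing in the $W^{2,\sigma}$ and $W^{3,\sigma}$ estimates.

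\emph{Locating the base point, the matrix, and the cube.} First, the Alexandrov--Bakelman--Pucci estimate applied to the nonnegative function $E-(u-v)$, which vanishes at its maximum and equals $E$ on $\partial U$, gives that its convex envelope over $U$ has $|\partial\Gamma_{E-(u-v)}(U)|\ge cE^d$. Since this mass could a priori be carried by a single ``vertex'' of the envelope, I would use the double--variable device of \lref{smear} to replace the maximum of $u-v$ by a positive--measure set of maximizers: choosing $\delta>0$ small enough (using the Lipschitz-type bound on $u,v$) that maximizers of $u(x)-v(y)-\tfrac1{2\delta}|x-y|^2-p\cdot x-q\cdot y$ over $\overline U\times\overline U$ stay interior, and taking $W$ to be the set of slopes $(p,q)$ for which this supremum is $\ge E/2$ and attained interiorly, $W$ contains a ball of radius $\gtrsim E$, so \lref{smear} forces the corresponding set of maximizers $V$ to satisfy $|V|\ge (K+C/\delta)^{-2d}|W|>0$. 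Every $(\bar x,\bar y)\in V$ obeys $u(\bar x)-v(\bar y)\ge E/2$, $|\bar x-\bar y|\le C\sqrt{\delta E}$, and $D^2u(\bar x)\le\tfrac1\delta I$ wherever it exists (the last from touching $u$ by a paraboloid). Second, because $G\in\overline\Omega(\Lambda)$ is constant-coefficient and $f\in C^{0,1}$, the interior $W^{2,\sigma}$ and $W^{3,\sigma}$ estimates (see \cite{CC} and \cite[Lemma~5.2]{ASS}) yield, for each threshold $t$, a set of ``good'' points of measure at least $(1-Ct^{-\sigma})|U|$ at which $u$ has a second-order Taylor polynomial $P_z$ with $|D^2u(z)|\le t$, $G(D^2u(z))=f(z)$, and remainder bounded by $\eta_0|x-z|^2$ on $B_{r_0}(z)$ (with $\eta_0,r_0$ depending on the measure fraction). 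Choosing $t=t(h)$ of size $h^{\kappa-1}$ makes this bad set smaller than the projection of $V$ onto the $x$-variable, so some $(\bar x,\bar y)\in V$ has $z_0:=\bar x$ good; put $A:=D^2u(z_0)$, so $|A|\le h^{\kappa-1}$ and $G(A)=f(z_0)$. Finally, let $\eta:=Ch$ and let $A^*$ be the matrix with entries in $l\Z$ nearest to $A+\eta I$; then $|A^*|\le h^{\kappa-1}$, and by uniform ellipticity of $G$ together with $f\in C^{0,1}$ one gets $G(A^*)\le G(A)-\tfrac12\eta d\le f(z_0)-\tfrac12\eta d\le f(x)$ for all $x$ in the $h$-grid cube $Q^*\ni\bar x$ (whose base point $y^*$ lies in $h\Z^d$).

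\emph{The witness and the conclusion.} Let $P$ be the quadratic with $D^2P=A^*$ agreeing with $P_{z_0}$ to first order at $z_0$, and set $w:=v-P$ on $Q^*$. Then $F(A^*+D^2w,x)=F(D^2v,x)=f(x)\ge G(A^*)$ on $Q^*$, so $w\in S(Q^*,F_{A^*}-G(A^*))$. Since $Q^*$ lies within $\sqrt d\,h$ of $z_0$ (inside the Taylor regime once $h$ is small), on $Q^*$ one has $w(x)=-(u-v)(x)+\mathrm{const}+e(x)$ with $\|e\|_{L^\infty(Q^*)}\le C(\eta_0+\eta+l)h^2$, which is $\ll Eh^2$ since $E\ge Ch^\kappa$, $\eta\sim h$, $l\le h$, and $\eta_0$ may be taken $\ll E$ by enlarging the threshold. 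Hence the convex envelope of $w$ over $Q^*$ differs from (a constant shift of) the convex envelope of $E-(u-v)$ over $U$ restricted to $Q^*$ by at most $\|e\|_{L^\infty}$; and since $w$ is a $\Pu^+_{1,\Lambda}$-supersolution (so \lref{mugood} controls its envelope), such an $L^\infty$ perturbation costs at most $\ll E^dh^d$ in subdifferential measure. Now the smearing pays off: because $V$ has positive measure, a definite fraction of the mass $cE^d$ of $\partial\Gamma_{E-(u-v)}(U)$ is carried by grid cubes that both meet $V$ and are mostly good, so the cube $Q^*$ may be chosen with $|\partial\Gamma_{E-(u-v)}(Q^*)|\ge cE^dh^d$. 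Combining, $|\partial\Gamma_w(Q^*)|\ge cE^dh^d$, hence $\mu(Q^*,F_{A^*}-G(A^*))\ge cE^d$, as required. At the end one fixes $\kappa$ small, depending only on $d$ and $\Lambda$, so that every threshold comparison above ($\|e\|_{L^\infty}\ll Eh^2$, bad set $\ll|V|$, $|A^*|\le h^{\kappa-1}$) holds for all $0<l\le h\le1$ whenever $E\ge CK_0h^\kappa$; the constant $c_0$ of \tref{full} traces back to this choice.

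\emph{The main obstacle.} The delicate point is the middle step: arranging that the three requirements hold \emph{at the same spot} — the convex envelope of $E-(u-v)$ carrying a definite proportion $\sim E^dh^d$ of its total subdifferential mass on a grid cube $Q^*$, a good $W^{2,\sigma}/W^{3,\sigma}$ point $z_0$ of $u$ lying in (or within $O(h)$ of) $Q^*$ with Hessian of size $\lesssim h^{\kappa-1}$, and the quadratic $P$ being close enough to $u$ throughout $Q^*$ that the witness $w=v-P$ still sees this mass — together with the bookkeeping required to choose $\delta$, $t$, $\eta$, $\eta_0$, and $\kappa$ so that all the error budgets are mutually compatible. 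This is precisely where both the regularity theory for constant-coefficient equations and the double-variable smearing of \lref{smear} are indispensable, and it is the technical heart of the proposition.
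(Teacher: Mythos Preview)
Your proposal has the right architecture --- normalize, smear via \lref{smear}, invoke $W^{2,\sigma}/W^{3,\sigma}$ regularity of $u$ to find a good Hessian $A$, snap to $A^*$, and build a witness $w=v-P$ --- and this matches the paper's approach up to the point where you try to extract the bound $\mu(Q^*,F_{A^*}-G(A^*))\ge cE^d$. There, however, the arguments diverge, and your route has a genuine gap.

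The gap is in your ``witness and conclusion'' paragraph. You want a single grid cube $Q^*$ that simultaneously (i) carries $|\partial\Gamma_{E-(u-v)}^U(Q^*)|\ge cE^dh^d$, and (ii) contains a point $z_0\in\pi_1(V)\cap P_t$ so that $P$ approximates $u$ well on $Q^*$. Your justification --- ``because $V$ has positive measure, a definite fraction of the mass\ldots is carried by grid cubes that both meet $V$ and are mostly good'' --- does not follow. The upper bound from \lref{mugood} tells you the subdifferential mass is carried by the contact set, with density at most $2^d$, but there is no \emph{lower} bound on the density, so the mass could be concentrated on a tiny subset of the contact set disjoint from $P_t$. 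Pigeonhole gives you a cube satisfying (i), and separately many cubes satisfying (ii), but not one satisfying both. Moreover, even if you had such a $Q^*$, transferring $|\partial\Gamma_{E-(u-v)}^U(Q^*)|$ to $|\partial\Gamma_w^{Q^*}(Q^*)|$ is not an $L^\infty$--stable operation in the way you suggest: the envelope over $Q^*$ and the restriction to $Q^*$ of the envelope over $U$ are different objects.

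The paper sidesteps all of this by never looking at $\partial\Gamma_{E-(u-v)}$. Instead, at a \emph{single} point $(x_1,y_1)\in Z$ with $x_1\in P_t$, it uses the cubic Taylor expansion of $u$ at $x_1$ (valid on all of $U$, not just a small ball --- this is the content of the $W^{3,\sigma}$ estimate as stated there) together with the doubled touching condition to deduce that $v$ lies above an explicit cubic polynomial at $y_1$. Defining $w$ as $v$ minus (the quadratic part, shifted by $h^\kappa I$) plus (the cubic correction), one gets the pointwise bound $w(y)\ge w(y_1)+c|y-y_1|^2$ for \emph{all} $y\in U$. This parabolic lower bound, combined with the $cE$ slack in the equation for $w$ (coming from the Step~1 replacement of $v$ by a solution of $F(D^2v)=f+cE$), feeds directly into \lref{muabp} to yield $\mu\ge cE^d$. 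No localization of global subdifferential mass is needed; the bump at $y_1$ does all the work.

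A secondary issue: you write the Taylor remainder as $\eta_0|x-z|^2$ on $B_{r_0}(z)$. What is actually needed (and what the paper uses) is a \emph{cubic} remainder $\tfrac16 t|x-z|^3$ valid globally on $U$; this is exactly what the $W^{3,\sigma}$ estimate provides, and it is what allows the touching argument to propagate from the single point $(x_1,y_1)$ to a statement about $w$ on all of $U$.
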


\begin{proof}
Throughout the proof, $C$ and $c$ denote positive constants which depend only on $d$, $\Lambda$, and $U$ but may be different in each instance.

{\em Step 1.}  We make several initial observations. First, we may assume without loss of generality that $U \subseteq B_1$ and $K_0=1$, using the same rescaling/normalizing argument as in \cref{convex}. Second, by comparing $v$ to the function $x\mapsto v(x) +\frac12 E(1-|x|^2)$ (or alternatively, using the ABP inequality), we may replace the equation for $v$ by 
\begin{equation}\label{e.ptfmstrict}
F(D^2v) = f + c E \quad \mbox{in} \ B_1. 
\end{equation}
Indeed, otherwise we replace $E$ by $\tfrac12E$ and $v$ by the solution of the Dirichlet problem for~\eqref{e.ptfmstrict} with the same boundary condition. Third, in view of the bound $K_0 \leq 1$ and the smoothness of $U$, the global H\"older estimates yield, for $\sigma(d,\Lambda) \in (0,1)$,
\begin{equation*}
\| u \|_{C^\sigma(\bar U)} + \| v \|_{C^\sigma(\bar U)} \leq C.
\end{equation*}
Since $u = v$ on $\partial U$, the triangle inequality gives, for every $x, y \in U$,
\begin{equation}\label{e.ptfmlips}
|u(x) - v(y)| \leq C \dist(\{x,y\},\partial U)^\sigma + C |x - y|^\sigma.
\end{equation}
For convenience we may take $0<\sigma \leq \tfrac12$.

{\em Step 2.}
We use Lemma~\ref{l.smear} to find a relatively large set on which $v$ touches $u$ from above, after tilting and translating the functions.

We consider the auxiliary function $\Phi:\overline U \times \overline U \times \R^d \times \R^d \to \R$ defined by
\begin{equation*}\label{}
\Phi(x,y,p,q):= u(x) - v(y) - \frac1{2\delta} |x-y|^2 - p\cdot x - q\cdot y,
\end{equation*} 
for some $\delta > 0$ to be determined.  Choose $x_0 \in U$ such that $\Phi(x_0,x_0,0,0) = E$. Set $r := \min \braces{ \tfrac{1}{8} E, 1 }$. Given $p, q \in B_r$, we compute
\begin{equation*}
\Phi(x_0,x_0,p,q) \geq \tfrac{3}{4} E
\end{equation*}
and estimate
\begin{equation*}
\begin{aligned}
\Phi(x,y,p,q) & = u(x) - v(y) - \frac{1}{2\delta}|x - y|^2 - p \cdot x -q\cdot y \\
& \leq C \dist(\{ x, y\},\partial U)^\sigma + C |x-y|^\sigma - \frac{1}{2 \delta} |x - y|^2 + 2r \\
& \leq C \dist(\{ x, y \}, \partial U)^\sigma + \tfrac{1}{4} E + \parens{C E^{-(2 - \sigma)/\sigma} - \frac{1}{2\delta}} |x - y|^2 + \tfrac14E\\
& \leq \tfrac{1}{2} E + C \dist( \{ x, y \}, \partial U)^\sigma,
\end{aligned}
\end{equation*}
where in the third line it was Young's inequality that gave us
\begin{equation*}\label{}
|x-y|^{\sigma} = E^{(2-\sigma)/2} \left( E^{-(2-\sigma)/\sigma}|x-y|^2 \right)^{\sigma/2} \leq \tfrac14E + CE^{-(2-\sigma)/\sigma}|x-y|^2
\end{equation*}
and to get the fourth line of the inequality string, we must impose the condition 
\begin{equation*}
\delta \leq c E^{(2-\sigma)/\sigma}.
\end{equation*}
Then we may fix $\delta:= c E^{(2-\sigma)/\sigma}$ so that, for all $p, q \in B_r$, the map $(x,y) \mapsto \Phi(x,y,p,q)$ attains its supremum in $U \times U$ on $\overline U_s \times \overline U_s$, where $s := c E^{1/\sigma}$. Here we have denoted
\begin{equation*}
U_s := \{ x \in U \,:\, \dist(x,\partial U) > s \}.
\end{equation*}
Let $Z$ be the set of points where such supremums are attained:
\begin{equation*}\label{}
Z:= \left\{ (x,y) \in U_s \times U_s\,:\, \exists \, (p,q) \in B_r \times B_r, \ \Phi(x,y,p,q) = \sup_{ U \times U} \Phi(\cdot,p,q) \right\}
\end{equation*}
and apply Lemma~\ref{l.smear} to conclude that
\begin{equation*}\label{}
|Z| \geq c \delta^{2d} r^{2d} \geq c\left( E^{(2-\sigma)/\sigma}\right)^{2d} E^{2d} = c E^{4d/\sigma}.
\end{equation*}
Let $\pi_1:\R^{d}\times \Rd \to \R^d$ be the projection onto the first $d$ variables, i.e., $\pi_1(x,y): = x$ for every $x,y\in\Rd$. Then we obtain
\begin{equation}\label{e.pluggies}
\left| \pi_1(Z) \right| \geq |U_s|^{-1} |Z| \geq |B_1|^{-1} |Z| \geq c E^{4d/\sigma}. 
\end{equation}
Finally, we note that, for every $(x,y) \in Z$, we can see from $\Phi(x,y,p,q) \geq 0$ for some $p,q\in B_{1}$  and $\sigma \leq \tfrac12$ that 
\begin{equation} \label{e.ptfmxy}
|x-y|^2 \leq C \delta \leq C E^{(2-\sigma)/\sigma} \leq CE^3. 
\end{equation}
 
\emph{Step 3.} We show that there are points $(x,y)\in Z$ such that $u$ has an appropriate quadratic expansion at $x$. Let $P_t$ be the set of points at which $u$ has a global quadratic expansion with both a quadratic term of size $t>0$ and a cubic error term of size $t>0$:
\begin{multline*}\label{}
P_t := \Big\{ x\in U \, : \, \exists \, (A,\xi) \in \Sy \times \Rd \ \mbox{such that} \ |A| \leq t \ \mbox{and, for all} \ z\in U, \\ 
 \left| u(z) - u(x) - \xi\cdot (z-x) - \tfrac12 (z-x) \cdot A(z-x) \right| \leq \tfrac16 t |z-x|^3 \Big\}.
\end{multline*}
According to the $W^{2,\sigma}$ and $W^{3,\sigma}$ estimates (see Remark~\ref{r.outsource} below), we have
\begin{equation*}\label{}
\left| U
 \setminus P_t  \right| \leq C t^{-\sigma},
\end{equation*}
where the exponent $\sigma > 0$ depends only on $d$ and $\Lambda$ (we may reuse the symbol $\sigma$ by taking the minimum of this $\sigma$ with the one from Step~1). In view of~\eqref{e.pluggies}, we have, for every $t\geq CE^{-4d/\sigma^2}$,
\begin{equation*}\label{}
\left| U \setminus P_t  \right| <  \left| \pi_1(Z) \right|.
\end{equation*}
We henceforth take $t\geq CE^{-4d/\sigma^2}$ to be a fixed constant, which will be selected below. In particular, we have $\pi_1(Z) \cap P_t \neq \emptyset$.

\emph{Step 4.}
We complete the proof by exhibiting $A^*$,~$y^*$ and~$Q^*$ as in the conclusion of the proposition. By the previous step, there exists~$(x_1,y_1) \in Z$ with~$x_1\in P_t$. Select~$p,q \in B_r$ such that
\begin{equation}\label{e.wrut1}
\Phi(x_1,y_1,p,q) = \sup_{x,y \in U}  \Phi(x,y,p,q)
\end{equation}
and $(A,\xi) \in \Sy\times \Rd$ such that $|A| \leq t$ and, for all $z\in U$,
\begin{equation}\label{e.wrut2}
\left| u(z) - u(x_1) - \xi\cdot (z-x_1) - \tfrac12 (z-x_1) \cdot A(z-x_1) \right| \leq \tfrac16t|z-x_1|^3.
\end{equation}
Note that $G(A) = f(x_1)$, since $u$ satisfies $G(D^2u) = f$ in $U$ and $u$ is touched from above and below at $x_1$ by cubic polynomials with Hessians equal to $A$ at $x_1$. Combining~\eqref{e.wrut1} and~\eqref{e.wrut2} gives 
\begin{multline}\label{e.dbltch}
\phi(x_1) - v(y_1) - \tfrac1{2\delta}|x_1-y_1|^2 - q\cdot y_1 \\ = \sup_{x,y\in U} \left( \phi(x) - v(y) - \tfrac1{2\delta}|x-y|^2 - q\cdot y \right).
\end{multline}
where $\phi$ is the cubic polynomial defined by
\begin{equation*}\label{}
\phi(z) := u(x_1) + (\xi-p) \cdot(z-x_1) + \tfrac12(z-x_1) \cdot A(z-x_1) - \tfrac16 t |z-x_1|^3.
\end{equation*}
Observe that, for each $y\in U$, we have
\begin{align*} 
\sup_{x \in U} \left( \phi(x) - \tfrac1{2\delta}|x-y|^2 \right) 
& \geq \phi(x_1+(y-y_1)) - \tfrac1{2\delta} |x_1-y_1|^2\\
& = \phi(x_1) - \tfrac1{2\delta}|x_1-y_1|^2+ (\xi-p) \cdot(y-y_1) \\ & \qquad + \tfrac12(y-y_1) \cdot A(y-y_1) - \tfrac16 t |y-y_1|^3.
\end{align*}
Inserting this into~\eqref{e.dbltch}, using $u(x_1)=\phi(x_1)$ and rearranging, we obtain
\begin{equation*}\label{}
v(y_1)  =  \inf_{y\in U} \Big( v(y) - (\xi-p-q) \cdot(y-y_1)- \tfrac12(y-y_1) \cdot A(y-y_1) + \tfrac16 t |y-y_1|^3 \Big).
\end{equation*}

Since $l \leq h$, we may select $A^*\in \Sy$ satisfying $A \leq A^* \leq A + Ch^\kappa I_d$ such that $l^{-1} A^*$ has integer entries. By ellipticity,~$G(A^*) \leq G(A)=f(x_1)$. Define
\begin{equation*}\label{}
w(y):= v(y) - (\xi-p-q) \cdot(y-y_1) - \tfrac12(y-y_1) \cdot (A-c_0E I_d)(y-y_1) + \tfrac16 t |y-y_1|^3,
\end{equation*}
with $c_0>0$ to be selected. In view of~\eqref{e.ptfmstrict}, we check that $w$ satisfies
\begin{align}\label{e.ptfmpert}
F\left(A^*+D^2w,x\right) & \geq F(D^2v,x) - Cc_0 E - Ct|x-y_1| \\ & \geq f(y_1) + cE -Cc_0E- C(t+1)|x-y_1|. \nonumber
\end{align}
The first inequality of~\eqref{e.ptfmpert} is a priori merely formal, but as usual we can obtain this in the viscosity sense (even more easily this time, since~$w$ is a smooth perturbation of~$v$). Taking $c_0$ small enough, we obtain
\begin{equation}\label{e.ptfmpert2}
F\left(A^*+D^2w,x\right)  \geq f(y_1)  \quad  \mbox{in} \ B_{cE/(t+1)}(y_1).
\end{equation}
Moreover, we have
\begin{equation}\label{e.ptfmbmp}
w(y_1) = \inf_{y\in U} \left(  w - cE |y-y_1|^2\right).
\end{equation}
We now select $y^*$ so that $h^{-1} y^* \in \Z^d$ and $|y_1-y^*| \leq \sqrt d h$.

We next check that, for appropriate choices of $E$ and $t$, we have 
\begin{equation}\label{e.inclQ}
Q^*:=y^* + 2 hQ_0 \subseteq U \cap B_{cE/t}(y_1).
\end{equation}
First, we note that $Q^* \subseteq U$ provided that $h\geq CE^{1/\sigma}$, since $y^*\in U_s$. For the second inclusion, we need to choose the parameters so that $cE/t \geq (1 + \sqrt{d})h$. We may satisfy this condition, as well as the requirement imposed in Step~2 that~$t\geq CE^{-4d/\sigma^2}$, by choosing~$t:=h^{\kappa-1}$ where~$\kappa:= (1+4d/\sigma^2)^{-1}$. Then all is well, provided that $E\geq Ch^\kappa$, as assumed in~\eqref{e.bigerr}. Moreover, using~\eqref{e.ptfmxy}, $E \geq C h^\kappa$ and $|y_1-y^*| \leq Ch \ll E$, we deduce that that the right side of~\eqref{e.ptfmpert2} is larger than $f(x_1) \geq G(A^*)$. In particular, $w\in \mathcal{S}(Q^*,F_{A^*} - G(A^*))$. Using this,~\eqref{e.ptfmbmp} and~\eqref{e.inclQ}, an application of Lemma~\ref{l.muabp} yields
\begin{align*}\label{}
\mu(Q^*,F_{A^*} - G(A^*))^{1/d}
& \geq 
c h^{-2} \left( \inf_{\partial Q^*} w - w(y_1) \right) 
\geq 
cE.
\end{align*}
This completes the proof.
\end{proof}

\begin{remark}
\label{r.outsource}
In the proof of Proposition~6.2 above, we used the $W^{2,\sigma}$ and $W^{3,\sigma}$ estimates for a solution of $F(D^2u) = f$, with $f$ Lipschitz. These estimates are essentially contained in~\cite{CC}, and more precise statements we need can be found for example in \cite{ASS}. However, the estimates from~\cite{ASS} (Proposition~3.1 and Lemma~5.2 of that paper) are stated terms of solutions of $\Pu^+_{1,\Lambda}(D^2u) \geq 0$ and $F(D^2u) = 0$, so the hypotheses do not quite fit. 

Here is why the arguments of~\cite{ASS} generalize without any difficulty to our case, giving us what we need: 

\begin{itemize}

\item By replacing $u$ by the sum of $u$ and a parabola in the statement of \cite[Proposition 3.1]{ASS}, the $W^{2,\sigma}$ estimates can be easily formulated in terms of solutions of $\Pu^+_{1,\Lambda}(D^2u) \geq -1$. This applies in particular to solutions of $F(D^2u) = f$, with $f$ bounded. 

\item In the proof of~\cite[Lemma~5.2]{ASS}, one differentiates the equation $F(D^2u) = 0$ to obtain that, for any unit vector $e\in\partial B_1$, the function $v:=\partial_e u$ satisfies
\begin{equation*}\label{}
\Pu^-_{1,\Lambda}(D^2v ) \leq 0 \leq \Pu^-_{1,\Lambda}(D^2v).
\end{equation*}
If instead $u$ solves $F(D^2u) =f$ with $f$ Lipschitz, the same calculation gives  
\begin{equation*}\label{}
\Pu^-_{1,\Lambda}(D^2v) -K \leq 0 \leq \Pu^-_{1,\Lambda}(D^2v) +K,
\end{equation*}
where $K$ is the Lipschitz constant of $f$. The proof then proceeds as before, using the form of the $W^{2,\sigma}$ from the first step. 
\end{itemize}

\end{remark}

We now present the final piece of the argument of the main result. What remains is to combine Proposition~\ref{p.snapgrid} and Theorem~\ref{t.mudecay}, which is fairly straightforward but involves juggling some constants and careful bookkeeping. 

\begin{proof}[{\bf Proof of Theorem~\ref{t.full}}]
Fix $p\in (0,d)$. Denote $q:= (p+2d)/3$ and $q':=(2p+d)/3$ so that $p< q'<q<d$, and take $\alpha (q,d,\Lambda)$ to be as in the statement of Corollary~\ref{c.mudeviation} and $\kappa(d,\Lambda)$ to be the exponent in Proposition~\ref{p.snapgrid}. By scaling (as in the proof of Corollary~\ref{c.convex}), we may assume without loss of generality that $U \subseteq B_1$ and 
\begin{equation*} \label{}
K_0 + \| g \|_{C^{0,1}(\partial U)} + \| f \|_{C^{0,1}(U)} \leq 1.
\end{equation*}
As usual, $C$ and $c$ denote positive constants which depend on $d$, $\Lambda$, $U$ and $p$ and may differ in each occurrence.

\smallskip

We present only the proof that, for some $\beta(p,d,\Lambda)>0$,
\begin{equation}\label{e.wtf}
\P\left[ \sup_{x\in U} \left( u(x) - u^\ep(x,F) \right) \geq C \ep^{\beta} \right] \leq C\exp\left(-\ep^{-p} \right),
\end{equation}
that is, the lower bound for~$u^\ep - u$. The proof of the upper bound for~$u^\ep-u$ is then immediately obtained by applying this result to the pushforward of $\P$ under the map $F\mapsto F_*$ (or by replacing $F$ by $F_*$ and repeating the argument). The fact that $\beta(p,d,\Lambda) \geq c(d,\Lambda)(d-p)$ will be implicit in the argument. 

\smallskip

Fix $\ep\in(0,1)$. Let $m\in\N$ be the smallest positive integer such that
\begin{equation*} \label{}
\max\left\{ 3^{-m(1+\alpha/2d)}, 3^{-mq'/p}\right\} \leq \ep.
\end{equation*}
Also set $h:= 3^{m}\ep$ and $l := 3^{-m\alpha/2d}$. Note that $l\leq h\leq \ep^\gamma$ for some $\gamma(d,\Lambda,p)>0$. 
Applying Proposition~\ref{p.snapgrid} with $G=\overline F$ and $F^\ep(A,x):=F(A,\frac x\ep)$ in place of $F$, we obtain, for every fixed $E \geq Ch^\kappa$,
\begin{multline} \label{e.captcha}
\left\{ F\in \Omega\,:\, \sup_{x\in U}\left( u(x) - u^\ep(x,F) \right) \geq E \right\} \\ \subseteq \bigcup_{(A,y) \in \mathcal I(h) } \left\{ F\in \Omega\,:\, \mu(\ep^{-1}y+Q_{m},F_{A}-\overline F(A)) \geq c E^d\right\},
\end{multline}
where
\begin{multline*}\label{}
\mathcal I(h) := \Big\{ (A,y) \in \Sy \times B_1 \,:\, |A| \leq h^{\kappa-1}, \ \mbox{and both} \ l^{-1} A \ \mbox{and} \ h^{-1}y \\ \mbox{have integer entries} \Big\} .
\end{multline*}
We deduce that 
\begin{equation}
\label{e.captcha2}
 \sup_{x\in U}\left( u(x) - u^\ep(x,F) \right)_+^d  
 \leq C h^{\kappa d} + C \mathcal Y_m,
\end{equation}
where $\mathcal Y_m$ is the random variable
\begin{multline} \label{e.defYm}
\mathcal Y_m:= \sup\big\{ \mu\left(z+Q_{m},F_A - \overline F(A) \right) \,:\, z\in \Zd \cap B_{3^{m(1+\alpha/2d)}}, \\
 3^{m\alpha/2d}A \in \mathbb{S}^d\cap \Z^{d\times d} \cap B_{3^{m\alpha/d}}\big\}.
\end{multline}
Applying Corollary~\ref{c.mudeviation} to each $F_A$, in view of the definition of $\overline F(A)$ in~\eqref{e.Fbar}, we deduce that
\begin{equation*}\label{}
\P\left[ \mu(Q_{m},F_{A}-\overline F(A)) \geq (1+|A|)^d 3^{-m\alpha} t \right]\leq C\exp\left( -c3^{mq}t  \right).
\end{equation*}
A union bound (there are $C3^{m(d+\alpha/2)}\cdot 3^{m\alpha(d+1)/2}$ many elements in the supremum in~\eqref{e.defYm}), using also that $|A|^d\leq 3^{m\alpha/2}$ for every $A$ in the supremum in~\eqref{e.defYm}, then yields, for all $t\geq 1$,
\begin{equation*} \label{}
\P \left[  \mathcal Y_m \geq 3^{-m\alpha/2} t \right] \leq C 3^{m(d+\alpha/2)+m\alpha(d+1)/2} \exp\left(-c3^{mq}t \right).
\end{equation*}
Replacing $t$ with $1+t$, we deduce that, for every $t>0$,
\begin{equation*} \label{}
\P \left[  3^{m\alpha/2} \mathcal Y_m -1 \geq t \right] \leq C \exp\left(Cm-c3^{mq}(1+t) \right) \leq C \exp\left( -c3^{mq}t\right).
\end{equation*}
Now replace $t$ by $3^{-mq'}t$ to obtain, for every $t>0$,
\begin{equation*} \label{}
\P \left[ 3^{mq'}\left( 3^{m\alpha/2}\mathcal Y_m -1 \right)_+ \geq t \right] \leq C \exp\left(-c3^{m(q-q')} t  \right).
\end{equation*}
A union bound yields, for every $t\geq 1$,
\begin{equation} \label{e.crabcha}
\P \left[ \sup_{n\in\N}  3^{nq'}\left( 3^{n\alpha/2}\mathcal Y_n -1 \right)_+ \geq t \right] \leq C \sum_{n\in\N} \exp\left(-c3^{n(q-q')} t  \right) \leq C\exp\left( -ct \right).
\end{equation}
Define
\begin{equation*} \label{}
\mathcal X:= c\sup_{n\in\N}  3^{nq'} \left( 3^{n\alpha/2}\mathcal Y_n - 1 \right)_+
\end{equation*}
where $c>0$ is taken small enough that an integration of~\eqref{e.crabcha} yields 
\begin{equation*} \label{}
\E \left[ \exp\left( \mathcal X \right) \right] \leq C.
\end{equation*}
Returning to~\eqref{e.captcha2}, we get
\begin{equation*} \label{}
\sup_{x\in U}\left( u(x) - u^\ep(x,F) \right)_+^d  
 \leq C h^{\kappa d} +C \left(  3^{-mq'}\mathcal X +1 \right) 3^{-m\alpha/2}.
\end{equation*}
Using the definitions of~$h$ and $m$, we obtain, for some $\beta(p,d,\Lambda)>0$,
\begin{equation*} \label{}
\sup_{x\in U}\left( u(x) - u^\ep(x,F) \right)
 \leq C \left( 1+\mathcal X\ep^p \right) \ep^{\beta}.
\end{equation*}
Chebyshev's inequality now yields~\eqref{e.wtf}.
\end{proof}

\begin{remark}
The argument above gave a stronger result than the one stated in Theorem~\ref{t.full}. What we proved is that, for each $p\in(0,d)$, there exists $\beta(p,d,\Lambda) > 0$ and a nonnegative random variable $\mathcal X$ on $(\Omega,\F)$ satisfying
\begin{equation*} \label{}
\E \left[ \exp(\mathcal X) \right] \leq C(d,\Lambda,p,U)<\infty
\end{equation*}
and
\begin{equation} \label{e.aprioriform}
\sup_{x\in U} \left| u^\ep(x) - u(x) \right| \leq C \left( 1+\mathcal X\ep^p \right) \ep^{\beta} \left( K_0+\| g\|_{C^{0,1}(\partial U)} + \| f \|_{C^{0,1}(U)} \right). 
\end{equation}
This is stronger than Theorem~\ref{t.full} since the latter may be immediately recovered from~\eqref{e.aprioriform} and Chebyshev's inequality, but it also gives an error estimate independent of the data ($\mathcal X$ depends on the realization of the coefficients, but not, for example, on $\ep$, $g$ or $f$). 
\end{remark}

\section{Further remarks and some open problems}
\label{s.remarks}

We conclude with a discussion of generalizations and extensions of our results as well as some open problems. 
\subsection{Computing the effective coefficients}

One way of characterizing $\overline F(A)$ is to consider, for $\delta > 0$, the \emph{approximate cell problem}
\begin{equation*}\label{}
\delta w^\delta + F(A+D^2w^\delta,y) = 0 \quad \mbox{in} \ \Rd.
\end{equation*}
This has a unique stationary solution $w^\delta=w^\delta(\cdot,F,A) \in C^{0,1}(\Rd)$, which may be computed numerically using available (albeit slow) computational methods. The effective coefficients are given by the limit
\begin{equation*}\label{}
\lim_{\delta \to 0} \big| \delta w^\delta(0,F,A) + \overline F(A) \big| = 0. 
\end{equation*}
Using a comparison argument, Theorem~\ref{t.full} yields the following estimate for the previous limit, for $p<d$, $\alpha(p,d,\Lambda)$ as in the statement of the theorem and $C=C(d,\Lambda,K_0,|A|)$:
\begin{equation*}\label{}
\P \Big[ \big| \delta w^\delta(0,F,A) + \overline F(A) \big| > C\delta^\alpha \Big] \leq C \exp\left(-\delta^{-p} \right).
\end{equation*}
We leave the details to the reader. 

\subsection{What is the optimal exponent?} Theorem~\ref{t.full} is not the final word on the quantitative study of the stochastic homogenization of~\eqref{e.pde}. Now that an algebraic rate has been obtained, determining the best exponent~$\alpha$ in Theorem~\ref{t.full} is, in our opinion, the most important remaining task. This is beyond the reach of our current methods and, we expect, quite difficult.

In recent and striking papers, Gloria and Otto~\cite{GO1} and Gloria, Neukamm and Otto~\cite{GNO} proved \emph{optimal} error estimates for discrete elliptic equations in divergence form with i.i.d. coefficients, using a combination of regularity theory and concentration arguments. This suggests that it may be possible to develop an analogous theory for equations in nondivergence form, at least in the linear case. 

\smallskip

Short of finding the optimal $\alpha$ explicitly, it would still be interesting to further constrain it.  For example, can we replace the dependence of $\alpha$ on the ellipticity of $F$ with the ellipticity of $\overline F$?

\begin{question}
Can we show that the exponent $\alpha$ in Theorem~\ref{t.full} depends only on $d$ and $\overline \Lambda$, where $\overline\Lambda$ is the ellipticity of $\overline F$? If so, then in the linear case we would deduce that $\alpha$ depends only~$d$, as any constant-coefficient linear operator is, up to a change of variables, the Laplacian. 
\end{question} 

\subsection{Mixing conditions}
\label{mixing}

With small modifications, the arguments in this paper give appropriate quantitative error estimates under other hypotheses quantifying ergodicity. In this subsection, we explain the simple modifications needed to obtain results under a \emph{uniform mixing condition}, which is the most natural generalization of the finite range of dependence assumption. The arguments can also be modified to yield results under a \emph{strong} mixing condition (a weaker condition than uniform mixing); we leave the latter to the reader. For a discussion of mixing conditions, see~\cite[Chapter 17]{ILK}.

\begin{definition}
Let $\rho:(0,\infty) \to [0,\infty)$ be nonnegative, continuous and  decreasing with $\lim_{t\to \infty} \rho(t) = 0$. We say that a probability measure $\P$ on $(\Omega,\F)$ satisfies the \emph{uniform mixing condition with rate $\rho$} if, for every $U,V \subseteq \Rd$ and random variables $X$ and $Y$ such that $X$ is $\F(U)$--measurable and $Y$ is $\F(V)$--measurable, we have 
\begin{equation} \label{e.mixing}
\left| \cov\left[X;Y\right] \right| \leq \rho\left( \dist(U,V) \right) \var\left[X\right]^{1/2} \var \left[ Y \right]^{1/2}.
\end{equation}
\end{definition}

The arguments in this paper show that environments with a uniform mixing rate of $\rho$ have error estimates which are proportional to $\rho$, up to an algebraic rate of decay. We present the following analogue of Theorem~\ref{t.full} for environments satisfying a uniform mixing condition with an algebraic rate. The formulation of results for slower mixing rates (such as logarithmic rates) are left to the reader. 

\begin{theorem}
\label{t.mixing}
Suppose $\Lambda > 1$, $K_0>0$ and $\P$ is a probability measure on~$(\Omega(\Lambda),\F)$ satisfying~\eqref{e.Fubb} and \eqref{e.stat}. Suppose also that $\P$ satisfies the uniform mixing condition with rate $\rho(t) = At^{-\beta}$, for constants $A,\beta >0$. Then, if $0< \ep \leq 1$, $U \subseteq \R^d$ is a bounded smooth domain, $g \in C^{0,1}(\partial U)$, $f \in C^{0,1}(U)$, $u^\ep(\cdot, F) \in C(\bar U)$ denotes the unique solution of \eref{cswuep}, and $u \in C(\bar U)$ denotes the unique solution of \eref{cswu}, we have the estimate
\begin{equation*}
\P\bracks{ \sup_{x \in U} |u^\ep(x,F) - u(x)| \geq C \ep^\alpha} \leq C\ep^{\alpha},
\end{equation*}
where the exponent $\alpha > 0$ depends only on $d$, $\Lambda$ and $\beta$ and $C > 0$ depends only on $d$, $\Lambda$, $\beta$, $K_0$, $A$, $U$, $\| g \|_{C^{0,1}(\partial U)}$, and $\| f \|_{C^{0,1}(U)}$.
\end{theorem}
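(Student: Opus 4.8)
The plan is to follow the proof of \tref{full} essentially line by line, after isolating the two places where the finite range of dependence \eref{frd} is actually used: namely \lref{concentrate}, through the vanishing \eref{usingfrd} of covariances between $\mu$ on far-apart cubes, and the proof of \cref{mudeviation}, through the independence \eref{expdist} used in the exponential-moment factorization. Every other ingredient---all of Sections~\ref{S.muintro}--\ref{sec.SC}, \lref{contract}, the construction of $\overline F$ via $s(\P)$ in \lref{mubalance} and \lref{pushup}, and the deterministic \pref{snapgrid} with its scalings---uses only \eref{Fubb} and \eref{stat} (or nothing probabilistic at all), so it applies verbatim under the hypotheses of \tref{mixing}. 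Thus it suffices to supply uniform-mixing substitutes for those two steps and then reassemble.

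For \lref{concentrate}: under \eref{mixing} with $\rho(t)=At^{-\beta}$, two level-$m$ triadic cubes whose centers differ by $\ell$ cells in sup-norm lie at distance $(\ell-1)_+3^m$, so \eref{mumeas}, \eref{mixing} and H\"older give $\left|\cov\bracks{\mu(Q_m(x),F);\mu(Q_m(y),F)}\right|\le C\ell^{-\beta}3^{-m\beta}\var\bracks{\mu(Q_m,F)}$ for $\ell\ge2$, while the $O(1)$ adjacent cubes are bounded trivially. Summing over the $3^{dn}$ subcubes of $Q_{m+n}$ and using $\sum_{\ell}\ell^{d-1-\beta}\le C(1+n)(3^n)^{(d-\beta)_+}$, the bound $\var\bracks{\fint_{Q_{m+n}}\mu(Q_m(x),F)\,dx}\le C3^{-dn}\var\bracks{\mu(Q_m,F)}$ is replaced by $C(1+n)3^{-n\min(d,\beta)}\var\bracks{\mu(Q_m,F)}$. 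Feeding this through the Young's-inequality step exactly as in the original proof yields \eref{concentrate} with $3^{-nd/2}$ replaced by a quantity still tending to $0$ as $n\to\infty$; hence the proof of \tref{mudecay} goes through word for word (now taking $n_1$ large depending also on $\beta$), giving \eref{muvariance} with $\tau\in(0,1)$ depending on $d,\Lambda,\beta$.

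Replacing \cref{mudeviation} is the step I expect to be the main obstacle, since $\E\bracks{\prod_j\exp(t\mu(Q_n^{j,i},F))}$ no longer factorizes: the $\mu(Q_n^{j,i},F)$ are only weakly dependent across $j$. Because \tref{mixing} asks only for a polynomial final tail, I would replace the exponential-moment estimate by a high-moment one. Fix a large $p=p(d,\Lambda,\beta)$, write $\mu(Q_{m+n+1},F)\le 3^{-d(m+1)}\sum_{i,j}\mu(Q_n^{j,i},F)$ via \eref{musub}, peel off the $3^d$-term sum over $i$ by the triangle inequality in $L^{2p}$, and for each fixed $i$ bound the $2p$-th central moment of $3^{-dm}\sum_j\mu(Q_n^{j,i},F)$ by a Rosenthal-type inequality for maximally-correlated ($\rho$-mixing) bounded sequences---this follows from \eref{mixing} by the standard expansion, using $\|\mu(Q_n,\cdot)\|_\infty\le(2K_0)^d$ from \lref{mubound} and $\E\bracks{\mu(Q_n,F-s)^2}\le CK_0^{2d}\tau^n$ from \tref{mudecay}. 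Since the $3^{dm}$ cubes $\{Q_n^{j,i}\}_j$ sit on a grid with pairwise distance at least $3^n$ times their grid distance, this yields a bound $C_pK_0^{2dp}\,3^{-\gamma mp}\,\tau^{np}$ with $\gamma:=\min(d,\beta)>0$. Taking $n=m$, combining with $\E\bracks{\mu(Q_n,F-s)}\le CK_0^d\tau^{n/2}$, and applying Chebyshev at level $K_0^d3^{-M\alpha_0}$ (with $M:=2m+1$ and a small $\alpha_0=\alpha_0(d,\Lambda,\beta)<\gamma/4$) gives, after relabeling, $\P\bracks{\mu(Q_M,F-s)\ge CK_0^d\,3^{-M\alpha_0}}\le C_p\,3^{-Mp_0}$, where $p_0=p_0(p,d,\Lambda,\beta)$ can be made as large as we wish by taking $p$ large---this is what the exponential concentration provided in the finite-range case---and likewise for $F_*+s$.

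Finally I would reassemble as in the proof of \tref{full}. \pref{snapgrid} is purely deterministic, so it reduces the estimate, for $m\sim\log_3(1/\ep)$, to controlling $\P\bracks{\mathcal Y_m\ge\lambda}$ for a maximum over at most $C3^{C_1m}$ events of the form $\{\mu(Q^*,F_{A^*}-\overline F(A^*))\ge cE^d\}$, where $C_1=C_1(d,\Lambda)$ is the union-bound exponent (as in \eref{captcha}--\eref{captcha2}). Choosing $p$ in the previous step so large that $p_0>C_1+1$, the union bound and that estimate give $\P\bracks{\mathcal Y_m\ge CK_0^d3^{-m\alpha_0}}\le C3^{-m(p_0-C_1)}\le C\ep^{p_0-C_1}$; inserting this into \eref{captcha2} and tracking $h,l\le\ep^{\gamma_0}$ exactly as in the original proof produces an exponent $\alpha=\alpha(d,\Lambda,\beta)>0$ and the bound $\P\bracks{\sup_{x\in U}(u-u^\ep)\ge C\ep^\alpha}\le C\ep^\alpha$, with $C$ depending in addition on $K_0$, $A$, $U$, $\|g\|_{C^{0,1}(\partial U)}$, $\|f\|_{C^{0,1}(U)}$. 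Applying the same argument to the pushforward of $\P$ under $F\mapsto F_*$ gives the matching estimate for $u^\ep-u$, completing the proof.
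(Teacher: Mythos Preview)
Your modification of \lref{concentrate} is essentially the paper's: the paper assumes without loss of generality $\beta<d$ and obtains \eref{concentrate} with $3^{-nd/2}$ replaced by $3^{-n\beta/2}$, and then runs the proof of \tref{mudecay} verbatim with $\tau=\tau(d,\Lambda,\beta)$. So that part matches.

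The real divergence is in what you do with \cref{mudeviation}. The paper does \emph{not} try to salvage any analogue of it; it explicitly notes that the concentration argument relies in an essential way on independence and then dispenses with it entirely. Instead, the paper goes directly from the second-moment bound \eref{muvariance} (now with $\tau(d,\Lambda,\beta)$) and Chebyshev to \pref{snapgrid}. The only adjustment is that, because the tail on each $\mu(Q_m,F_A-\overline F(A))$ is now merely algebraic, the parameter $h=3^m\ep$ (equivalently $m$) must be taken as a sufficiently small power of $\ep$ so that $\tau^m$ beats the polynomial size of the union over $\mathcal I(h)$; this costs only in the size of the final exponent $\alpha$, which is all the statement asks for.

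Your route---replacing the exponential-moment factorization by a Rosenthal-type $2p$-th moment bound for $\rho$-mixing arrays so that $p_0$ can be pushed above the union-bound exponent $C_1$---is a genuinely different and more elaborate strategy. It is plausible and would in principle yield a better $\alpha$, but it rests on a moment inequality for $\rho$-mixing random fields on $\Z^d$ that you invoke only by name (``the standard expansion''); the precise form $C_pK_0^{2dp}3^{-\gamma mp}\tau^{np}$ you claim would need justification, and the constants $C_p$ must be tracked to ensure they do not swamp the gain. Since the theorem only asks for \emph{some} $\alpha(d,\Lambda,\beta)>0$, the paper's simpler Chebyshev-only argument suffices and avoids this extra machinery.
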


We continue with the modifications to the paper required to prove Theorem~\ref{t.mixing}. We assume without loss of generality that $\beta < d$.

\begin{itemize}

\item The only use of the finite range of dependence condition in the proof of Theorem~\ref{t.mudecay} is found in the proof of Lemma~\ref{l.concentrate}, precisely, in the bound~\eqref{e.usingfrd}. Rather than~\eqref{e.usingfrd}, the uniform mixing condition gives
\begin{equation*} \label{}
\qquad \left| \cov\left[\mu(Q_m(x),F) ;\mu(Q_m(y),F) \right] \right| \leq \rho\left( \dist(Q_m(x),Q_m(y)) \right) \var \left[ \mu(Q_,F)\right].
\end{equation*}
Using this bound in place of~\eqref{e.usingfrd}, we find, after a computation, that the right of~\eqref{e.varcov} is estimated from above by 
\begin{equation*} \label{}
\qquad C 3^{-\beta n} \var \left[ \mu(Q_n,F)\right].
\end{equation*}
This leads to the bound
\begin{equation*}
\qquad \E \bracks{\mu(Q_{m+n}(x),F)^2} \leq C (1 + \delta^{-1}) 3^{-\beta n} \var\bracks{\mu(Q_m,F)} + (1 + \delta) \E\bracks{\mu(Q_m,F)}^2
\end{equation*}
and we take  $\delta := 3^{-n\beta/2}$ to obtain the following result in place of~\eqref{e.concentrate}:
\begin{equation}
\label{e.concentrate-mix}
\qquad \E \bracks{ \mu(Q_{m+n},F)^2 } \leq  \E \bracks{ \mu(Q_m,F) }^2 + C 3^{-n\beta/2} \E \bracks{ \mu(Q_m,F)^2 }.
\end{equation}

\item The rest of the proof of Theorem~\ref{t.mudecay} proceeds essentially verbatim, and we obtain the statement of theorem for $\tau(d,\Lambda,\beta)\in (0,1)$ and $C(d,\Lambda,\beta,A)>0$.

\item The proof of Corollary~\ref{c.mudeviation} is a concentration argument that relies in an essential way on independence, so we cannot obtain an analogue of it.

\item To obtain Theorem~\ref{t.mixing}, we combine Proposition~\ref{p.snapgrid} with the extension of Theorem~\ref{t.mudecay} obtained above. This is similar to the proof of Theorem~\ref{t.full}, but requires slightly more care when selecting the parameter $m$, because we have only algebraic rather than exponential bounds for the probabilities. The necessary modifications are left to the reader.
\end{itemize}

\subsection{Further extensions and generalizations}

While Theorems~\ref{t.csw} and~\ref{t.full} are stated in terms of solutions to the Poisson-Dirichlet problem on bounded domains, deterministic comparison arguments give us analogous results for essentially any well-posed problem involving the operator $F$. The only issue is in adapting the proof of Proposition~6.2, which is straightforward. As such, we can obtain results for Neumann boundary conditions as well as time-dependent parabolic problems with appropriate initial conditions (e.g., the Cauchy problem) and/or boundary conditions (e.g., the Cauchy-Dirichlet problem). 

\smallskip

Similarly, the methods in this paper readily extend to the case of equations with lower-order terms, such as:
\begin{equation*}\label{}
F\left(D^2u,Du,u,x,\frac x\ep\right) = 0. 
\end{equation*}
Here we are thinking of equations with the ``usual" hypotheses, i.e., uniform ellipticity and Lipschitz continuity in each argument. Again, the only extra difficulty in this extension lies in obtaining a more general version of Proposition~\ref{p.snapgrid}. In other words, the most difficult part of the qualitative homogenization program, proving Theorem~\ref{t.mudecay}, goes through verbatim and the only remaining issue is in the deterministic link between Theorems~\ref{t.mudecay} and~\ref{t.full}.

We conclude with an open problem which is not as straightforward:

\begin{question}
Can the ideas in this paper be extended to the parabolic equations with time-dependent, random coefficients? The prototypical equation is
\begin{equation*}\label{}
u_t + F\left(D^2u,\frac x\ep, \frac t{\ep^2} \right) = 0,
\end{equation*}
where $F:\Sy\times \Rd \times \R \to \R$ and the underlying probability measure on equations is ergodic with respect to space-time shifts. What is the natural analogue of $\mu$?
\end{question}

\noindent{\bf Acknowledgements.}
S. Armstrong was partially supported by the Forschungsinstitut f\"ur Mathematik (FIM) of ETH Z\"urich during Spring 2013.  C. Smart was partially supported by NSF grant DMS-1004595. We thank Yves Capdeboscq and Xiaoqin Guo for pointing out some minor mistakes in the published version of this article that we have fixed in this revision.

\bibliographystyle{plain}
\bibliography{algebraic}

\end{document}